\renewcommand{\hat}{\widehat} 
\newcommand{\im}{\mathop{{\rm im}}\nolimits}
\newcommand{\Hom}{\mathop{{\rm Hom}}\nolimits}
\def\R{\mathbb{R}}\def\V{\mathbb{V}}
\def\W{\mathbb{W}}
\def\ker{{\rm ker}}
\def\im{{\rm im}}
\def\U{\mathbb{U}}
\def\K{\mathbb{K}}
      \def\Hom{{\rm Hom}}
  \def\Sq{{\rm Sq}} \def\End{{\rm End}}
\newtheorem{theo}{Theorem}[section]
\newtheorem{Defi}[theo]{{\bf Definition}}
\newenvironment{defi}{\begin{Defi} \normalfont}{\end{Defi}}
\newtheorem{Prop}[theo]{{\bf Proposition}}
\newenvironment{prop}{\begin{Prop} \normalfont}{\end{Prop}}
\newtheorem{Cor}[theo]{{\bf Corollary}}
\newenvironment{cor}{\begin{Cor} \normalfont}{\end{Cor}}
\newtheorem{Lem}[theo]{{\bf Lemma}}
\newenvironment{lem}{\begin{Lem} \normalfont}{\end{Lem}}
\newtheorem{pro}[theo]{Problem}
\newtheorem{Exa}[theo]{{\bf Example}}
\newenvironment{exa}{\begin{Exa} \normalfont}{\end{Exa}}
\newtheorem{Exem}{{\bf Example}}[section]
\newtheorem{Rem}[theo]{{\bf Remark}}
\newenvironment{rem}{\begin{Rem} \normalfont}{\end{Rem}}
\newenvironment{prf}{\begin{proof}}{\end{proof}}
\begin{document}
\title{Enhanced Leibniz Algebras: \\Structure Theorem and Induced Lie 2-Algebra} 

\author{Thomas Strobl\\
Universit\'e Lyon 1
\thanks{strobl@math.univ-lyon1.fr}
\and Friedrich Wagemann\\
     Universit\'{e} de Nantes
     \thanks{wagemann@math.univ-nantes.fr}}   
\maketitle

\begin{abstract}
An enhanced Leibniz algebra is an algebraic struture that arises in the context of particular higher gauge theories describing self-interacting gerbes. It consists of a Leibniz algebra $(\V,[ \cdot, \cdot ])$, a bilinear form on $\V$ with values in another vector space $\W$, and a map $t \colon \W \to \V$, satisfying altogether four compatibility relations. Our structure theorem asserts that an enhanced Leibniz algebra is uniquely determined by the underlying Leibniz algebra $(\V,[ \cdot, \cdot ])$, an appropriate abelian ideal ${\mathfrak i}$ inside it, as well as a cohomology 2-class $[\Delta]$ which only effects the $\W$-valued product. 

Positive quadratic enhanced Leibniz algebras, as needed for the definition of a Yang-Mills type action functional, turn out to be rather restrictive on the  underlying Leibniz algebra $(\V,[ \cdot, \dot ])$: $\V$ has to be the hemisemidirect product of a positive quadratic Lie algebra  ${\mathfrak g}$  with a   ${\mathfrak g}$-module ${\mathfrak i}$, $\V \cong {\mathfrak g}\ltimes{\mathfrak i}$, with  ${\mathfrak i}$ the above-mentioned ideal in this case.

The second main result of this article is the construction of a functor from the category of such enhanced Leibniz algebras to the category of (semi-strict) Lie 2-algebras  or, equivalentely, of  two-term $L_{\infty}$-algebras. 
\end{abstract}

\section*{Introduction---Motivation}

Enhanced Leibniz algebras (ELAs) are a particular algebraic structure that arose in the context of non-abelian gauge theories for 1- and 2-form gauge fields, i.e.\ in the context of non-abelian gerbes \cite{Str}. While every Leibniz algebra $(\V, [ \cdot , \cdot ])$---a magma where the left multiplication is a derivation, $[v_1,[v_2,v_3]]=[[v_1,v_2],v_3] + [v_2,[v_1,v_3]]$---gives rise to such a structure, ELAs are more general. To find out how much more general they are is part of the goal for this article. Another main interest into ELAs arises from the fact that they give rise to particular Lie 2-algebras. This last aspect is in fact again closely related to the origin of this new notion in the physics of gauge theories, which therefore seems adequate to recall here as a motivation for both, the notion of an ELA itself as well as its relation to the Lie 2-algebra (equivalently, 2-term $L_\infty$-algebra).

Standard non-abelian gauge theories, as they are known to describe elementary interaction fields, are governed by the Yang-Mills functional. This is a functional defined on the connections of a principal bundle $P$ over the spacetime manifold $M$. For its definition, one needs a quadratic Lie algebra $({\mathbb{g}},[ \cdot , \cdot ],\kappa_\mathbb{g})$. In the case of a trivial bundle, the connections are parametrized by ${\mathbb{g}}$-valued 1-forms, $A\in \Omega^1(M,\mathbb{g})$, and the functional takes the  form 
\begin{equation} S_{YM}[A]= \int_M \kappa_\mathbb{g}(F_A \stackrel{\wedge}{,} \star F_A) \, .
\label{YM}
\end{equation}
Here $F_A = \mathrm{d}_M A + \tfrac{1}{2} [A\stackrel{\wedge}{,}A]$ is the curvature 2-form of the connection and $\star$ denotes the Hodge duality operation defined on $M$. Gauge symmetries are invariances of the functional \eqref{YM} corresponding to vertical automorphisms of the principal bundle under which the curvatures $F_A$ transform in the adjoint representation. The physics of such theories imply in addition that the invariant symmetric bilinear form  is even positive, i.e.\ $\kappa_\mathbb{g}$ defines an invariant scalar product on $\mathbb{g}$.

Electromagnetism is described by the above for the case that $P$ is a $U(1)$-bundle. Here locally $A$ is just an $\R$-valued 1-form on $M$. A gerbe is the analog of this, but defined for an $\R$-valued 2-form $\beta$ on $M$. In the case of several 1-form gauge fields $A$, self-interactions arise from the terms quadratic in $A$ in the 2-form curvature $F_A$, where consistency of the theory leads one have those governed by a Lie algebra. The analog of this for several 2-form gauge fields $\beta$ is not evident, since the curvature of a 2-form gauge field $\beta$ has to be a 3-form, it starts with $\mathrm{d} B$ and there are no terms non-linear in $\beta$ that one can add to this for form-degree reasons. 

This changes if one permits self-interacting gerbes to be described by 2-forms $\beta$ together with 1-forms $A$, $A \in \Omega^1(M,\V)$ and $B \in \Omega^2(M,\W)$, where $\V$ and $\W$ are, at this point, arbitrary vector spaces used only to label these gauge fields. From a physical perspective, one wants that these gauge fields are described by a functional similar to the one of Yang and Mills above. Now there are two possible perspectives one can adopt: 
\begin{enumerate}\item[{\bf 1.}] The ''3-form curvatures'' $G \in \Omega^3(M,\W)$ should enter such a functional quadratically, but the 2-forms $F \in \Omega^2(M,\V)$ should be constrained to vanish and thus enter the corresponding non-abelian gerbe functional linearly with a Lagrange multiplier in front. 
\item[{\bf 2.}] Both, $G$ and $F$ should enter the functional quadratically, following the model of \eqref{YM}:
\begin{equation} S_{gerbes}[A,B]= \int_M \kappa_\mathbb{\V}(F \stackrel{\wedge}{,} \star F) + \kappa_\mathbb{\W}(G \stackrel{\wedge}{,} \star G) \, .
\label{gerbe}
\end{equation}
\end{enumerate}

It is a general, undisputed belief in the physics community that every gauge theory for coupled 1- and 2-form gauge fields should be governed by \emph{some} Lie 2-algebra.\footnote{See, for example, \cite{Baez02}---and \cite{GruStr} for a possible systematic argument.} A strict Lie 2-algebra can be described by a crossed module of Lie algebras \cite{Baez02}, which in turn can be provided by the following data: 
a Lie algebra  $\mathbb{g}$, a $\mathbb{g}$-module $\W$, and an equivariant linear map $t \colon \W \to \mathbb{g}$ such that  for all $w \in \W$ one has $t(w) \cdot w = 0$. 

A semistrict Lie 2-algebra, on the other hand, which we will call simply a Lie 2-algebra in what follows, consists of similar data: an almost Lie algebra 
$(\V, \{ \cdot , \cdot \})$, a vector space $\W$, a map 
\begin{equation}\label{tWV}
t \colon \W \to \V\,,
\end{equation} an operation $\ast \colon \V \otimes \W \to \W$, and, last but not least, in addition a tensor $\gamma \in \Lambda^3 \V^* \otimes \W$. These data have to satisfy several compatibility conditions (cf.\ Eqs.\ (a)-(i) in Definition \ref{Lie2} below with $l_3\equiv \gamma$); but, in particular, for $\gamma=0$, the semi-strict Lie 2-algebra becomes strict, $(\V, \{ \cdot , \cdot \})$ turns into the Lie algebra $\mathbb{g}$ and $v \ast w$ agrees with the action $v \cdot w$ of this Lie algebra on the  $\mathbb{g}$-modul $\W$. A non-vanishing $\gamma$ governs the violation of essentially every structural identity of the strict Lie 2-algebra, like the Jacobi identity
for the bracket $ \{ \cdot , \cdot \}$ or the representation of this bracket by means of $\ast$. We thus call $\gamma$ the ''anomaly'' of the Lie 2-algebra; it corresponds to the 3-bracket in the $L_\infty$-language.

Now, all these data enter the definition of $F$ and $G$ in terms of the fundamental quantities $A$ and $\beta$ as follows,
\begin{equation} F = \mathrm{d}_M A + \tfrac{1}{2} \{ A \stackrel{\wedge}{,} A\} - t(B) \, , \quad G = \mathrm{d}_M B+ A \ast B - \tfrac{1}{6} \gamma(A \stackrel{\wedge}{,}A \stackrel{\wedge}{,}A) -
 A \circ F \, , \label{FG}
\end{equation}
where, at this point, $$ \circ \colon \V \otimes \V \to \W$$ is a completely unconstrained operation that we have not introduced above yet (an additional wedge-product between $A$ and $F$---but also $A$ and $B$---is understood). It completes the possible terms compatible with form degrees in the definitions of $F$ and $G$ (at least, if one does not use terms depending on Hodge duals of forms, which can be added only for specific dimensions of $M$).

If one chooses option {\bf 1.} for a functional governing the dynamics, then \emph{all} Lie 2-algebra data give a consistent gauge theory, for an arbitrary product $\circ$---in fact, the last term in \eqref{FG} can be made to disappear by an appropriate redefinition of the Lagrange multiplier field!---provided only that $\W$ carries a scalar product $\kappa_\W$ satisfying the invariance property
\begin{equation} \kappa_\W(v \ast w, w) = 0 \label{kappaW}
\end{equation}
for all $v \in \V$ and $w \in \W$, which reduces to mere $\mathbb{g}$-invariance for $\gamma=0$.

The situation changes drastically for option  {\bf 2.}, which is much more restrictive for what concerns the data defined on $\V$ and $\W$. Actually, we see that a \emph{necessary} condition of gauge invariance of the functional \eqref{gerbe} is that the $\W$-valued 3-form $G$ transforms into itself (up to some endomorphism of $\W$). 
The main result of \cite{Str} is that this behaviour of $G$ under gauge transformations holds true, iff the following two items are satisfied:
\begin{enumerate} \item 
$\V$ carries the structure of a Leibniz algebra, the product of which we denote by a bracket $[ \cdot , \cdot]$, such that 
\begin{equation}(\V, [ \cdot , \cdot], t \colon \W \to \V, \circ) \label{ELA}\end{equation}
forms what we call an enhanced Leibniz algebra (cf.\ Definition \ref{definition_enhanced_Leibniz} below for the axioms).
\item The Lie 2-algebra data $(\V,\{ \cdot , \cdot \},  t \colon \W \to \V, \ast, \gamma)$ are determined uniquely from the data of the ELA as follows: the bracket $\{ \cdot , \cdot \}$ is the anti-symmetric part of the Leibniz bracket, the operation $\ast$ is given by the formula $v \ast w := t(w) \circ v$, and, last but not least, the anomaly $\gamma$ is 
\begin{eqnarray}
\gamma(v_1,v_2,v_3)\,:=\,-v_1\circ[v_2,v_3]+[v_1,v_2]\circ v_3+v_2\circ[v_1,v_3] \nonumber \\
-t(v_2\circ v_3)\circ v_1-t(v_1\circ v_2)\circ v_3+t(v_1\circ v_3)\circ v_2 \, .\label{gamma}
\end{eqnarray}
This statement includes, in particular, the highly non-trivial fact that every ELA induces a Lie 2-algebra by means of the above formulas.
\end{enumerate}
For full gauge invariance of the functional \eqref{gerbe}, we need, in addition, some constraining condition on the scalar products $\kappa_\V$ and $\kappa_\W$. One finds that $\kappa_\V$ should be invariant with respect to the left action of $\V$ onto itself, $\kappa_\V([v_1,v_2],v_2)= 0$ for all $v_1,v_2 \in \V$, together with the previous relation \eqref{kappaW}, which we now can rewrite in the form $\kappa_\W(t(w) \circ v,w)= 0$ for all $v \in \V$ and $w \in \W$. We call such data, an ELA together with these invariant scalar products,  needed for the consistent definition of \eqref{gerbe}, a (postive) quadratic ELA---in analogy to the quadratic Lie algebras, needed for the definition of ordinary Yang-Mills theories \eqref{YM}.

\vspace{5mm}
This almost concludes our excursion into the mathematical physics of non-abelian gerbes and we now turn to the algebraic structures that we want to analyze in this paper. An ELA is given by the data \eqref{ELA} satisfying the Properties (a), (b), (c), and (d) in Definition \ref{definition_enhanced_Leibniz} below. 

In {\it Section 1} we study the basic properties following from these axioms and define morphisms between different ELAs. While this defines the category {\bf ELeib}, there is an important subcategory {\bf sELeib} of it consisting of symmetric ELAs, i.e.\ ELAs for which the product $\circ$ is symmetric, $v_1 \circ v_2 = v_2 \circ v_1$, together with those morphisms which preserve this symmetry. We will show an equivalence between {\bf ELeib} and {\bf sELeib}, which implies that for many purposes we may restrict to the simpler setting where the $\W$-valued product on $\V$ is symmetric. We conclude this section by specifying the hidden Lie algebra ${\mathfrak{g}}$ that governs the invariant scalar products of a quadratic ELA. 

In {\it Section 2} we address the question of how much more information one needs in addition to the underlying Leibniz algebra so as to obtain a general ELA. We first show that if the map \eqref{tWV} is injective, all the data of an sELA \eqref{ELA} correspond in a canonical way to
what we call a Leibniz couple $\V \supset {\mathfrak{i}}$, which is  the underlying Leibniz algebra $(\V, [\cdot , \cdot])$ together with the choice of an appropriate ideal ${\mathfrak{i}}\subset \V$, see Definition \ref{Leibnizcouple} and Proposition \ref{proposition_injective}. In fact, ${\mathfrak{i}}$ is singled out inside $\V$ as the image $t(\W)$ of the map \eqref{tWV}. 

A general sELA now corresponds to such a Leibniz couple together with a 2-cohomology class $[\Delta] \in H^2_{\mathrm{d}}(\V)\otimes \ker(t)$, see Theorem \ref{maintheorem}. Here every representative $\Delta$ corresponds to a map $S^2(\V^*)\to \ker(t)$ which has to vanish upon restriction to $S^2{t(\W)}$; the coboundary operator $\mathrm{d}$ turns out to not coincide with the Loday differential $\mathrm{d}_L$, but instead with a differential in another complex induced by the Loday one  (see Definition \ref{defd} and Remark \ref{remd}, item 4)). 
A $\mathrm{d}_L$ 2-class still comes into the game when one describes the Leibniz couple in terms of a Lie algebra ${\mathfrak{g}}:= \V/t(\W)$ and its module $t(\W)$. Several examples of ELAs and sELAs illustrate the findings, some of which are deferred into {\it Appendix A}. Section 2 concludes with a characterization of a positive quadratic ELA, which is what one needs for defining the action functional \eqref{gerbe}, in terms of purely Lie algebraic data, see Proposition \ref{character}.

In {\it Section 3} we construct the functor ${\cal F}$ from {\bf ELeib}  to {\bf Lie-2-alg}. The main formulas for the Lie 2-algebra induced by an enhanced Leibniz algebra \eqref{ELA} were already given in 2.\ above. This Lie 2-algebra can be seen also equivalently as a 2-term $L_\infty$-algebra defined over the complex \eqref{tWV}, with $t$ the 1-bracket, complemented by 2- and 3-brackets. These have to satisfy several higher Jacobi-type identities. The proof is rather involved and  in \cite{Str} only the result was announced. A huge technical simplification arises from the fact that one can restrict to an sELA in a first step. Given a non-symmetric ELA \eqref{ELA}, the corresponding sELA is obtained by mere symmetrization of $\circ$. The original ELA can be reobtained for a unique morphism, which we call a $\beta$-transform $\tau_{\beta}$ and which is determined by some $\beta \in \Lambda^2 \V^* \otimes \W$. One then proves the validity of the axioms of the induced Lie 2-algebra in the simplified setting of the sELA. Applying ${\cal F}(\tau_\beta)$ provides the Lie 2-algebra for the original ELA and extends the proof to this setting as well.  That the  induced Lie 2-algebra depends critically on the antisymmetric part of $\circ$-product is visible, e.g., from Equation \eqref{gamma} for the anomaly $\gamma$ in terms of the ELA data, where the order of terms left and right of $\circ$ is essential for it to satsify the higher Jacobi identities. 

For a final time we return to the original field-theoretic setting, as it sheds additional light on the last-mentioned facts. Consider the following field redefinition:
\begin{equation}B \mapsto B - \tfrac{1}{2}\beta(A \stackrel{\wedge}{,} A)\, . \label{Btransformed}
\end{equation}
This implies in particular that $\mathrm{d}_M B \mapsto \mathrm{d}_M B + \beta(A \stackrel{\wedge}{,} \mathrm{d}_M A)$. Let $\beta \in \Lambda^2 \V^* \otimes \W$ be chosen to coincide with the antisymmetric part of $\circ$. Now plugging the transformation \eqref{Btransformed} into the defining relations \eqref{FG} for $F$ and $G$, we see that all the other structural entities entering these expressions change by the redefinition as well  (except for the map $t$). This provides the formulas for ${\cal F}(\tau_\beta)$. The equivalence of 
categories of general and symmetric ELAs corresponds to a field redefinition in the gauge theory. And this same field redefinition induces also the correct formulas for the corresponding changes of induced Lie 2-algebra, so as for the functor ${\cal F}$ to be well-defined.

\vspace{.5cm}

\noindent{\bf Acknowledgements:} FW thanks Institut Camille Jordan at
Universit\'e Claude Bernard Lyon 1 for several visits during which part of this work
has been done.

\section{Enhanced Leibniz algebras and \\the symmetrization functor}

All vector spaces are over a field $\K$ of characteristic different from $2$. 

\begin{defi}   \label{definition_enhanced_Leibniz}
An enhanced Leibniz algebra (ELA) is a triple \\$(t\colon\W\to\V,[\cdot,\cdot],\circ)$ consisting of 
a linear map
$$t \colon {\mathbb W}\to{\mathbb V}$$ where $(\V, [\cdot,\cdot])$ is a left Leibniz algebra, i.e. for all $v_1,v_2,v_3\in{\mathbb V}$ one has
\begin{equation}[v_1,[v_2,v_3]]\,=\,[[v_1,v_2],v_3]+[v_2,[v_1,v_3]],\label{Leibniz}
\end{equation}
 and a (bilinear) product $\circ\colon{\mathbb V}\times{\mathbb V}\to
{\mathbb W}$ such that for all $v\in{\mathbb V}$ and all $w\in{\mathbb W}$
\begin{enumerate}
\item[(a)] $[t(w),v]\,=\,0$,
\item[(b)] $t(w)\circ t(w)\,=\,0$,
\item[(c)] $u \stackrel{s}{\circ} [v,v]\,=\,v\stackrel{s}{\circ}[u,v]$,
\item[(d)] $[v,v]\,=\,t(v\circ v)$.
\end{enumerate}
\end{defi}

By $\stackrel{s}{\circ}$ we denote the symmetrization of the $\circ$-product, $u\stackrel{s}{\circ}v\,\equiv\,\frac{1}{2}(u\circ v + v\circ u).$

\begin{defi} \label{definition_senhanced_Leibniz}
A symmetric enhanced Leibniz algebra (sELA) is an ELA $(t\colon\W\to\V,[\cdot,\cdot],\circ)$ such that $\circ$ is symmetric, i.e. $\stackrel{s}{\circ}= \circ$. 
\end{defi}

A Leibniz algebra ${\V}$ has two canonical (two-sided, abelian) 
ideals, namely the {\it ideal of squares}
$$\Sq({\V})\,:=\,\{x\in{\V}\,|\,
x=\sum\lambda_i[x_i,x_i]\},$$
and the {\it left-center} of ${\V}$
$$Z_L({\V})\,:=\,\{x\in{\V}\,|\,
\,\forall\, y\in{\V}:\,\,\,[x,y]=0\}.$$
It follows directly from \eqref{Leibniz} with $v_1=v_2$ that $\Sq({\V})\subset Z_L({\V})$. Evidently, the corresponding quotient Leibniz algebras are Lie algebras. 
Observe that $\Sq({\V})$ behaves well under morphisms
of Leibniz algebras, while this is not necessarily the case for $Z_L({\V})$. 

For later use it will be convenient to determine the polarization of the 
identities appearing in Definition \ref{definition_enhanced_Leibniz}:
\begin{lem} \label{polarization_lemma}
In an ELA $(t\colon\W\to\V,[\cdot,\cdot],\circ)$, we have the following identities
for all $w_1,w_2\in{\mathbb W}$ and all $u,v_1,v_2\in{\mathbb V}$:
\begin{enumerate}
\item[(a)] $t(w_1)\circ t(w_2)\,=\,-t(w_2)\circ t(w_1)$,
\item[(b)] $u\stackrel{s}{\circ} \left([v_1,v_2]+ [v_2,v_1]\right)\,=\,v_1\stackrel{s}{\circ}[u,v_2]+v_2\stackrel{s}{\circ}[u,v_1]$,
\item[(c)] $[v_1,v_2]+[v_2,v_1]\,=\,t(v_1\circ v_2+ v_2\circ v_1)$.
\end{enumerate}
\end{lem}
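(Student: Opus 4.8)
The plan is to obtain each polarized identity by applying a standard polarization argument to the corresponding axiom of Definition \ref{definition_enhanced_Leibniz}, using the bilinearity of all the operations involved and the hypothesis that $\mathrm{char}\,\K \neq 2$. The general mechanism is: given an axiom of the shape $Q(x)=0$ (or $Q(x)=R(x)$) where $Q$ is quadratic in $x$, substitute $x=x_1+x_2$, expand by bilinearity, and subtract off $Q(x_1)$ and $Q(x_2)$ (which vanish, resp.\ equal $R(x_i)$, by the axiom applied to $x_1$ and $x_2$ separately); the cross terms that survive give the polarized identity.

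Concretely, for (a) I would start from axiom (b), $t(w)\circ t(w)=0$, and set $w=w_1+w_2$. Bilinearity of $\circ$ and linearity of $t$ give $t(w_1)\circ t(w_1) + t(w_1)\circ t(w_2) + t(w_2)\circ t(w_1) + t(w_2)\circ t(w_2) = 0$; the first and last terms vanish by axiom (b), leaving $t(w_1)\circ t(w_2) = -\,t(w_2)\circ t(w_1)$. For (b), I would take axiom (c), $u\stackrel{s}{\circ}[v,v]=v\stackrel{s}{\circ}[u,v]$, and substitute $v=v_1+v_2$; expanding $[v_1+v_2,v_1+v_2]=[v_1,v_1]+[v_1,v_2]+[v_2,v_1]+[v_2,v_2]$ on the left and $[u,v_1+v_2]=[u,v_1]+[u,v_2]$ together with the outer $v=v_1+v_2$ on the right, then subtracting the two instances of axiom (c) for $(u,v_1)$ and $(u,v_2)$, cancels the ``diagonal'' pieces and yields exactly $u\stackrel{s}{\circ}\bigl([v_1,v_2]+[v_2,v_1]\bigr)=v_1\stackrel{s}{\circ}[u,v_2]+v_2\stackrel{s}{\circ}[u,v_1]$. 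For (c), I would polarize axiom (d), $[v,v]=t(v\circ v)$, in $v=v_1+v_2$: the left side becomes $[v_1,v_1]+[v_1,v_2]+[v_2,v_1]+[v_2,v_2]$, the right side becomes $t(v_1\circ v_1)+t(v_1\circ v_2)+t(v_2\circ v_1)+t(v_2\circ v_2)$, and subtracting the instances of (d) for $v_1$ and $v_2$ leaves $[v_1,v_2]+[v_2,v_1]=t(v_1\circ v_2+v_2\circ v_1)$.

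None of these steps is a genuine obstacle; the only point requiring a word of care is the bookkeeping in (b), where both occurrences of $v$ in the identity must be expanded simultaneously and one must keep track of which cross terms are killed by the un-polarized axiom and which remain. I would also note in passing that, conversely, the polarized identity (c) with $v_1=v_2$ recovers axiom (d) up to the factor $2$ (hence the need for $\mathrm{char}\,\K\neq 2$), and similarly (a) with $w_1=w_2$ recovers (b), so the polarized forms are genuinely equivalent to the original axioms over such a field; this is worth recording for the later structural arguments but is not needed for the statement as such.
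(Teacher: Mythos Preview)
Your argument is correct and is exactly the standard polarization procedure the paper has in mind: the lemma is stated there without proof, introduced simply as ``the polarization of the identities appearing in Definition~\ref{definition_enhanced_Leibniz}''. Your bookkeeping in each of (a), (b), (c) is accurate, and the remark about the converse (recovering the axioms from the polarized forms using $\mathrm{char}\,\K\neq 2$) is a nice addendum, though not required for the statement.
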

{}From each of these relations we draw a consequence that will be useful in the following (using Property (a) of an ELA for the last two of them):
\begin{cor} \label{corlemma}
For every $u,v\in \V$ and $w,w' \in \W$ one has
\begin{enumerate} 
\item[(1)]  $t(w)\stackrel{s}{\circ}t(w')\,=\,0$,
\item[(2)] $u \stackrel{s}\circ [v,t(w)] = v \stackrel{s}\circ [u,t(w)] + t(w) \stackrel{s}\circ [u,v]$,
\item[(3)] $[v,t(w)]=2t(v \stackrel{s}\circ t(w))$.
\end{enumerate}
\end{cor}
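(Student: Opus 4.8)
The plan is to obtain each of the three identities by specialising, to arguments of the form $t(w)$, the corresponding polarised relation of Lemma~\ref{polarization_lemma}, and then exploiting Property~(a) of an ELA, i.e.\ $[t(w),v]=0$, to kill the terms in which $t(w)$ sits in the first slot of a bracket.

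For item~(1) I would simply read off Lemma~\ref{polarization_lemma}(a): it states $t(w)\circ t(w')=-t(w')\circ t(w)$, so $t(w)\stackrel{s}{\circ}t(w')=\tfrac12\big(t(w)\circ t(w')+t(w')\circ t(w)\big)=0$. Here the only ingredient beyond Lemma~\ref{polarization_lemma}(a) is that $\tfrac12$ makes sense, which is our standing assumption $\mathrm{char}\,\K\neq 2$.

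For item~(2) I would put $v_1=v$ and $v_2=t(w)$ in Lemma~\ref{polarization_lemma}(b). The left-hand side becomes $u\stackrel{s}{\circ}\big([v,t(w)]+[t(w),v]\big)$, and $[t(w),v]=0$ by Property~(a) of Definition~\ref{definition_enhanced_Leibniz}, so it reduces to $u\stackrel{s}{\circ}[v,t(w)]$; the right-hand side is already $v\stackrel{s}{\circ}[u,t(w)]+t(w)\stackrel{s}{\circ}[u,v]$, which is exactly the asserted equality. Item~(3) is obtained the same way from Lemma~\ref{polarization_lemma}(c) with $v_1=v$, $v_2=t(w)$: the left-hand side $[v,t(w)]+[t(w),v]$ collapses to $[v,t(w)]$ by Property~(a), while the right-hand side is $t\big(v\circ t(w)+t(w)\circ v\big)=t\big(2\,v\stackrel{s}{\circ}t(w)\big)=2\,t\big(v\stackrel{s}{\circ}t(w)\big)$ by linearity of $t$ and the definition of $\stackrel{s}{\circ}$.

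There is essentially no hard step here: the whole argument is a substitution followed by one application of the ELA axiom $[t(\W),\V]=0$. The only point requiring a little care is bookkeeping — making sure that in each specialisation it is the \emph{second} slot that is set to $t(w)$, so that the vanishing term is $[t(w),\cdot]$ rather than $[\cdot,t(w)]$ (the latter need not vanish — indeed item~(3) computes it explicitly) — and, in item~(1), that division by $2$ is legitimate.
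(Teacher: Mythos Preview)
Your proof is correct and follows exactly the approach the paper indicates: each item is obtained from the corresponding polarised identity of Lemma~\ref{polarization_lemma}, with Property~(a) of Definition~\ref{definition_enhanced_Leibniz} used to eliminate the $[t(w),\cdot]$ terms in items~(2) and~(3).
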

\begin{lem} \label{im_t_abelian_ideal}
In an ELA $(t\colon\W\to\V,[\cdot,\cdot],\circ)$, the image $\im(t)$ is an abelian ideal
in the Leibniz algebra $(\V,[\cdot,\cdot])$. 
\end{lem}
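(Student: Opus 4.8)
To show that $\im(t)$ is an ideal I have to verify the two inclusions $[\,\im(t),\V\,]\subseteq\im(t)$ and $[\,\V,\im(t)\,]\subseteq\im(t)$, and to show it is abelian I have to verify that the Leibniz bracket vanishes identically on $\im(t)\times\im(t)$. All three facts will follow essentially at once from the axioms of Definition \ref{definition_enhanced_Leibniz} together with Corollary \ref{corlemma}; no genuinely new computation is needed.

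First, the left inclusion: for every $w\in\W$ and $v\in\V$, Property (a) gives $[t(w),v]=0$, so in particular $[t(w),v]\in\im(t)$. This moreover shows the stronger statement $\im(t)\subseteq Z_L(\V)$, which I would record in passing since it clarifies why the left bracket is trivial. The abelian property is now immediate from the same identity: taking $v=t(w')$ for $w'\in\W$ yields $[t(w),t(w')]=0$, so the bracket restricted to $\im(t)$ is identically zero (which is of course also consistent with the polarized identity Lemma \ref{polarization_lemma}(a) / Corollary \ref{corlemma}(1)).

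The one inclusion that is not completely tautological is the right one, $[\,\V,\im(t)\,]\subseteq\im(t)$. Here I would invoke Corollary \ref{corlemma}(3): for all $v\in\V$ and $w\in\W$ one has $[v,t(w)]=2\,t\bigl(v\stackrel{s}{\circ}t(w)\bigr)$, and the right-hand side manifestly lies in $\im(t)$. This is the only step that really uses Axiom (d) (propagated through the Leibniz derivation property into Corollary \ref{corlemma}(3)), so it is the ``heart'' of the lemma; everything else is a direct read-off from Axiom (a). Combining the two inclusions shows $\im(t)$ is a two-sided ideal, and together with the vanishing of $[t(w),t(w')]$ this completes the proof that $\im(t)$ is an abelian ideal of $(\V,[\cdot,\cdot])$. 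I do not anticipate any obstacle beyond correctly citing the polarized identities; the content has effectively been front-loaded into Lemma \ref{polarization_lemma} and Corollary \ref{corlemma}.
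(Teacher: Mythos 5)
Your proof is correct and follows the same route as the paper: Property (a) for the left inclusion and the abelian property, and Corollary \ref{corlemma}(3) for the right inclusion. (Only a minor quibble: Corollary \ref{corlemma}(3) comes from polarizing Axiom (d) and applying Axiom (a), not from the Leibniz derivation property, but this does not affect the argument.)
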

\begin{prf}
On the one hand, $[t(w),v]=0$ for all $w\in\W$ and all $v\in\V$ by the defining Property (a). 
On the other hand, item 3) of the above corollary implies  $[v,t(w)]\in\im(t)$. Thus  $\im(t)$ is an ideal. It is abelian, $[t(w),t(w')]=0$. 
\end{prf}

Let us describe what Property (c) means in the special case where the Leibniz algebra
$(\V,[\cdot,\cdot])$ is a Lie algebra, i.e. $[v,v]=0$ for all $v\in\V$, and the circle product $\circ$ is symmetric. 

\begin{lem} \label{Liebil}
Consider an sELA $(t\colon\W\to\V,[\cdot,\cdot],\circ)$ where $(\V,[\cdot,\cdot])$ is a Lie algebra. Then $\circ\colon\V\otimes\V\to\W$ is a symmetric invariant bilinear form with values in $\W$.
\end{lem}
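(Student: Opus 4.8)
The plan is to unwind the two things the lemma asserts—symmetry and invariance of $\circ$—and show both follow almost directly from the ELA axioms once we specialize to the Lie algebra case. Symmetry is nothing to prove here: by hypothesis we are dealing with an sELA, so $\circ = \stackrel{s}{\circ}$ is symmetric by definition. Bilinearity of $\circ$ is part of the definition of an ELA. So the only real content is the invariance identity, which I expect to read
\begin{equation}
[v_1,v_2] \circ v_3 + v_2 \circ [v_1,v_3] \,=\, 0 \qquad \text{for all } v_1,v_2,v_3 \in \V, \label{inv}
\end{equation}
i.e.\ $\circ$ is invariant under the adjoint action of the Lie algebra $\V$ on itself (with values in the trivial $\V$-module structure carried by $\W$, since $t$ plays no role here once we are in a Lie algebra).

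**First I would** extract the right polarized statement from Property (c) / Lemma \ref{polarization_lemma}(b). In a Lie algebra $[v,v]=0$, so Property (c), $u\stackrel{s}{\circ}[v,v] = v\stackrel{s}{\circ}[u,v]$, collapses on its left-hand side and, using antisymmetry of the bracket together with symmetry of $\circ$, gives $v \circ [u,v] = 0$ for all $u,v$. This is \eqref{inv} in the ''diagonal'' case $v_2=v_3=:v$, $v_1 =: u$. Then I would polarize: replace $v$ by $v_2 + v_3$ in $v\circ [u,v]=0$, expand using bilinearity of $\circ$ and of the bracket, and subtract the two diagonal identities $v_2\circ[u,v_2]=0$ and $v_3\circ[u,v_3]=0$. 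What survives is $v_2 \circ [u,v_3] + v_3 \circ [u,v_2] = 0$, and since $\circ$ is symmetric this is precisely $v_2\circ[u,v_3] + [u,v_2]\circ v_3 = 0$, i.e.\ \eqref{inv} with $u = v_1$. Alternatively one can read \eqref{inv} straight off Lemma \ref{polarization_lemma}(b): the left-hand side $u\stackrel{s}{\circ}([v_1,v_2]+[v_2,v_1])$ vanishes because the bracket is antisymmetric, leaving $v_1\stackrel{s}{\circ}[u,v_2] + v_2\stackrel{s}{\circ}[u,v_1]=0$, which is the same symmetrized invariance relation.

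**The only subtlety** — and the step I would be most careful about — is bookkeeping the symmetrizations: in the general ELA one only controls $\stackrel{s}{\circ}$, but here we are explicitly in an sELA so $\stackrel{s}{\circ}=\circ$ and every ''$\stackrel{s}{\circ}$'' in Lemma \ref{polarization_lemma} and Corollary \ref{corlemma} may be freely replaced by $\circ$. I would state this reduction explicitly at the start of the proof so the polarization manipulations above are unambiguous. After that, writing ''symmetric invariant bilinear form with values in $\W$'' just means: $\circ$ is bilinear (definition of ELA), symmetric (definition of sELA), and satisfies \eqref{inv} (shown above); so the proof is essentially a one-line deduction from Lemma \ref{polarization_lemma}(b) plus antisymmetry of the Lie bracket, and I do not anticipate any genuine obstacle — it is a matter of presenting the specialization cleanly rather than discovering anything.
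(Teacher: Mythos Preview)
Your proposal is correct and follows essentially the same route as the paper: specialize Property (c) to the Lie case to obtain $v\circ[u,v]=0$, then polarize to get the invariance identity. The paper additionally packages the conclusion as total antisymmetry of $\alpha(v,v',v''):=v\circ[v',v'']$ before restating it as invariance, but this is just a rephrasing of what you wrote.
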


\begin{prf}
Indeed, Property (c) reads in this framework $v\circ[u,v]=0$.  
By polarisation, one obtains $v\circ[v',v'']=-v''\circ[v',v]$. Together with the antisymmetry of the Lie-bracket $[\cdot,\cdot]$, this implies total antisymmetry in the three arguments of 
$$\alpha(v,v',v''):=v\circ[v',v''].$$
This means that the ($\W$-valued) symmetric bilinear form $(v,v'):=v\circ v'$
is invariant under the Lie algebra $(\V,[\cdot,\cdot])$ (acting trivially on $\W$):
$$
(v,[v',v''])
=([v,v'],v'').$$
\end{prf}
Conversely, for every Lie algebra $(\V,[\cdot,\cdot])$ together with an invariant bilinear form 
$\circ\colon\V\otimes\V\to\K$, the triple $(t\colon\W\to\V,[\cdot,\cdot],\circ)=(0\colon\K\to\V,[\cdot,\cdot],\circ)$ is an enhanced Leibniz algebra.   

More generally, returning to not-necessarily Lie-type Leibniz algebras, one has the following result.
\begin{prop}\label{Str}\cite{Str} Every Leibniz algebra $(\V,[\cdot,\cdot])$ is an sELA in a canonical way by setting $\W := \Sq(\V)$, $t$ the canonical inclusion into $\V$, and $\circ$ the symmetrization of the Leibniz bracket.
\end{prop}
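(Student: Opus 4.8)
The plan is to take $\V$ with its Leibniz bracket, set $\W := \Sq(\V)$ with $t \colon \W \hookrightarrow \V$ the inclusion, and define $u \circ v := u \stackrel{s}{\circ} v = \tfrac{1}{2}([u,v]+[v,u])$, checking first that this lands in $\W$ and then that axioms (a)--(d) of Definition~\ref{definition_enhanced_Leibniz} hold. For well-definedness of $\circ$, note that $u \circ v$ is the symmetrization of the bracket, and by polarizing the identity $\Sq(\V) \subseteq Z_L(\V)$ (which follows from \eqref{Leibniz} with $v_1=v_2$) one gets $[u,v]+[v,u] \in \Sq(\V)$: indeed $[u+v,u+v] = [u,u]+[v,v]+[u,v]+[v,u]$, so $[u,v]+[v,u] = [u+v,u+v]-[u,u]-[v,v] \in \Sq(\V) = \W$. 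Hence $\circ$ is a well-defined symmetric bilinear map $\V \times \V \to \W$, so whatever ELA we obtain is automatically symmetric.

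Next I would verify the four axioms in order. For (a), $[t(w),v] = 0$ for $w \in \Sq(\V)$ is exactly the statement $\Sq(\V) \subseteq Z_L(\V)$. For (d), $[v,v] = \tfrac{1}{2}([v,v]+[v,v]) = v \stackrel{s}{\circ} v = v \circ v = t(v \circ v)$ since $t$ is the inclusion and $v\circ v = [v,v] \in \Sq(\V)$. For (b), $t(w) \circ t(w) = t(w) \stackrel{s}{\circ} t(w) = [t(w),t(w)]$; but $t(w) \in \Sq(\V) \subseteq Z_L(\V)$, so $[t(w),t(w)] = 0$. The one axiom requiring genuine computation is (c): $u \stackrel{s}{\circ} [v,v] = v \stackrel{s}{\circ} [u,v]$. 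Unwinding, the left side is $\tfrac{1}{2}\bigl([u,[v,v]] + [[v,v],u]\bigr)$; since $[v,v] \in Z_L(\V)$ the second term vanishes, leaving $\tfrac{1}{2}[u,[v,v]]$. The right side is $\tfrac{1}{2}\bigl([v,[u,v]] + [[u,v],v]\bigr)$. So (c) reduces to the identity $[u,[v,v]] = [v,[u,v]] + [[u,v],v]$, which is precisely the left-Leibniz rule \eqref{Leibniz} applied to the triple $(v,u,v)$: $[v,[u,v]] = [[v,u],v] + [u,[v,v]]$, and then one rewrites $[[v,u],v]$ using $[v,u] = -[u,v] + ([u,v]+[v,u])$ together with the fact that $[u,v]+[v,u] \in \Sq(\V) \subseteq Z_L(\V)$ kills its bracket with $v$, giving $[[v,u],v] = -[[u,v],v]$; substituting back yields exactly $[u,[v,v]] = [v,[u,v]] + [[u,v],v]$.

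The only mild subtlety, and the step I expect to require the most care, is the manipulation in (c): one must track the difference between $[v,u]$ and $-[u,v]$, which is the symmetrized bracket and therefore lies in $\Sq(\V) \subseteq Z_L(\V)$, so that brackets involving it on the left vanish. This is a one-line observation once isolated, but it is the place where the Leibniz (rather than Lie) nature of the bracket actually enters, so it deserves to be spelled out. Everything else is immediate from the definitions and from $\Sq(\V) \subseteq Z_L(\V)$. I would also remark at the end that this construction is natural: a morphism of Leibniz algebras restricts to a map on ideals of squares and intertwines the symmetrized brackets, so Proposition~\ref{Str} in fact exhibits a functor from Leibniz algebras to \textbf{sELeib}, recovering the motivating statement from the introduction that every Leibniz algebra gives rise to an ELA.
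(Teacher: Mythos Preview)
Your proof is correct and follows essentially the same approach as the paper: properties (a), (b), (d) are immediate from $\Sq(\V)\subseteq Z_L(\V)$, and (c) comes from a single application of the Leibniz identity. The only difference is cosmetic---the paper applies \eqref{Leibniz} directly to the triple $(u,v,v)$, obtaining $[u,[v,v]]=[[u,v],v]+[v,[u,v]]$ in one step, whereas you apply it to $(v,u,v)$ and then need the extra observation $[[v,u],v]=-[[u,v],v]$; both routes are equivalent.
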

\begin{prf} Properties (a), (b), and (d) are satisfied by construction and the properties of $\Sq(\V)$. It remains to verify Property (c). Putting $v_1=u$, $v_2=v_3=v$ in \eqref{Leibniz}, we obtain $[u,[v,v]]= [[u,v],v]+ [v,[u,v]]$; adding to the left hand side the term $[[v,v],u]=0$, we obtain
$$ [u,[v,v]]+ [[v,v],u]= [v,[u,v]]+[[u,v],v],$$
which, upon the identification of the symmetrization of the Leibniz bracket with the (then inherently symmetric) $\circ$-product, indeed turns precisely into (c).
\end{prf}
On inspection of Property (d) of an ELA, one concludes furthermore:
\begin{cor}\label{surjcirc}\cite{Str} 
Every sELA $(t\colon\W\to\V,[\cdot,\cdot],\circ \colon \V \otimes \V \to \W)$  with an injective map $t$ and a surjective map $\circ$ reduces canonically to its underlying Leibniz algebra 
$(\V,[\cdot,\cdot])$, with the structural maps as in the previous proposition.
\end{cor}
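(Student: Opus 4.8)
The plan is to show that given an sELA $(t\colon\W\to\V,[\cdot,\cdot],\circ)$ with $t$ injective and $\circ$ surjective, the structural data $t$, $\W$, and $\circ$ are all forced by the underlying Leibniz algebra $(\V,[\cdot,\cdot])$ to be (up to the obvious identifications) those of Proposition \ref{Str}. First I would identify $\im(t)$: by Property (d), every square $[v,v]$ lies in $\im(t)$, so $\Sq(\V)\subseteq\im(t)$. For the reverse inclusion I would use surjectivity of $\circ$ together with Lemma \ref{polarization_lemma}(c): an arbitrary element of $\im(t)$ has the form $t(w)$ with $w=\sum_i \lambda_i\, u_i\circ v_i$ (here using $\circ$ surjective, hence also that symmetric combinations of products span $\W$), and then $t(u\circ v)=t(u\stackrel{s}{\circ}v)=\tfrac12 t(u\circ v+v\circ u)=\tfrac12([u,v]+[v,u])\in\Sq(\V)$ by polarization (c). Wait---one must be slightly careful: $\tfrac12([u,v]+[v,u])$ is manifestly in $Z_L(\V)$ but to see it is in $\Sq(\V)$ note $[u,v]+[v,u]=[u+v,u+v]-[u,u]-[v,v]$. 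Hence $\im(t)=\Sq(\V)$.

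Next, since $t$ is injective, it restricts to a linear isomorphism $\W\xrightarrow{\sim}\Sq(\V)$, so we may canonically identify $\W$ with $\Sq(\V)$ and $t$ with the inclusion $\Sq(\V)\hookrightarrow\V$; this is exactly the identification made in Proposition \ref{Str}. Under this identification, Property (d) together with the injectivity of $t$ forces $v\circ v=[v,v]$ for all $v$, and polarizing (using that $\circ$ is symmetric by the sELA hypothesis) gives $u\circ v=\tfrac12([u,v]+[v,u])$, i.e.\ $\circ$ is precisely the symmetrization of the Leibniz bracket. Thus the triple $(t,\circ)$ coincides, after the canonical identifications, with the structural maps of Proposition \ref{Str}, which is the assertion; since all the identifications are canonical, this is the meaning of ``reduces canonically.'' I would close by remarking that the hypotheses are sharp in the expected sense: without surjectivity of $\circ$ one only gets $\im(t)\supseteq\Sq(\V)$ but $\W$ may be larger, and without injectivity of $t$ one cannot recover $\circ$ from the bracket.

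The only mildly delicate point---and the one I would treat most carefully---is the reverse inclusion $\im(t)\subseteq\Sq(\V)$, because surjectivity of $\circ$ gives us every element of $\W$ as a linear combination of products $u_i\circ v_i$, and we need $t$ of each such product to land in $\Sq(\V)$; the identity $[u,v]+[v,u]=[u+v,u+v]-[u,u]-[v,v]$ combined with polarization (c) of Lemma \ref{polarization_lemma} handles this cleanly, but it is worth writing out rather than asserting. Everything else is a direct consequence of Property (d), the injectivity of $t$, and the symmetry of $\circ$, and should require no further computation.
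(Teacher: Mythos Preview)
Your argument is correct and is essentially a detailed elaboration of what the paper does: the paper's entire ``proof'' is the one-line remark ``On inspection of Property (d) of an ELA, one concludes furthermore'' preceding the corollary, and your proof simply unpacks this---Property (d) with $t$ injective forces $v\circ v=t^{-1}([v,v])$, polarization then determines the symmetric product completely, and surjectivity of $\circ$ together with the polarization identity $[u,v]+[v,u]=[u+v,u+v]-[u,u]-[v,v]$ forces $\im(t)=\Sq(\V)$. Your careful treatment of the reverse inclusion $\im(t)\subseteq\Sq(\V)$ is exactly the point the paper leaves implicit.
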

It is essentially these last two observations motivating the name of the algebraic structure given in Definition \ref{definition_enhanced_Leibniz}. 

\vspace{0.5cm}
We now turn to the question of morphisms:
We first observe that only the symmetric part of $\circ$ enters the axioms (a)-(d) of an enhanced Leibniz algebra. This implies the following lemma and motivates the subsequent definitions.
\begin{lem}Let $(t\colon\V\to\W,[\cdot,\cdot],\circ)$ be an ELA and $\beta \colon\V\wedge\V\to\W$. Define a new $\circ$-product by means of $v_1 \circ' v_2 = v_1 \circ v_2 + \beta(v_1,v_2)$ for all $v_1,v_2 \in \V$. Then $(t\colon\V\to\W,[\cdot,\cdot],\circ')$ is an ELA.
\end{lem}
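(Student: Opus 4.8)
The plan is to lean entirely on the observation recorded just above the statement: only the \emph{symmetric} part $\stackrel{s}{\circ}$ of the product enters the four axioms (a)--(d) of Definition~\ref{definition_enhanced_Leibniz}, while the Leibniz axiom~\eqref{Leibniz} and the map $t$ do not involve $\circ$ at all. So the first step is to note that passing from $\circ$ to $\circ'$ does not change the symmetrization. Since $\beta$ factors through $\V\wedge\V$ (and, in any case, $\mathrm{char}\,\K\neq 2$ renders antisymmetry and the alternating property equivalent), we have $\beta(v,v)=0$ for all $v\in\V$, whence $v\circ'v=v\circ v$; more generally $v_1\stackrel{s}{\circ'}v_2=\tfrac12\bigl(v_1\circ v_2+\beta(v_1,v_2)+v_2\circ v_1+\beta(v_2,v_1)\bigr)=v_1\stackrel{s}{\circ}v_2$, because $\beta(v_1,v_2)+\beta(v_2,v_1)=0$.

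With this established I would simply run through the axioms. Axiom (a), $[t(w),v]=0$, holds verbatim as it does not mention the product. Axiom (b): $t(w)\circ't(w)=t(w)\circ t(w)+\beta(t(w),t(w))=0$, using (b) for $\circ$ and that $\beta$ is alternating. Axioms (c) and (d) are, by the first step, \emph{literally} the identities for $\circ$: in (c) one has $u\stackrel{s}{\circ'}[v,v]=u\stackrel{s}{\circ}[v,v]$ and $v\stackrel{s}{\circ'}[u,v]=v\stackrel{s}{\circ}[u,v]$, so the two sides agree just as for $\circ$; and in (d) one has $t(v\circ'v)=t(v\circ v)=[v,v]$. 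Hence $(t\colon\W\to\V,[\cdot,\cdot],\circ')$ is again an ELA.

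There is no genuine obstacle here; the argument is essentially a one-line substitution. The only point worth stating explicitly is the reduction ``$\beta$ antisymmetric $\Rightarrow$ $\beta$ alternating'', which is where the standing hypothesis $\mathrm{char}\,\K\neq 2$ (equivalently, the fact that $\beta$ is given on $\V\wedge\V$) is used; everything else follows because the axioms (a)--(d) were designed to see only $\stackrel{s}{\circ}$.
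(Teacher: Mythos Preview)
Your proof is correct and matches the paper's approach exactly: the paper does not even write out a separate proof, but simply records the observation that only the symmetric part $\stackrel{s}{\circ}$ enters the axioms (a)--(d) and states that the lemma follows. Your axiom-by-axiom verification is just a spelled-out version of this, with the added care of noting where $\mathrm{char}\,\K\neq 2$ is used.
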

\begin{defi} \label{betatrafo} For every $\beta \in \Lambda^2 \V^* \otimes \W$, we call the transformation $(t\colon\V\to\W,[\cdot,\cdot],\circ)\mapsto (t\colon\V\to\W,[\cdot,\cdot],\circ+\beta(\cdot,\cdot))$  a
{\it $\beta$-transformation} of an ELA. 
\end{defi}
\begin{defi}\label{plain}
Let $(t\colon\V\to\W,[\cdot,\cdot],\circ)$ and $(t'\colon\V'\to\W',[\cdot,\cdot]',\circ')$ be two enhanced Leibniz algebras.
A \emph{plain morphism} 
$$(\varphi,\psi)\colon(t\colon\V\to\W,[\cdot,\cdot],\circ)\to(t'\colon\V'\to\W',[\cdot,\cdot]',\circ')$$
is the data of a commutative square
\begin{equation} \label{diagram}
\xymatrix{ \W  \ar[r]^{t} \ar[d]^{\varphi} & \V \ar[d]^{\psi}  \\
             \W' \ar[r]^{t'} & \V'  }\end{equation}
such that $\psi\colon(\V,[\cdot,\cdot])\to(\V',[\cdot,\cdot]')$ is a morphism of Leibniz algebras, 
and for all $v_1,v_2\in\V$, we have
\begin{equation} \psi(v_1)\circ'\psi(v_2)\,=\,\varphi\left(v_1\circ v_2\right) . \label{neuescirc} \end{equation}
\end{defi}
\begin{defi} \label{morphismsELA}
The morphisms of an ELA are generated by plain morphisms and $\beta$-transforms. The morphisms of an sELA are only the plain morphisms. 
\end{defi}
With these morphisms, enhanced Leibniz algebras constitute a category, denoted by {\bf{ELeib}}. 
Evidently, the category {\bf sELeib} of symmetric enhanced Leibniz algebras is a subcategory of {\bf{ELeib}}. But in fact, one has more:
\begin{prop} \label{proposition_symmetrization}
The inclusion functor $F \colon \bf{sELeib} \to \bf{ELeib}$ defines an equivalence of
categories. 
\end{prop}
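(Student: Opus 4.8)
The plan is to show that the inclusion functor $F\colon\mathbf{sELeib}\to\mathbf{ELeib}$ is essentially surjective and fully faithful, which by the standard criterion yields an equivalence of categories. Essential surjectivity is the heart of the matter: given an arbitrary ELA $(t\colon\W\to\V,[\cdot,\cdot],\circ)$, I would replace $\circ$ by its symmetrization $\overset{s}{\circ}$ and check that $(t\colon\W\to\V,[\cdot,\cdot],\overset{s}{\circ})$ is an sELA. That it is again an ELA is immediate from the observation (already recorded in the excerpt) that axioms (a)--(d) involve $\circ$ only through $\overset{s}{\circ}$; indeed (b) and (d) refer only to $v\circ v=v\overset{s}{\circ}v$, and (c) is literally phrased with $\overset{s}{\circ}$. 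Since $\overset{s}{\circ}$ is symmetric by construction, the result is an sELA, and it is isomorphic in $\mathbf{ELeib}$ to the original ELA via the $\beta$-transform $\tau_\beta$ with $\beta$ the antisymmetric part of $\circ$, i.e.\ $\beta(v_1,v_2)=\tfrac12(v_1\circ v_2-v_2\circ v_1)$, so that $\overset{s}{\circ}+\beta=\circ$; this $\beta\in\Lambda^2\V^*\otimes\W$ is exactly the data allowed by Definition \ref{betatrafo}, and $\beta$-transforms are invertible morphisms in $\mathbf{ELeib}$ (the inverse being the $(-\beta)$-transform). Hence every object of $\mathbf{ELeib}$ is isomorphic to an object in the image of $F$.

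Next I would prove full faithfulness. Faithfulness is trivial since $F$ is an inclusion on objects and does nothing to morphisms. For fullness, let $X=(t\colon\W\to\V,[\cdot,\cdot],\circ)$ and $X'=(t'\colon\W'\to\V',[\cdot,\cdot]',\circ')$ be two sELAs and let $f\colon FX\to FX'$ be any morphism in $\mathbf{ELeib}$; I must show $f$ is already a plain morphism, hence a morphism in $\mathbf{sELeib}$. By Definition \ref{morphismsELA} a general ELA-morphism is a composite of plain morphisms and $\beta$-transforms. The key point is that a $\beta$-transform $\tau_\beta$ changes $\circ$ by the antisymmetric tensor $\beta$ while keeping $t$, $[\cdot,\cdot]$ fixed, so composing a chain of plain morphisms and $\beta$-transforms, the underlying pair $(\varphi,\psi)$ of linear maps and the compatibility with $t$ and with the Leibniz bracket are governed by the plain pieces, whereas the effect on the $\circ$-product is: (plain transport of $\circ$) plus (a total accumulated antisymmetric tensor $\beta_{\mathrm{tot}}\in\Lambda^2\V^*\otimes\W$). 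Concretely, any morphism $FX\to FX'$ has the form $\psi(v_1)\circ'\psi(v_2)=\varphi(v_1\circ v_2)+\varphi(\beta_{\mathrm{tot}}(v_1,v_2))$ for some antisymmetric $\beta_{\mathrm{tot}}$ (I would verify the bookkeeping that composition of such data stays in this form). Now symmetrize both sides in $v_1,v_2$: since $\circ=\overset{s}{\circ}$ and $\circ'=\overset{s}{\circ}{}'$ are symmetric and $\beta_{\mathrm{tot}}$ is antisymmetric, the symmetric part gives $\psi(v_1)\overset{s}{\circ}{}'\psi(v_2)=\varphi(v_1\circ v_2)$, i.e.\ $\psi(v_1)\circ'\psi(v_2)=\varphi(v_1\circ v_2)$, while the antisymmetric part forces $\varphi\circ\beta_{\mathrm{tot}}=0$ on the nose (the left side $\psi(v_1)\circ'\psi(v_2)-\psi(v_2)\circ'\psi(v_1)=0$). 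Therefore $f$ satisfies \eqref{neuescirc} with no $\beta$-correction, so $f$ is a plain morphism; this proves fullness.

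Assembling the pieces: $F$ is fully faithful and essentially surjective, hence an equivalence of categories. I expect the main obstacle to be the careful bookkeeping in the fullness argument, namely verifying that an arbitrary finite composite of plain morphisms and $\beta$-transforms in $\mathbf{ELeib}$ really does present in the normal form ``one plain morphism followed (or preceded) by a single $\beta$-transform'', so that its action on $\circ$ decomposes cleanly into a symmetric part transported plainly plus an antisymmetric correction. This amounts to checking that the set of $\beta$-transforms is closed under composition and normalized by plain morphisms (i.e.\ $\psi_*\tau_\beta=\tau_{\psi_*\beta}\psi_*$ in the appropriate sense), which is a direct computation using Definition \ref{betatrafo} and Definition \ref{plain}. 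Once that normal form is in hand, the symmetrization/antisymmetrization trick makes the rest automatic, because axioms (a)--(d) and the morphism condition \eqref{neuescirc}, after symmetrizing, see only the symmetric part of the products, exactly as in the objects case.
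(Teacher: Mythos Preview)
Your proof is correct and follows essentially the same approach as the paper: essential surjectivity via symmetrization plus a $\beta$-transform, and full faithfulness by first bringing any composite into the normal form ``plain morphism followed by a single $\beta$-transform'' (the paper records the needed commutation relation $(\varphi,\psi)\circ\tau_\beta=\tau_{\varphi\circ\beta}\circ(\varphi,\psi)$ explicitly) and then observing that between symmetric ELAs the residual $\beta$-part must vanish. Your symmetrization/antisymmetrization trick is just the explicit form of the paper's remark that the $\beta$-transforms ``must cancel, because the resulting ELA is symmetric''.
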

\begin{prf} 
Observe that for a $\beta$-transformation and a plain morphism $(\varphi,\psi)$ one has $(\varphi,\psi)\circ \beta=\beta'\circ(\varphi,\psi)$ with $\beta'=\varphi\circ\beta$. 
The functor $F$ is fully faithful, because in case the morphisms in ${\bf{ELeib}}$ between two symmetric ELAs involve $\beta$-transforms, they can be brought to the end always and then must cancel, because the resulting ELA is symmetric. It is also essentically surjective: every $A \in {\bf{ELeib}}$ is isomorphic to an $S(A)\in {\bf{sELeib}}$ by a (unique) $\beta$-transform. \end{prf}
\begin{rem} \label{remB}
One can also state this proposition by means of the symmetrization functor $S \colon {\bf{ELeib}} \to {\bf{sELeib}}$ which acts on objects according to $S((t\colon\V\to\W,[\cdot,\cdot],\circ))=(t\colon\V\to\W,[\cdot,\cdot],\stackrel{s}{\circ})$ and on morphisms by composition with the appropriate (unique) $\beta$-transforms and their inverses that correspond to the symmetrization. The functor $S$ is a
left-adjoint to $F$, rendering the subcategory {\bf{sELeib}} reflective.  
\end{rem}
Thus, in a first step, we can restrict our considerations always to symmetric ELAs  and 
 forget about  $\beta$-transforms, extending to all ELAs in a second step.  
 \begin{rem} Let $(\varphi,\psi)$ be a morphism of sELAs. Its kernel $t|_{\ker(\varphi)}\colon\ker(\varphi)\to\ker(\psi)$ and image $t|_{\im(\varphi)}\colon\im(\varphi)\to\im(\psi)$ define enhanced Leibniz subalgebras.
\end{rem}

We conclude this section with the definition of quadratic ELAs, i.e.\ those enhanced Leibniz algebras $t \colon \W \to \V$ where $\V$ and $\W$ carry appropriately invariant non-degenerate symmetric bilinear forms. For this we first need to define the action of $\V$ on $\V$ and $\W$. While its action on itself is given by left multiplication, the one on $\W$ is more intricate:
\begin{defi} \label{defVWirkung}The \emph{adjoint representation} on the ELA $t\colon \V \to \W$ is defined by means of the following formulas, where $u,v \in \V$ and $w \in \W$:
\begin{equation}
\rho_l(u) \cdot v := [u,v] \quad,  \qquad \rho_l(u) \cdot w := 2u \stackrel{s}{\circ} t(w)     \, .\label{VWirkung}
\end{equation}
\end{defi}
\begin{prop}
The above operators indeed represent the bracket on $\V$: \begin{equation}[\rho_l(u),\rho_l(v)]=\rho_l([u,v]). \label{Darstellung} \end{equation}
\end{prop}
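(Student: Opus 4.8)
The plan is to verify the operator identity \eqref{Darstellung} separately on the two summands $\V$ and $\W$ of the underlying complex, since both sides are endomorphisms of $\V\oplus\W$ that preserve this decomposition. On $\V$ the claim is essentially a restatement of the left Leibniz identity: for $x\in\V$ one computes $[\rho_l(u),\rho_l(v)]\cdot x=[u,[v,x]]-[v,[u,x]]$, and \eqref{Leibniz} rewrites the first term as $[[u,v],x]+[v,[u,x]]$, so that the difference collapses to $[[u,v],x]=\rho_l([u,v])\cdot x$.

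The substantive part is the action on $w\in\W$. Here I would first record the key observation that, by item (3) of Corollary \ref{corlemma}, one has $t\bigl(\rho_l(v)\cdot w\bigr)=t\bigl(2v\stackrel{s}{\circ}t(w)\bigr)=[v,t(w)]$. Feeding this back into the definition \eqref{VWirkung} gives $\rho_l(u)\cdot\bigl(\rho_l(v)\cdot w\bigr)=2u\stackrel{s}{\circ}t\bigl(\rho_l(v)\cdot w\bigr)=2u\stackrel{s}{\circ}[v,t(w)]$, and symmetrically with $u$ and $v$ exchanged. Hence
\[
[\rho_l(u),\rho_l(v)]\cdot w \;=\; 2\,u\stackrel{s}{\circ}[v,t(w)] \;-\; 2\,v\stackrel{s}{\circ}[u,t(w)].
\]
Now I apply item (2) of Corollary \ref{corlemma}, namely $u\stackrel{s}{\circ}[v,t(w)]=v\stackrel{s}{\circ}[u,t(w)]+t(w)\stackrel{s}{\circ}[u,v]$, so the right-hand side becomes $2\,t(w)\stackrel{s}{\circ}[u,v]=2\,[u,v]\stackrel{s}{\circ}t(w)$, using symmetry of $\stackrel{s}{\circ}$. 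By \eqref{VWirkung} this is precisely $\rho_l([u,v])\cdot w$, which finishes the verification.

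I do not anticipate a genuine obstacle: the whole argument is a two-line computation in each degree, and the only inputs are the Leibniz identity and Corollary \ref{corlemma}, both already available. The one point deserving care is the bookkeeping of factors of $2$ and the placement of arguments around $\stackrel{s}{\circ}$ — it matters that $\stackrel{s}{\circ}$ is symmetric so that $t(w)\stackrel{s}{\circ}[u,v]$ may be rewritten as $[u,v]\stackrel{s}{\circ}t(w)$ to match the normalization in \eqref{VWirkung}. It is also worth remarking that, by Lemma \ref{polarization_lemma}, only the symmetric part of $\circ$ enters all of this, so the representation $\rho_l$ is unaffected by $\beta$-transforms, which is consistent with its role later on.
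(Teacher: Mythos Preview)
Your proof is correct and follows essentially the same route as the paper: the Leibniz identity handles the $\V$-part, and for the $\W$-part you use items (3) and (2) of Corollary \ref{corlemma} in exactly the same way to reduce the commutator to $2\,[u,v]\stackrel{s}{\circ}t(w)$. Your additional remark on $\beta$-invariance is a nice observation but not needed for the statement itself.
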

\begin{prf} For the operation of $\rho_l(u)$ on $\V$, Equation \eqref{Darstellung} follows directly from the Leibniz identity \eqref{Leibniz}. By item (3) of Corollary \ref{corlemma}, one has \begin{equation}
\rho_l(u)(\rho_l(v)(w))\equiv 4 u \stackrel{s}{\circ} t ( v \stackrel{s}{\circ} t(w))=2u \stackrel{s}{\circ} [v,t(w)].\label{eqa}
\end{equation}
Reordering item (2) of the same corollary and multiplying all by a factor of 2, gives
\begin{equation}
 2u \stackrel{s}\circ [v,t(w)] - 2v \stackrel{s}\circ [u,t(w)] = 2[u,v] \stackrel{s}\circ t(w) \, .\label{eqb}
\end{equation}
Combining Equations \eqref{eqa}, \eqref{eqb}, and \eqref{VWirkung}, we obtain Eq.\ \eqref{Darstellung} also in this case.
\end{prf}
\begin{prop} \label{factors} For an ELA $t \colon \W \to \V$, define the Lie algebra $\mathfrak{g}:= \V/\mathrm{im}(t)$. The adjoint representation \ref{defVWirkung}  factors through to a $\mathfrak{g}$-action on $\V$ and $\W$, for which the map $t$ is equivariant. 
\end{prop}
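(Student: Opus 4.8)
The plan is to show two things: first, that both operators in the adjoint representation \eqref{VWirkung} annihilate the ideal $\im(t)$ (acting on the left), so that they descend to well-defined operators of $\mathfrak{g}=\V/\im(t)$; and second, that the descended operators still represent the induced Lie bracket on $\mathfrak{g}$ and that $t$ intertwines the two $\mathfrak{g}$-actions. The representation property \eqref{Darstellung} is already available, so only the factoring and the equivariance need work.

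For the factoring, I would argue as follows. Suppose $u = t(w_0)$ for some $w_0 \in \W$. Acting on $v \in \V$ we get $\rho_l(t(w_0)) \cdot v = [t(w_0),v] = 0$ by Property (a) of Definition \ref{definition_enhanced_Leibniz}. Acting on $w \in \W$ we get $\rho_l(t(w_0)) \cdot w = 2\, t(w_0) \stackrel{s}{\circ} t(w) = 0$ by item (1) of Corollary \ref{corlemma}. Hence $\rho_l$ vanishes on $\im(t)$ in both its actions, so it descends to a linear map $\bar\rho_l \colon \mathfrak{g} \to \End(\V) \times \End(\W)$; combined with \eqref{Darstellung} this $\bar\rho_l$ is a Lie algebra homomorphism, i.e.\ a $\mathfrak{g}$-action on the pair $(\V,\W)$. (One should note that $\V$ and $\W$ here are $\mathfrak{g}$-modules in the naive sense; $\V$ is not the quotient $\mathfrak{g}$ but the original space, and this is fine --- the statement only claims a $\mathfrak{g}$-action on $\V$ and $\W$, not that these descend further.)

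For equivariance of $t$, I must check $t(\rho_l(u) \cdot w) = \rho_l(u) \cdot t(w)$ for all $u \in \V$, $w \in \W$. The left side is $t(2u \stackrel{s}{\circ} t(w))$, the right side is $[u,t(w)]$, and these are equal precisely by item (3) of Corollary \ref{corlemma}. Passing to the quotient, $t$ is then equivariant for the $\mathfrak{g}$-actions. I expect no serious obstacle here; the whole proposition is essentially a bookkeeping consequence of Corollary \ref{corlemma} (items (1) and (3)) together with Property (a) and the already-established \eqref{Darstellung}. The only point requiring a moment's care is making explicit that ``descends to a $\mathfrak{g}$-action'' uses both the vanishing on $\im(t)$ \emph{and} the prior representation identity, and that equivariance of $t$ at the level of $\V$ automatically yields equivariance at the level of $\mathfrak{g}$ since the $\mathfrak{g}$-action is by definition induced from the $\V$-action.
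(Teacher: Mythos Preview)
Your proposal is correct and follows essentially the same route as the paper: Property (a) and Corollary \ref{corlemma}(1) for the vanishing of $\rho_l$ on $\im(t)$, Corollary \ref{corlemma}(3) for the equivariance of $t$, and the already-proved \eqref{Darstellung} for the representation property. The only small point the paper adds that you leave implicit is a one-line justification that $\mathfrak{g}=\V/\im(t)$ is indeed a Lie algebra (via Lemma \ref{im_t_abelian_ideal} and Property (d), which forces the symmetric part of the bracket into $\im(t)$); you may wish to include this for completeness.
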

\begin{prf} $\mathfrak{g}$ is well-defined and a Lie algebra due to Lemma \ref{im_t_abelian_ideal} and the fact that the symmetric part of the Leibniz bracket is modded out in the quotient due to Property (d) of the ELA. Due to Properties (a) and (b) (see also Corollary \ref{corlemma}, (1)), the image of the map $t$ always acts trivially on $\V$ and $\W$. The equivariance of $t$, $\rho_l(u) \cdot t(w) = t(\rho_l(u) \cdot w)$, follows directly from Corollary \ref{corlemma}, (3).
\end{prf}
Now it is straightforward to extend this action to arbitrary tensors on $\V$ and $\W$. This leads to the following:
\begin{defi} \label{defquadratic}
A \emph{quadratic enhanced Leibniz algebra} is an ELA $t \colon \W \to \V$ such that the vector spaces $\W$ and $\V$ are equipped with ad-invariant non-degenerate symmetric bilinear forms $\kappa_\W$ and $\kappa_\V$, i.e.\ for all $u,v \in \V$ and all $w\in \W$ one has
\begin{equation} \kappa_\V([u,v], v)=0\,, \qquad \qquad \kappa_\W(u \stackrel{s}\circ t(w), w)=0 \, .
\label{kappaVW}
\end{equation}
A quadratic ELA is called \emph{positive} in addition, if $\kappa_\W$ and $\kappa_\V$ are scalar products. We will call a Leibniz algebra \emph{quadratic} (resp. \emph{positive}) in case the ELA $0\to{\mathfrak h}$ is quadratic (resp. positive). 
\end{defi}
\begin{rem}
\item[1)] There can be different definitions of a quadratic ELA as generalizations of quadratic Lie algebras. For example, one could in addition require  $\kappa_\V([v,u], v)=0$ for all $u,v \in \V$, which  poses an additional constraint for non-Lie Leibniz algebras. The present definition is the one that is motivated by  mathematical physics resulting from the construction of gauge theories \cite{Str}.
\item[2)] Due to Proposition \ref{factors}, the bilinear forms $\kappa_\W$ and $\kappa_\V$ are in fact $\mathfrak{g}$-invariant.
\item[3)] While the condition \eqref{kappaVW} on $\kappa_\W$ does not depend on an overall prefactor, the prefactor 2 is absolutely necessary in the second Equation \eqref{VWirkung} for the representation property \eqref{Darstellung} to hold true. In particular, in the context of the Lie 2-algebra structure induced by an ELA, we will define the operation $v \ast w := t(w) \circ v$, which, for the case of a symmetric ELA, differs from the above definition by the prefactor only. This operation, on the other hand, will turn out to be an anomalous action, the anomaly of which is related to the 3-bracket of the Lie 2-algebra. (See Section \ref{sec:Lie2} for further details).
\item[4)] Let the Leibniz algebra $\V$ be a Lie algebra. Then, as follows from Lemma \ref{Liebil},  every map $p\colon \W \to \R$  defines an ad-invariant bilinear form $b_\V$ on $\V$ by means of the formula $b_\V(u,v) := p(u\stackrel{s}{\circ} v)$. If $b_\V$ is non-degenerate (or even positive), then it can serve as the $\V$-part of the bilinear forms needed for the definition of a (in the latter case positive) quadratic ELA. Note that in this particular case, $(\V,b_\V)$ by itself is a (positive) quadratic Lie algebra (and, for a positive quadratic ELA, $(\W,\kappa_\W)$ then is a representation of this Lie algebra, equipped with the invariant scalar product $\kappa_\W$).
\end{rem}

\section{Examples and structure theorem}

We start with a simple example of a non-symmetric ELA (and its symmetrization), adapting ideas from the realm of Omni-Lie algebras, see \cite{Wei} and \cite{SheLiu}:
\begin{exa} \label{example1}
Let ${\mathfrak g}$ be a Lie algebra and $M$ be a left ${\mathfrak g}$-module. 
Then $\V:={\mathfrak g}\oplus M$ becomes a Leibniz algebra with the 
(hemisemidirect product) bracket
\begin{equation}[(x,m),(y,n)]\,:=\,([x,y],x\cdot n),\label{omni}\end{equation}
defined for all $x,y \in {\mathfrak g}$ and $m,n \in M$. 
Let $\W:=M$ and introduce a bilinear product on $\V$ with values in $\W$ 
by
$$\circ\,\colon\,\V\times\V\to\W,\,\,\,((x,m),(y,n))\mapsto (x,m)\circ(y,n)=
x\cdot n.$$
With $t\colon\W=M\to\V={\mathfrak g}\oplus M$ the inclusion map 
into the second factor, it is now readily verified that the triple
$(t,[\cdot,\cdot],\circ)$ on the vector spaces $\W$ and $\V$ defines a non-symmetric enhanced 
Leibniz algebra. Symmetrization of the above $\circ$-product leads to the corresponding symmetric ELA. 
\end{exa}
In this example, the $\circ$-product is not necessarily surjective---just add a trivial representation of non-zero dimension to $M$. Thus we cannot apply Corollary \ref{surjcirc} to reduce the above example to a Leibniz algebra directly, although the map $t$ is injective (and we can restrict to a symmetric circle products due to Proposition \ref{proposition_symmetrization}). However, we have the following
\begin{defi} \label{Leibnizcouple}Let $\V$ be a Leibniz algebra and ${\mathfrak i}$ a left-ideal  ideal obeying the inclusions 
\begin{equation}\Sq({\V})\subset{\mathfrak i}\subset Z_L({\V}).\label{inclusions} \end{equation}
We call the pair $\V \supset {\mathfrak i}$ a \emph{Leibniz couple}. 
\end{defi}
\begin{prop} \label{proposition_injective}
Every sELA $(t\colon\W\to\V,[\cdot,\cdot],\circ)$ with injective map $t$ is in one-to-one correspondence with a Leibniz couple $\V \supset {\mathfrak i}$. 
The ideal ${\mathfrak i}$  corresponds to the image of $t$ in the ELA, ${\mathfrak i}= t(\W)\cong \W$. For a general, not necessarily symmetric ELA, the one-to-one correspondence is with $\V\supset {\mathfrak i}$ and  $\beta \in \Lambda^2 \V^*
\otimes \W$.\end{prop}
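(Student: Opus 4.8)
The plan is to exhibit the correspondence explicitly in both directions and check that the two assignments are mutually inverse, handling the symmetric case first and then deducing the general case from Proposition~\ref{proposition_symmetrization}.

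\emph{From an sELA with injective $t$ to a Leibniz couple.} Given $(t\colon\W\to\V,[\cdot,\cdot],\circ)$ with $t$ injective, I would set $\mathfrak{i}:=t(\W)\subseteq\V$. By Lemma~\ref{im_t_abelian_ideal} this is a (two-sided, in particular left-) ideal of $\V$. The inclusion $\mathfrak{i}\subseteq Z_L(\V)$ is Property~(a), while $\Sq(\V)\subseteq\mathfrak{i}$ follows from Property~(d): each square $[v,v]=t(v\circ v)$ lies in $\im(t)$, and $\Sq(\V)$ is their span. So $\V\supset\mathfrak{i}$ is a Leibniz couple, with $\W\cong\mathfrak{i}$ via $t$.

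\emph{From a Leibniz couple to an sELA with injective $t$.} Given $\V\supset\mathfrak{i}$, put $\W:=\mathfrak{i}$, let $t$ be the inclusion (injective by construction), and let $\circ$ be the symmetrization of the Leibniz bracket, $v_1\circ v_2:=\tfrac12([v_1,v_2]+[v_2,v_1])$. This takes values in $\mathfrak{i}$, since $[v_1,v_2]+[v_2,v_1]=[v_1+v_2,v_1+v_2]-[v_1,v_1]-[v_2,v_2]\in\Sq(\V)\subseteq\mathfrak{i}$. Axioms~(a),(b) follow from $\mathfrak{i}\subseteq Z_L(\V)$, axiom~(d) from $\Sq(\V)\subseteq\mathfrak{i}$ together with $\circ$ being symmetric, and axiom~(c) is precisely the polarized Leibniz identity used in the proof of Proposition~\ref{Str}. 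Thus one obtains an sELA with injective $t$.

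\emph{Mutual inverseness.} Running Leibniz couple $\to$ sELA $\to$ Leibniz couple returns $t(\W)=\mathfrak{i}$. The point I expect to require the most care is the other round trip: one must see that on an sELA with \emph{injective} $t$ the product $\circ$ carries no information beyond the underlying Leibniz couple. This is where polarization does the work — Lemma~\ref{polarization_lemma}(c) gives $t(v_1\circ v_2)=\tfrac12([v_1,v_2]+[v_2,v_1])$, so injectivity of $t$ pins down $v_1\circ v_2$ uniquely, and under the identification $\W\cong t(\W)$ it agrees with the symmetrized bracket produced by the second assignment. Hence sELA $\to$ Leibniz couple $\to$ sELA recovers the original sELA up to the canonical isomorphism $\W\cong t(\W)$, which is the sense in which the correspondence is one-to-one.

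\emph{The general case.} For an ELA with injective $t$ that is not necessarily symmetric, Proposition~\ref{proposition_symmetrization} (equivalently Remark~\ref{remB}) says it is obtained from its symmetrization $S(A)$ by a unique $\beta$-transform, with $\beta\in\Lambda^2\V^*\otimes\W$ the antisymmetric part of $\circ$; and a $\beta$-transform leaves $t$, hence its injectivity, unchanged. Combining this with the symmetric case yields the stated bijection between ELAs with injective $t$ and pairs $(\V\supset\mathfrak{i},\,\beta\in\Lambda^2\V^*\otimes\W)$. The only genuine obstacle in the whole argument is the rigidity step above; the rest is bookkeeping with the axioms already in hand.
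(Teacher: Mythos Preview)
Your argument is correct and follows essentially the same route as the paper: both directions hinge on the observation that, for injective $t$, the polarized Property~(d) forces $v_1\circ v_2=\tfrac12\,t^{-1}([v_1,v_2]+[v_2,v_1])$, so the $\circ$-product is determined by the Leibniz couple; the verification of axioms (a)--(d) and the passage to the non-symmetric case via a $\beta$-transform are handled identically. Your presentation is slightly more explicit in separating the two assignments and checking mutual inverseness, but the content matches the paper's proof.
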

\begin{prf} It follows from Property (d) and Lemma \ref{im_t_abelian_ideal} that $\im(t)$ is an ideal in $\V$ which lies between the ideal of squares 
$\Sq(\V)$ and the left center $Z_L(\V)$:
\begin{equation} \Sq(\V) \subset \im(t) \subset Z_L(\V). \label{inclusion}\end{equation}
Property (a) is \emph{equivalent} to the second inclusion, Property (d) implies the first one. We now put  ${\mathfrak i}=\im(t)$.   
  
Since $t$ is injective, Property (d) of an sELA determines the $\circ$-product uniquely: 
$v_1 \circ v_2 = \frac{1}{2} t^{-1}([v_1,v_2] + [v_2,v_1])$. The right hand side is well-defined  due to the first inclusion in \eqref{inclusion}.

Property (c) now follows from
\begin{eqnarray}
u\circ[v,v] &=& \frac{1}{2}t^{-1}\left([u,[v,v]]+[[v,v],u]\right)  \label{uvv}\\
            &=& \frac{1}{2} t^{-1}\left([u,v],v]+[v,[u,v]]\right)  = v\circ[u,v], \nonumber
\end{eqnarray}
where we have used the Leibniz identity and $\Sq(\V)\subset Z_L(\V)$. 
Property (b) follows from Property (a) or from Lemma \ref{im_t_abelian_ideal} for the above bracket. For a non-symmetric $\circ$-product one just adds $\beta(v_1,v_2)$ to the above one, which does not effect any of the defining properties since they are formulated for the symmetric part of this product only. This concludes the proof of the proposition.
\end{prf}
There are two natural choices of such an ideal ${\mathfrak i}$ in every Leibniz algebra $\V$, namely ${\mathfrak i}:=  \Sq(\V)$ and ${\mathfrak i}:=  Z_L(\V)$. It is for the first option, i.e.\ for the Leibniz couple $\V \supset \Sq(\V)$, that the above proposition reduces to Proposition \ref{Str} (for the sELA). 

In Example \ref{example1}, the map $t \colon M \to {\mathfrak g}\oplus M$ is evidently injective; Proposition \ref{proposition_injective} permits to generalize this example in the following way. Let 
$\V \supset {\mathfrak i}$ be an arbitrary Leibniz couple. Then the quotient space ${\mathfrak g} := \V / {\mathfrak i}$ inherits an antisymmetric Leibniz bracket, i.e.\ the structure of a Lie algebra, and one has the following exact sequence of Leibniz algebras:
\begin{equation} 0 \to {\mathfrak i} \to \V \to {\mathfrak g} \to 0 \, ,
\label{sequence}
\end{equation}
where the ideal ${\mathfrak i}$ is even abelian. Note that, despite not indicated by the notation, the Lie algebra ${\mathfrak g}$  depends on the choice of ${\mathfrak i}$ for a given Leibniz algebra $\V$.
So, after the choice of an ideal ${\mathfrak i}$, we may describe the Leibniz algebra $\V$ as an abelian extension of the Lie algebra  ${\mathfrak g}$ by  ${\mathfrak i}$. The Leibniz bracket induces a  ${\mathfrak g}$-action on the vector space  ${\mathfrak i}$: every element $x \in {\mathfrak g}$ corresponds to a class of elements in $\V$ which we denote by $[x] \equiv x+ {\mathfrak i}$. Let $m \in {\mathfrak i}$, then $x \cdot m := [x +  {\mathfrak i},m] = [x,m]$, where on the right hand  side $m$ is viewed as an element in $\V$ due to the corresponding embedding in \eqref{sequence}. The representation property, $x\cdot(y \cdot m) - 
x\cdot(y \cdot m) = [x,y] \cdot m$, follows directly from Equation \eqref{Leibniz}. 

After the choice of an arbitrary splitting of the exact sequence \eqref{sequence} (in terms of vector spaces), one can identify elements of $\V$ with pairs $(m,x)$ as in Example \ref{example1}. Since 
${\mathfrak i}$ is abelian and inside the left center, the Leibniz bracket of two such elements must be of the form
\begin{equation} [(x,m),(y,n)]\,:=\,([x,y],x\cdot n + \alpha(x,y)) \label{sum}
\end{equation}
for some $\alpha \in \V^* \otimes \V^* \otimes {\mathfrak i}$.  Example \ref{example1} now results from the choice $\alpha=0$ and the identification of Proposition \ref{proposition_injective}. It is now a straightforward and known calculation to see that the general bracket \eqref{sum} satisfies the Leibniz property \eqref{Leibniz}, iff $\alpha$ satisfies the following equation
\begin{equation} \label{dalpha}
x \cdot \alpha(y,z) - y \cdot \alpha(x,z)=\alpha([x,y],z) + \alpha(y,[x,z])-\alpha(x,[y,z])\, .\end{equation}

This description of a Leibniz algebras can be formulated more elegantly in cohomological terms. Before recalling the necessary notions below, we present the resulting statement:
\begin{prop} \label{splitLeib}
Every Leibniz couple $\V\supset {\mathfrak i}$ is in one-to-one correspondence with a ${\mathfrak g}$-module $M$ together with a Leibniz ${\mathfrak g}$-cohomology 2-class $[\alpha]$ taking values in the Lie module. Here the Lie algebra ${\mathfrak g}$ is the quotient of $\V$ by the ideal ${\mathfrak i}$, the module $M$ is ${\mathfrak i}$, and the ${\mathfrak g}$-action on $M$ is as defined above, and, after the choice of a splitting of \eqref{sequence} and a representative $\alpha$ of the class $[\alpha]$, the Leibniz product on $\V \cong {\mathfrak g}\oplus {\mathfrak i}$ takes the form of \eqref{sum}.
\end{prop}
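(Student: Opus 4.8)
The plan is to make the correspondence of Proposition~\ref{splitLeib} precise by recognizing that Equation~\eqref{dalpha} is exactly the cocycle condition in the Loday--Pirashvili (Leibniz) cohomology complex of ${\mathfrak g}$ with coefficients in the module $M={\mathfrak i}$, and that the ambiguity in $\alpha$ coming from the choice of a vector-space splitting of \eqref{sequence} is precisely a Leibniz coboundary. So first I would recall the relevant cohomology: since ${\mathfrak g}$ is a Lie algebra one may use either the Leibniz (Loday) complex $CL^\bullet({\mathfrak g},M) = \Hom({\mathfrak g}^{\otimes \bullet}, M)$ with its standard differential, or, because ${\mathfrak i}$ lies in the left center, one can restrict attention to the part of the complex that actually governs hemisemidirect-type extensions; I would state which one is meant (the Loday differential $\mathrm{d}_L$) and then simply observe that \eqref{dalpha} says $\mathrm{d}_L \alpha = 0$, i.e. $\alpha \in ZL^2({\mathfrak g},M)$.

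Next I would spell out the two directions of the bijection. Given a Leibniz couple $\V \supset {\mathfrak i}$: Lemma~\ref{im_t_abelian_ideal}-type reasoning (here the inclusions \eqref{inclusions} directly) shows ${\mathfrak i}$ is an abelian ideal inside $Z_L(\V)$ and that $\Sq(\V)\subseteq{\mathfrak i}$ forces the quotient ${\mathfrak g}=\V/{\mathfrak i}$ to be a Lie algebra, giving the exact sequence \eqref{sequence}; the induced ${\mathfrak g}$-module structure on $M:={\mathfrak i}$ is the one described before the proposition, and its well-definedness (independence of the lift) uses that ${\mathfrak i}$ is an ideal contained in the left center. Choosing a linear splitting produces $\alpha$ as in \eqref{sum}, and the computation already referred to in the text shows $\alpha$ satisfies \eqref{dalpha}. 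Changing the splitting by a linear map $\mu\colon{\mathfrak g}\to{\mathfrak i}$ changes $\alpha$ by $\mathrm{d}_L\mu$, so the class $[\alpha]\in HL^2({\mathfrak g},M)$ is well defined. Conversely, given a ${\mathfrak g}$-module $M$ and a class $[\alpha]$, pick a representative and define $\V:={\mathfrak g}\oplus M$ with bracket \eqref{sum}; the cited calculation shows \eqref{dalpha} is equivalent to the Leibniz identity \eqref{Leibniz}, so $\V$ is a Leibniz algebra, ${\mathfrak i}:=M$ is an abelian ideal, and one checks the inclusions \eqref{inclusions} hold: $M\subseteq Z_L(\V)$ is immediate from \eqref{sum}, and $\Sq(\V)\subseteq M$ holds because $[(x,m),(x,m)]=(0,\,\alpha(x,x))\in M$. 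Finally I would verify that these two assignments are mutually inverse up to the evident isomorphisms (a splitting-independence argument on one side, a "change of representative = change of splitting" argument on the other), which is exactly the statement that $\beta$-transforms do not enter here since everything is purely Leibniz-algebraic.

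The main obstacle, and the only place that needs genuine care rather than bookkeeping, is pinning down \emph{which} cohomology theory and \emph{which} differential appear, and checking that the coboundary ambiguity matches the splitting ambiguity exactly. Because ${\mathfrak g}$ is a Lie algebra but the extension \eqref{sum} is of "hemisemidirect" type (only $x\cdot n$, not $y\cdot m$, enters), one must be sure that Leibniz cohomology $HL^2({\mathfrak g},M)$ — not Chevalley--Eilenberg $H^2({\mathfrak g},M)$ — is the correct receptacle, and indeed \eqref{dalpha} is the Loday cocycle equation and not the antisymmetric Lie one; this is the subtlety the paper is flagging when it contrasts $\mathrm{d}_L$ with the differential $\mathrm{d}$ appearing later in Theorem~\ref{maintheorem}. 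Everything else — exactness of \eqref{sequence}, abelianness of ${\mathfrak i}$, the module axioms, and the equivalence \eqref{dalpha}$\Leftrightarrow$\eqref{Leibniz} — is routine and, as the text notes, essentially standard.
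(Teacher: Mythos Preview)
Your proposal is correct and follows essentially the same approach as the paper: identify \eqref{dalpha} with the Loday cocycle condition $\mathrm{d}_L\alpha=0$ for the antisymmetric module ${\mathfrak i}$, and check that a change of splitting alters $\alpha$ by a Loday coboundary $\mathrm{d}_L\beta$. The paper's own argument is in fact briefer than yours---it states these two observations and leaves the converse direction and the verification of the inclusions \eqref{inclusions} implicit---so your write-up is, if anything, a more complete version of the same proof.
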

\begin{cor} Every sELA with injective $t$ is in one-to-one correspondence with a Lie algebra ${\mathfrak g}$, a ${\mathfrak g}$-module $M$, and an $[\alpha]\in HL^2({\mathfrak g},M)$.
\end{cor}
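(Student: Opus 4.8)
The plan is to read the statement as the composite of the two one-to-one correspondences already in hand, so that essentially nothing new has to be proved.

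First I would invoke Proposition \ref{proposition_injective}: an sELA $(t\colon\W\to\V,[\cdot,\cdot],\circ)$ with injective $t$ is the same datum as a Leibniz couple $\V\supset{\mathfrak i}$, obtained by setting ${\mathfrak i}:=t(\W)$, identifying $\W\cong{\mathfrak i}$ along $t$, and recovering the (necessarily symmetric) product through $v_1\circ v_2=\tfrac12 t^{-1}([v_1,v_2]+[v_2,v_1])$, which is legitimate precisely because $\Sq(\V)\subset{\mathfrak i}$. Then I would invoke Proposition \ref{splitLeib}: a Leibniz couple $\V\supset{\mathfrak i}$ is the same datum as the triple $({\mathfrak g},M,[\alpha])$, where ${\mathfrak g}:=\V/{\mathfrak i}$ is the quotient Lie algebra, $M:={\mathfrak i}$ is the ${\mathfrak g}$-module with action $x\cdot m:=[x,m]$, and $[\alpha]\in HL^2({\mathfrak g},M)$ is the Leibniz cohomology class of the $2$-cochain $\alpha$ appearing in \eqref{sum} after a vector-space splitting of \eqref{sequence}, the cocycle identity being \eqref{dalpha}. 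Composing these two bijections yields the claimed correspondence; the inverse assigns to $({\mathfrak g},M,[\alpha])$ the sELA on $\W={\mathfrak i}=M$, $\V={\mathfrak g}\oplus M$ with bracket \eqref{sum} for a chosen representative $\alpha$, with $t$ the inclusion and $\circ$ the symmetrized bracket.

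The only thing left to verify is that the round trips are mutually inverse and that the construction is well defined, i.e.\ that changing the representative $\alpha$ within $[\alpha]$ or changing the splitting of \eqref{sequence} produces an isomorphic sELA; but this is exactly the content already secured by Proposition \ref{splitLeib}, together with the rigidity of the $\circ$-product under injectivity of $t$ from Proposition \ref{proposition_injective}. I do not expect a real obstacle here---the work is purely bookkeeping---so the only point deserving care is to keep the notions of equivalence on the two sides (isomorphism of sELAs with injective $t$ versus the pair $({\mathfrak g},M)$ together with the cohomology class $[\alpha]$) aligned throughout the chain.
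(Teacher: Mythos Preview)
Your proposal is correct and is exactly what the paper intends: the corollary is stated without proof, as it is the immediate composition of Proposition~\ref{proposition_injective} (sELAs with injective $t$ $\leftrightarrow$ Leibniz couples) with Proposition~\ref{splitLeib} (Leibniz couples $\leftrightarrow$ triples $({\mathfrak g},M,[\alpha])$). Your added remarks on well-definedness and the inverse construction are fine and simply make explicit what the paper leaves implicit.
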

Although we only need a very particular module here, let us recall the general notion of a module over a Leibniz algebra ${\mathfrak h}$: It 
is a vector space $M$ with operators
$\rho_l(x)$ and $\rho_r(x)$ for every $x\in{\mathfrak h}$ such that
we have for all $x,y\in{\mathfrak h}$
$$(LLM)\,\,\,\,\rho_l(x)\circ\rho_l(y)=\rho_l([x,y])+\rho_l(y)\circ\rho_l(x),$$
$$(LML)\,\,\,\,\rho_l(x)\circ\rho_r(y)=\rho_r(y)\circ\rho_l(x)+\rho_r([x,y]),$$
$$(MLL)\,\,\,\,\rho_r([x,y])=\rho_r(y)\circ\rho_r(x)+\rho_l(x)\circ\rho_r(y).$$
These equations are most readily motivated by regarding the bi-adjoint representation of a Leibniz algebra on itself, i.e.\ $M := {\mathfrak h}$, $\rho_l(x) y := [x,y]$, and $\rho_r(x) y := [y,x]$, in which case these three equations follow directly from the Leibniz identity \eqref{Leibniz}, interpreting it three times in a slightly different way.

Given only a vector space $M$ with operators $\rho_l(x)$ for every
$x\in{\mathfrak h}$ such that $(LLM)$ is true, we call $M$ a {\it Lie-module}.
In case $\rho_l(x)=-\rho_r(x)$ for all $x\in{\mathfrak h}$, the module
is called {\it symmetric}. In case $\rho_r(x)=0$ for all $x\in{\mathfrak h}$,
the module is called {\it antisymmetric}. A Lie module may be viewed as an
antisymmetric module by putting $\rho_r(x)=0$ for all $x\in{\mathfrak h}$. (This is the case of interest for Proposition \ref{splitLeib}, with the Leibniz algebra ${\mathfrak h}$ being the Lie algebra ${\mathfrak g}$; in fact, in this way, every module of a Lie algebra becomes a module of this same algebra looked upon as a Leibniz algebra).
It can also be viewed as a symmetric module by setting $\rho_r(x)=-\rho_l(x)$
for all $x\in{\mathfrak h}$.

Given a vector space $E$ with a (not necessarily Lie or Leibniz) bracket $[,]:E\times E\to E$ and 
a vector space $F$ with linear maps $\rho_l,\rho_r:E\to\End(F)$, one may define an operator $\mathrm{D}$ by the following formula. For every linear map $f:E^{\otimes p}\to F$ and elements $x_0,\ldots,x_p\in E$, one defines
\begin{multline}  
\mathrm{D}f(x_0,\ldots,x_p)=\sum_{i=0}^{p-1}(-1)^i\rho_l(x_i)f(x_0,\ldots,\hat{x_i},\ldots,x_p)+
\label{Loday_coboundary}\\
+(-1)^{p-1}\rho_r(x_p)f(x_0,\ldots,x_{p-1})+ \\
+\sum_{1\leq i<j\leq p}(-1)^{i+1}f(x_0,\ldots,\hat{x_i},\ldots,x_{j-1},[x_i,x_j],
x_{j+1},\ldots,x_p).
\end{multline}

The Leibniz cohomology of a Leibniz algebra ${\mathfrak h}$ with values in the
module $M$ is by definition the cohomology of the cochain spaces
$CL^p({\mathfrak h},M):=\Hom({\mathfrak h}^{\otimes p},M)$ for $p\geq 0$ with
the {\it Loday coboundary operator} $\mathrm{D}$, which is then denoted $\mathrm{d}_{L}\colon CL^p({\mathfrak h},M)\to CL^{p+1}({\mathfrak h},M)$ in this context.  

The Leibniz identity {\it and} the above Equations $(LLM)$, $(LML)$ and $(MLL)$ imply
that $\mathrm{d}_{L}^2=0$. For references, see \cite{Lod} (but Loday works with right
Leibniz algebras) and \cite{Cov} (in the left setting). We remark in parenthesis that even if the Leibniz algebra is a Lie algebra, the above cohomology theory is in an essential way different from the Chevalley-Eilenberg cohomology of that Lie algebra. 


Now we are ready to understand the validity of the above proposition: The constraint \eqref{dalpha} now is readily seen to take the form $$ \mathrm{d}_{L}  \alpha= 0 \, , $$ where the cochain $\alpha$ takes values in the antisymmetric module ${\mathfrak i}$. On the other hand, changing the splitting leading to the identification of $\V$ with ${\mathfrak g} \oplus M$, adds a term of the form $\beta([x,y])-x\cdot\beta(y)= (\mathrm{d}_{L} \beta) (x,y)$ for some 1-chain $\beta$ to $\alpha(x,y)$. 

Replacing \eqref{omni} by \eqref{sum} for an arbitrary Loday ${\mathfrak g}$-2-cocycle  $\alpha$ in Example \ref{example1}, now gives a generalization of this example and, simultaneously, provides the general sELAs that are associated canonically to a Leibniz couple. 

There is another, less evident, but purely Lie theoretic description of a Leibniz couple. Define ${\mathfrak g}$ as in \eqref{sequence}. Then one has (for the proof we refer to the cited works):
\begin{prop}\label{Intertwiner} \cite{ABRW,KS18} Every Leibniz couple $\V \supset {\mathfrak i}$ is in one-to-one correspondence with a surjective equivariant map (intertwiner) $\theta \colon \V \to {\mathfrak g}$ between a ${\mathfrak g}$-module $\V$ and the adjoint representation of ${\mathfrak g}$ on itself. 
\end{prop}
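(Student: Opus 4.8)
The plan is to establish the bijection $\{\text{Leibniz couples } \V \supset \mathfrak{i}\} \leftrightarrow \{\text{surjective equivariant } \theta \colon \V \to \mathfrak{g}\}$ by producing explicit maps in both directions and checking that they are mutually inverse. Given a Leibniz couple $\V \supset \mathfrak{i}$, set $\mathfrak{g} := \V/\mathfrak{i}$; by Definition \ref{Leibnizcouple} this is a Lie algebra (the bracket is antisymmetric because $\Sq(\V) \subset \mathfrak{i}$). The left multiplication of $\V$ on itself descends, via Lemma \ref{im_t_abelian_ideal}-type reasoning (here $\mathfrak{i} \subset Z_L(\V)$ ensures $\mathfrak{i}$ acts trivially on the left), to a $\mathfrak{g}$-action $\rho_l$ on $\V$; this is exactly the bi-adjoint-type construction of a Leibniz module restricted to its left part, and $(LLM)$ follows from \eqref{Leibniz}. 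The quotient map $\theta \colon \V \to \mathfrak{g}$ is then surjective, and it is equivariant for this $\mathfrak{g}$-action on the source and the adjoint action on the target precisely because $\theta([u,v]) = [\theta(u),\theta(v)] = \ad_{\theta(u)}\theta(v)$, which is the statement that $\theta$ intertwines $\rho_l(u)$ with $\ad_{\theta(u)}$.

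Conversely, given a surjective equivariant $\theta \colon \V \to \mathfrak{g}$ where $\V$ is a $\mathfrak{g}$-module and $\mathfrak{g}$ acts on itself adjointly, I would define a bracket on $\V$ by $[u,v] := \theta(u) \cdot v$ (the module action). The Leibniz identity \eqref{Leibniz} for this bracket follows from the module axiom $(LLM)$ for $\V$ together with equivariance of $\theta$: one computes $[u,[v,w]] = \theta(u)\cdot(\theta(v)\cdot w)$ and $[[u,v],w] = \theta(\theta(u)\cdot v)\cdot w = (\ad_{\theta(u)}\theta(v))\cdot w = [\theta(u),\theta(v)]\cdot w$ by equivariance, so the three terms match up exactly by the module identity. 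Then $\mathfrak{i} := \ker\theta$ is the candidate ideal: it is a two-sided ideal since $[u,v] = \theta(u)\cdot v$ vanishes whenever $\theta(u)=0$ (left case) and $\theta([u,v])=[\theta(u),\theta(v)]=0$ when $\theta(v)$... wait—more carefully, $[v,u]=\theta(v)\cdot u$ and $\theta(\theta(v)\cdot u) = \ad_{\theta(v)}\theta(u) = 0$ when $u \in \ker\theta$, using equivariance again. So $\mathfrak{i}$ is a left ideal contained in $Z_L(\V)$ (left multiplication by $\mathfrak{i}$ is zero), and $\Sq(\V) \subset \mathfrak{i}$ because $[v,v] = \theta(v)\cdot v$ maps under $\theta$ to $\ad_{\theta(v)}\theta(v) = 0$; hence the inclusions \eqref{inclusions} hold and $\V \supset \mathfrak{i}$ is a Leibniz couple. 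Finally $\V/\mathfrak{i} \cong \mathfrak{g}$ via the map induced by $\theta$, so the two constructions are inverse to each other.

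The main subtlety — and the step I expect to require the most care — is checking that the two inclusions $\Sq(\V)\subset\mathfrak{i}\subset Z_L(\V)$ come out exactly, neither too large nor too small, and that the $\mathfrak{g}$-module structure on the quotient side is forced to be the one induced by left multiplication (so that no extra data is hidden). In one direction this is the observation that a surjective equivariant $\theta$ cannot have kernel strictly smaller than $\Sq(\V)$ would be a problem only if $\Sq(\V) \not\subset \ker\theta$, which we ruled out above; in the other direction one must verify that starting from $\V\supset\mathfrak{i}$, forming $\theta$, and reconstructing the bracket via $\theta(u)\cdot v$ returns the original Leibniz bracket — this uses $\mathfrak{i}\subset Z_L(\V)$ so that $[u,v]$ depends only on the class $\theta(u)$. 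Since the proposition is attributed to \cite{ABRW,KS18}, I would present this as a sketch and refer to those sources for the full verification of naturality and the bookkeeping of the module axioms, which is otherwise routine.

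\begin{prf}
See \cite{ABRW,KS18}. The correspondence sends a Leibniz couple $\V \supset \mathfrak{i}$ to the quotient map $\theta \colon \V \to \mathfrak{g} := \V/\mathfrak{i}$, where $\V$ is regarded as a $\mathfrak{g}$-module via left multiplication (well-defined since $\mathfrak{i} \subset Z_L(\V)$) and $\mathfrak{g}$ is a Lie algebra since $\Sq(\V) \subset \mathfrak{i}$; equivariance of $\theta$ for the adjoint action on $\mathfrak{g}$ is the identity $\theta([u,v]) = [\theta(u),\theta(v)]$. Conversely, a surjective equivariant $\theta \colon \V \to \mathfrak{g}$ yields a bracket $[u,v] := \theta(u) \cdot v$ on $\V$, whose Leibniz property follows from the module axiom $(LLM)$ and equivariance of $\theta$; one checks that $\ker\theta$ is then an ideal satisfying $\Sq(\V) \subset \ker\theta \subset Z_L(\V)$, and that the two constructions are mutually inverse.
\end{prf}
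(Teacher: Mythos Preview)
Your proposal is correct and in fact goes beyond what the paper does: the paper simply writes ``for the proof we refer to the cited works'' and gives no argument of its own, noting only in the subsequent remark the formula $[u,v]:=\theta(u)\cdot v$ for the reverse direction. Your sketch supplies exactly this construction together with the verification of the inclusions \eqref{inclusions} and the mutual inverse property, so it is entirely consistent with (and more explicit than) the paper's treatment.
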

\begin{rem} \item[1)] There are two main differences to the description mentioned before in Proposition \ref{splitLeib}: First, now the Leibniz algebra is defined on the ${\mathfrak g}$-module itself---it is defined by means of $[u,v] := \theta(u) \cdot v$---and, second, at the place  of the cohomology class  $[\alpha]$, now one has the Lie algebra morphism $\theta \colon \V \to {\mathfrak g}=\V/{\mathfrak i}$ instead. 

\item[2)] The data of a not-necessarily surjective intertwiner $\theta \colon \V \to {\mathfrak g}$ were called \emph{an augmented Leibniz algebra} in \cite{ABRW} and identified with the \emph{embedding tensor} \cite{Samtlebenetal} in \cite{KS18}.
\end{rem}

\begin{exa}
Let ${\mathfrak g}$ be a Lie algebra, $M$ a ${\mathfrak g}$-representation, and $\theta \colon M \to {\mathfrak g}$ a not necessarily surjective, ${\mathfrak g}$-equivariant map. Then $t\colon\ker(\theta)\hookrightarrow M$, together with $[m,n] := \theta(m) \cdot n$ and $m \circ n := \frac{1}{2} \theta(m) \cdot n + \frac{1}{2} \theta(n) \cdot m$
defines an enhanced Leibniz algebra. Here one notices that indeed $\theta (m \circ n)= \frac{1}{2} [\theta(m),\theta(n)]_{\mathfrak g}+ \frac{1}{2} [\theta(n),\theta(m)]_{\mathfrak g}=0$. 
\end{exa}
Conversely, we can state that
\begin{cor} Every ELA, even if not symmetric and $t\colon \W \to \V$ not injective, defines a surjective intertwiner $\theta \colon \V \to {\mathfrak g}$ of ${\mathfrak g}$-modules, where ${\mathfrak g}$ is the Lie algebra obtained by quotienting the Leibniz algebra $\V$ by $t(\W)$. 
\end{cor}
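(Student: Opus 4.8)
The plan is to build on Proposition \ref{factors} and the preceding discussion, reducing the general statement to one about an ELA whose underlying Leibniz algebra carries the $\mathfrak g$-module structure coming from the adjoint representation. Recall that for any ELA $t\colon\W\to\V$ we have already constructed the Lie algebra $\mathfrak g = \V/\im(t)$ and shown in Proposition \ref{factors} that the adjoint representation $\rho_l$ descends to a $\mathfrak g$-action on $\V$, for which $t$ is equivariant. So the only genuinely new point to verify is that the canonical projection $\theta\colon\V\to\mathfrak g$ is (i) surjective, (ii) a $\mathfrak g$-module map when $\mathfrak g$ carries its adjoint representation, and (iii) compatible with the bracket in the sense $[u,v]=\theta(u)\cdot v$, so that $\theta$ really is an intertwiner in the sense of Proposition \ref{Intertwiner} (equivalently, an augmented Leibniz algebra in the sense of the Remark following it).

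The key steps, in order: First, surjectivity of $\theta\colon\V\to\V/\im(t)=\mathfrak g$ is immediate since it is a quotient map. Second, I would check that $\theta$ intertwines the $\mathfrak g$-actions. On the source, $\mathfrak g$ acts on $\V$ by $[x]\cdot v = [u,v]$ for any lift $u$ of $x$ (well-defined by Proposition \ref{factors}); on the target, $\mathfrak g$ acts on itself by its Lie bracket. Then $\theta([x]\cdot v) = \theta([u,v]) = [u,v] + \im(t) = [\,\theta(u),\theta(v)\,]_{\mathfrak g} = [x]\cdot\theta(v)$, using that the Lie bracket on $\mathfrak g$ is by definition the (anti)symmetrized Leibniz bracket modulo $\im(t)$ and that $[u,v]$ and its symmetrization differ by an element of $\Sq(\V)\subset\im(t)$ by Property (d). Third, the defining relation $[u,v]=\theta(u)\cdot v$ holds because, by construction of the $\mathfrak g$-action on $\V$, $\theta(u)\cdot v = [u,v]$ for any representative $u$ — this is just unwinding the definition in Proposition \ref{factors}. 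Hence the pair $(\V,\theta)$ with $\V$ viewed as the $\mathfrak g$-module and $\theta$ the equivariant surjection onto the adjoint representation is exactly the data of Proposition \ref{Intertwiner}.

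There is essentially no serious obstacle here: the corollary is a repackaging of Proposition \ref{factors} together with Proposition \ref{Intertwiner}, and the one subtle point — that one must pass from the Leibniz bracket to its antisymmetrization to land in the \emph{Lie} algebra $\mathfrak g$ — is already handled by Property (d), since $[v,v]=t(v\circ v)\in\im(t)$ forces the symmetric part of the bracket to vanish in the quotient. One should also note that $\beta$-transforms and non-injectivity of $t$ play no role: neither the Leibniz bracket on $\V$ nor the ideal $\im(t)$ nor the induced $\mathfrak g$-action depends on the $\circ$-product beyond what enters through Property (d), which only sees the symmetric part $\stackrel{s}{\circ}$ and, via $\Sq(\V)\subset\im(t)$, not even that choice. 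The mild care needed is only to phrase the statement as ``$\V$ is a $\mathfrak g$-module and $\theta$ is an equivariant surjection onto $\mathfrak g_{\mathrm{ad}}$,'' matching the formulation of Proposition \ref{Intertwiner}, rather than claiming any injectivity.
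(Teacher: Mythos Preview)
Your proposal is correct and matches the paper's intended reasoning: the paper states this corollary without proof, treating it as immediate from the fact that every ELA yields the Leibniz couple $\V\supset\im(t)$ (by \eqref{inclusion}) together with Proposition~\ref{Intertwiner}. Your write-up simply unpacks the content of Proposition~\ref{Intertwiner} explicitly---surjectivity of the quotient map, equivariance via Proposition~\ref{factors}, and the relation $[u,v]=\theta(u)\cdot v$---which is a reasonable thing to do given that Proposition~\ref{Intertwiner} is only cited from \cite{ABRW,KS18} rather than proved in the paper.
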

We now provide an example where $t$ is not injective. It is very explicit and adapted from \cite{GruStr} p.67.
\begin{exa}\label{exaGru}
 Let $\V$ be the Leibniz algebra 
$\End(\R^2)$ with the (non-Lie) bracket 
$$[X,Y]:=P(X)Y-YP(X), $$
for matrices $X=\left(\begin{array}{cc} x_{11} & x_{12} \\ x_{21} & x_{22} \end{array}\right)$
and $Y=\left(\begin{array}{cc} y_{11} & y_{12} \\ y_{21} & y_{22} \end{array}\right)$, 
where $P$ is the projection defined by
$$P\left(\begin{array}{cc} x_{11} & x_{12} \\ x_{21} & x_{22} \end{array}\right)=
\left(\begin{array}{cc} x_{11} & 0 \\ 0 & 0 \end{array}\right).$$
An explicit calculation then yields the following Leibniz bracket
\begin{equation}  \left[\left(\begin{array}{cc} x_{11} & x_{12} \\ x_{21} & x_{22} \end{array}\right),\left(\begin{array}{cc} y_{11} & y_{12} \\ y_{21} & y_{22} \end{array}\right)\right]= \left(\begin{array}{cc} 0 & x_{11} y_{12} \\ -x_{11}y_{21} & 0\end{array}\right)\, . \label{exLeib}
\end{equation}
Note that this bracket is a special case of the bracket \cite{LodPir}, Example (1.2) (b), where one has an endomorphism $P$ of an associative algebra $A$ which satisfies for all $X,Y\in A$
$$P(P(X)Y)=P(XP(Y))=P(X)P(Y),$$
two identities, which one verifies easily to hold true in our case here.
The squares $\Sq(\V)$ are the sums of matrices of the form
$$\left(\begin{array}{cc} 0 & x_{11}x_{12} \\ -x_{11}x_{21} & 0 \end{array}\right).$$  
One defines the circle product to be 
\begin{equation}
X\circ Y:=\left(\begin{array}{c} x_{11}y_{12}+x_{12}y_{11}  \\  -x_{11}y_{21}-x_{21}y_{11} \\
x_{22}y_{22} \end{array}\right)\in\R^3=:\W. \label{excirc}\end{equation}
The linear map $t$ is defined by
\begin{equation}t\left(\begin{array}{c} a \\ b \\ c  \end{array}\right):=\frac{1}{2}\,\left(\begin{array}{cc} 0 & a \\ b & 0 \end{array}\right).\label{ext}\end{equation}
Property (a) holds, because $P(\im(t))=0$. Property (b) holds, because 
$$\left(\begin{array}{cc} 0 & a \\ b & 0 \end{array}\right)\circ\left(\begin{array}{cc} 0 & a \\ b & 0 \end{array}\right)=0.$$
By construction, we have $$
t(X\circ X)=\left(\begin{array}{cc} 0 & x_{11}x_{12} \\ -x_{11}x_{21} & 0 \end{array}\right)=[X,X], $$
which proves property (d). Property (c), finally, holds because, on the one hand,
$$\left(\begin{array}{c} x_{11}y_{11}y_{12} \\ x_{11}y_{11}y_{21} \\ 0  \end{array}\right)=\left(\begin{array}{cc} x_{11} & x_{12} \\ x_{21} & x_{22} \end{array}\right)\circ\left(\begin{array}{cc} 0 & y_{11}y_{12} \\ -y_{11}y_{21} & 0 \end{array}\right),$$
and, on the other hand,
$$\left(\begin{array}{cc} y_{11} & y_{12} \\ y_{21} & y_{22} \end{array}\right)\circ\left(\begin{array}{cc} 0 & x_{11}y_{12} \\ -x_{11}y_{21} & 0 \end{array}\right)=\left(\begin{array}{c} x_{11}y_{11}y_{12} \\ x_{11}y_{11}y_{21} \\ 0  \end{array}\right).$$
In this example, $\ker(t)$ is $1$-dimensional. 
\end{exa}

We now turn to the understanding the structure of a general (symmetric) enhanced Leibniz algebra. We first recall the following exact sequence of vector spaces
\begin{equation} 0 \to \ker (t) \to \W \stackrel{t}{\to} \V \to {\mathfrak g} \to 0 \, , \label{seq0}
\end{equation}
which we can also interpret as exact sequence of Leibniz algebras, equipping $\ker(t)$ and $\W$ with an abelian bracket and ${\mathfrak g}:=\V/\im(t)$ with its canonical Lie algebra structure;  $t$  is  a morphism of Leibniz algebras due to Lemma \ref{im_t_abelian_ideal}. Let us split this sequence into two short exact ones:
\begin{equation} 0 \to \ker (t) \to \W \to \im(t) \to 0 \,\label{seq2}
\end{equation}
and 
\begin{equation} 0 \to \im (t) \to \V \to {\mathfrak g} \to 0 \, . \label{seq1}
\end{equation}
The second sequence is completely described in terms of the Leibniz couple $\V \supset \im(t)$ and, after the choice of a splitting, completely understood in terms of Proposition \ref{splitLeib}; in fact, the action of ${\mathfrak g}$ on $\im(t)$ can be described more explicitly than for a general ideal ${\mathfrak i}$ in this case: $\im(t)$ becomes a Lie module of $\V$ by means of $v \cdot t(w) := [v,t(w)]$, but this factors through to a ${\mathfrak g}$-module since,  due to \eqref{inclusion}, $v\in \im(t) \subset Z_L(\V)\subset \V$ acts trivially.

Thus, it remains to understand what the data and the axioms of an sELA add to the information of the Leibniz couple $\V \supset \im(t)$, i.e., alternatively, in addition to the information contained in \eqref{seq1}. To understand this better, we now choose a splitting
\begin{equation} \sigma \colon \im(t) \to \W \label{sigma}
\end{equation}
of the sequence \eqref{seq2}. We now establish the following
\begin{lem} \label{lemma_sigma} 
Given a map $t \colon \W \to \V$ with $\V$ a Leibniz algebra, such that 
$\V \supset \im(t)$ forms a Leibniz couple, every choice of splitting \eqref{sigma} defines a canonical $\circ$-product on $\V$ with values in $\W$ by means of the formula
\begin{equation} u \circ_\sigma v := \frac{1}{2} \sigma ([u,v]+[v,u])\, , \label{circs}
\end{equation}
turning $(t \colon \W \to \V, [\cdot, \cdot], \circ_\sigma)$ into an sELA.
\end{lem}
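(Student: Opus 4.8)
The plan is to check directly that $\circ_\sigma$ is a well-defined bilinear map and that the four axioms (a)--(d) of Definition \ref{definition_enhanced_Leibniz} hold, all of them flowing from the two inclusions $\Sq(\V)\subseteq\im(t)\subseteq Z_L(\V)$ that are built into the notion of a Leibniz couple, together with the defining property $t\circ\sigma=\id_{\im(t)}$ of the splitting and the Leibniz identity \eqref{Leibniz}.

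First I would note that \eqref{circs} makes sense: for $u,v\in\V$ one has $[u,v]+[v,u]=[u+v,u+v]-[u,u]-[v,v]\in\Sq(\V)\subseteq\im(t)$, so $\sigma$ may legitimately be applied, and bilinearity of $\circ_\sigma$ is immediate from linearity of $\sigma$ and bilinearity of the Leibniz bracket. Since the argument $[u,v]+[v,u]$ is invariant under $u\leftrightarrow v$, the product $\circ_\sigma$ is symmetric, hence $\stackrel{s}{\circ_\sigma}=\circ_\sigma$ and it suffices to verify the unpolarized forms of (a)--(d).

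Axiom (a), $[t(w),v]=0$, is just the inclusion $\im(t)\subseteq Z_L(\V)$ and involves neither $\circ_\sigma$ nor anything new. For (b), $t(w)\circ_\sigma t(w)=\sigma([t(w),t(w)])$, and $[t(w),t(w)]=0$ by (a), so $t(w)\circ_\sigma t(w)=0$. For (d), $t(v\circ_\sigma v)=t(\sigma([v,v]))$; since $[v,v]\in\Sq(\V)\subseteq\im(t)$ and $\sigma$ is a section of $t\colon\W\to\im(t)$, this equals $[v,v]$, as required. The only axiom needing a genuine (though short) computation is (c), $u\circ_\sigma[v,v]=v\circ_\sigma[u,v]$: the left side is $\tfrac12\sigma\bigl([u,[v,v]]+[[v,v],u]\bigr)$, whose second term vanishes because $[v,v]\in\Sq(\V)\subseteq Z_L(\V)$, while the right side is $\tfrac12\sigma\bigl([v,[u,v]]+[[u,v],v]\bigr)$; applying $\sigma$ to the Leibniz identity \eqref{Leibniz} with $v_1=u$, $v_2=v_3=v$, i.e.\ to $[u,[v,v]]=[[u,v],v]+[v,[u,v]]$, identifies the two sides. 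This is exactly the manipulation already carried out in the proof of Proposition \ref{Str}, now pushed forward through $\sigma$. I do not expect any real obstacle here; the substance of the statement is merely that a section $\sigma$ of \eqref{seq2} provides precisely the device needed to "invert $t$" on the combinations of brackets that occur, all of which land in $\Sq(\V)$ by construction, so that Lemma \ref{lemma_sigma} is the natural extension of Proposition \ref{proposition_injective} to the case of non-injective $t$.
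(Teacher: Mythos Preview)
Your proof is correct and follows essentially the same route as the paper's: well-definedness from $\Sq(\V)\subseteq\im(t)$, axioms (a), (b), (d) from the Leibniz-couple inclusions and $t\circ\sigma=\id_{\im(t)}$, and axiom (c) via the Leibniz identity specialized to $v_1=u$, $v_2=v_3=v$. The paper phrases the last step as ``replace $t^{-1}$ by $\sigma$ in \eqref{uvv}'' rather than citing Proposition~\ref{Str}, but it is the same computation.
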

\begin{proof}We first observe that \eqref{circs} is well-defined due to the first inclusion \eqref{inclusion}. It remains to check the axioms. Property (b) holds true due to the second inclusion \eqref{inclusion}, Property (d) due to the fact that $\sigma$ succeeded by $t$ is the identity map. It thus remains to check Property (c), which follows  by replacing $t^{-1}$ in \eqref{uvv} by $\sigma$.
\end{proof}
For a  given map $t \colon \W \to \V$ with $\V \supset \im(t)$  a Leibniz couple and the choice of a splitting \eqref{sigma}, all possible sELAs compatible with these can be parametrized by  the difference of the $\circ$-product from the canonical one in Equation \eqref{circs} above,
\begin{equation} \Delta(u,v) := u \circ v - u \circ_\sigma v \, .\label{Delta}
\end{equation}
Property (d) of an ELA now shows that $\Delta(u,v)$ lies in the kernel of the map $t$; thus, restricting to sELAs for a moment for simplicity, one has
\begin{equation}  \label{Deltaspace}
\Delta \in S^2 \V^* \otimes \ker(t) \, .
\end{equation}
We recall from the discussion following \eqref{seq1}, that $\ker(t) \subset \W$ is a trivial $\V$-module, $v \cdot w \equiv [v,t(w)]=0$ for all $w \in  \ker(t)$. Analysis of the constraints and ambiguities on $\Delta$ now leads to the following definition
\begin{Defi} \label{defd}Let $\V$ be a Leibniz algebra. We then define the following short complex \begin{equation} 0 \stackrel{\mathrm{d}}{\longrightarrow} \V^*   \stackrel{\mathrm{d}}{\longrightarrow} S^2\V^*  \stackrel{\mathrm{d}}{\longrightarrow} \V^* \otimes S^2\V^*  \, , \label{newcomplex}
\end{equation}
where for a 1-chain $\delta$, 
\begin{equation} \label{ddelta}\mathrm{d} \delta(u,v) := \tfrac{1}{2}\delta([u,v]) + \tfrac{1}{2}\delta([v,u])
\end{equation}
and for a 2-chain $\Delta$,
\begin{eqnarray} \mathrm{d} \Delta(v_0,v_1,v_2) \nonumber&:= &\Delta([v_0,v_1],v_2) +\Delta(v_1,[v_0,v_2])-\Delta(v_0,[v_1,v_2]) \nonumber \\ & & 
\!\!\!\!\!-\Delta(v_0,[v_2,v_1])\, . \label{newd}
\end{eqnarray} 
\end{Defi}
\begin{rem} \label{remd}
\item[1)] If we introduce the notation $[u,v]_+ := \tfrac{1}{2}[u,v] +  \tfrac{1}{2}[v,u]$ for the symmetric part of the Leibniz bracket, then $\mathrm{d} \delta(u,v) = \delta([u,v]_+)$ and $\mathrm{d} \Delta(v_0,v_1,v_2) =\Delta([v_0,v_1],v_2) +\Delta(v_1,[v_0,v_2])-2\Delta(v_0,[v_1,v_2]_+)$.\item[2)] One may verify by a direct calculation that $\mathrm{d}^2 = 0$, so that \eqref{newcomplex} indeed defines a complex and the cohomologies $H_{\mathrm{d}}^1(\V)$ and 
$H_{\mathrm{d}}^2(\V)$ are well-defined.
\item[3)] 
The first line in the definition \eqref{newd} of $\mathrm{d}$ coincides with the result of the  Loday differential \eqref{Loday_coboundary}; the last term is needed, however, since for 1-chains $\delta$, one has $\mathrm{d} \delta = {\cal S}(\mathrm{d}_L \delta)$, where ${\cal S}$ is the symmetrization operator, but $\mathrm{d}_L {\cal S} \mathrm{d}_L \neq 0$. In addition, how we have presented it here, the last term \eqref{newd} is needed to make $\mathrm{d} \Delta$ symmetric in the exchange of $v_1$ and $v_2$. 
\item[4)] The above coboundary operator has a general extension and meaning. Let $\V$ be a vector space and $\V[1]$ a copy  shifted to degree -1 and denote by $e_a$ a basis of  
$\V[1]$. Denote by $L_\bullet$  the free graded Lie algebra of this odd vector space. In degree -2, for example, it is generated by elements of the form $[e_a,e_b]=e_a \otimes e_b + e_b \otimes e_a$; the plus sign is a result of the fact that elements in $\V[1]$ are odd. In degree -3 it is generated by elements of the form $[[e_a,e_b],e_c]$. Thus $L_{-1}= \V[1]$, $L_{-2}= (S^2\V)[2]$, and $L_3$ can be identified with a $\Gamma$-shaped Young diagram, which in turn is a sub-vector space of $S^2\V \otimes \V$ (shifted to degree -3). Now let  $\V$ be a Leibniz algebra. Then according to \cite{KS18}, $L_\bullet$ is equipped canonically with a Lie infinity structure (an $L_\infty$-structure concentrated in negative degrees). Together with its 1-bracket $l_1$, $L_\bullet$ thus turns into a complex. The above coboundary operator $\mathrm{d}$ is the dual of $l_1$, $\mathrm{d}=(l_1)^*$.\footnote{T.S.\ is grateful to Alexei Kotov for verifying this together with him.}
\end{rem}
Now we are ready to formulate the main structure theorem of this paper.
\begin{theo} 
\label{maintheorem}
Every symmetric enhanced Leibniz algebra $(t \colon \W \to \V, [\cdot , \cdot ], \circ)$ is in one-to-one correspondence with a Leibniz couple $\V \supset {\mathfrak i}$ and a cohomology class $[\Delta] \in H^2_{{\mathrm d}}(\V) \otimes \U$ for some vector space $\U$ satisfying 
$\Delta\vert_{S^2{\mathfrak i}}=0$. 

These data are obtained from the given sELA by taking ${\mathfrak i}:=\im(t)$, $\U:=\ker(t)$, and defining $[\Delta]$ by means of \eqref{Delta} up to the choice of a splitting \eqref{sigma}. 

Conversely, the original sELA is isomorphic to the following one: Take $\W := \U \oplus  {\mathfrak i}$, 
equipped with the obvious maps ${\mathfrak i} \stackrel{\sigma}{\rightarrow}  \U \oplus  {\mathfrak i}  \stackrel{\mathrm{pr}}{\rightarrow}  {\mathfrak i} \subset \V$, let $t$ coincide with $\mathrm{pr}$, viewed as a map into $\V$, choose a representative cocycle $\Delta$ of the above class, and define $u \circ v := \tfrac{1}{2}\sigma([u,v]+[v,u]) + \Delta(u,v)$. 

Every general ELA differs from the above symmetrized one by the addition of $\beta(u,v)$ to the previous $u \circ v$ for a unique $\beta\in \Lambda^2 \V^* \otimes \W$.
\end{theo}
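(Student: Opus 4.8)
The plan is to take Lemma~\ref{lemma_sigma} as the backbone. Fix the Leibniz couple $\V\supset{\mathfrak i}$, a linear map $t\colon\W\to\V$ with $\im(t)={\mathfrak i}$, and a splitting $\sigma$ of \eqref{seq2}; then $\circ_\sigma$ of \eqref{circs} is one $\circ$-product making the data an sELA, and \emph{any} other symmetric candidate product has the form $\circ_\sigma+\Delta$ with $\Delta\in S^2\V^*\otimes\W$. First I would check that the sELA axioms cut this affine space down exactly as claimed. Axiom~(a) is automatic since ${\mathfrak i}\subseteq Z_L(\V)$ by \eqref{inclusion}. Axiom~(d) holds for $\circ_\sigma$ because $t\circ\sigma=\mathrm{id}$ on ${\mathfrak i}\supseteq\Sq(\V)$, so it holds for $\circ_\sigma+\Delta$ iff $t(\Delta(u,u))=0$ for all $u$, i.e.\ (polarizing, $\mathrm{char}\neq2$) iff $\Delta$ takes values in $\ker(t)$; this is \eqref{Deltaspace}. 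Axiom~(b) holds for $\circ_\sigma$ since ${\mathfrak i}$ is abelian, hence for $\circ_\sigma+\Delta$ iff $\Delta(t(w),t(w))=0$, i.e.\ iff $\Delta\vert_{S^2{\mathfrak i}}=0$.

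The heart of the argument is Axiom~(c). I would note that the polarized form of~(c) --- which by Lemma~\ref{polarization_lemma}(b) (with $\stackrel{s}{\circ}=\circ$) reads $v_1\circ[v_0,v_2]+v_2\circ[v_0,v_1]-v_0\circ([v_1,v_2]+[v_2,v_1])=0$ --- is equivalent to~(c) in $\mathrm{char}\neq2$ and is \emph{linear} in the product, and that when the product there is replaced by a symmetric $S^2\V^*\otimes\ker(t)$-valued tensor $\Delta$ it becomes, using symmetry $\Delta([v_0,v_1],v_2)=\Delta(v_2,[v_0,v_1])$, literally $\mathrm{d}\Delta=0$ for the operator~\eqref{newd} of Definition~\ref{defd}. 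Since $\circ_\sigma$ satisfies~(c) by Lemma~\ref{lemma_sigma} and a genuine $\circ$ satisfies it by hypothesis, linearity forces $\Delta=\circ-\circ_\sigma$ to satisfy the polarized~(c), i.e.\ $\mathrm{d}\Delta=0$; conversely $\mathrm{d}\Delta=0$ together with~(c) for $\circ_\sigma$ gives~(c) for $\circ_\sigma+\Delta$. Thus, for fixed $(\V\supset{\mathfrak i},t,\sigma)$, the symmetric enhanced-Leibniz products on $\W$ are in bijection with $\mathrm{d}$-cocycles $\Delta\in S^2\V^*\otimes\ker(t)$ vanishing on $S^2{\mathfrak i}$.

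Next I would remove the remaining choices. Two splittings $\sigma,\sigma'$ of \eqref{seq2} differ by a map ${\mathfrak i}\to\ker(t)$; extending it arbitrarily to $\delta\in\V^*\otimes\ker(t)$ and using $[u,v]_+\in\Sq(\V)\subseteq{\mathfrak i}$ one gets $\circ_{\sigma'}-\circ_\sigma=\mathrm{d}\delta$ in the sense of~\eqref{ddelta}, so the cocycle attached to a fixed $\circ$ shifts by the coboundary $\mathrm{d}\delta$; by Remark~\ref{remd}(2) this is consistent, and $[\Delta]\in H^2_{\mathrm{d}}(\V)\otimes\ker(t)$ is a genuine invariant of the sELA for the canonical identifications ${\mathfrak i}=\im(t)$, $\U=\ker(t)$. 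Since every coboundary $\mathrm{d}\delta$ vanishes on $S^2{\mathfrak i}$ (as ${\mathfrak i}$ is abelian), the condition $\Delta\vert_{S^2{\mathfrak i}}=0$ is a property of the class, so the forward map is well defined. For the converse, from data $(\V\supset{\mathfrak i},\U,[\Delta])$ I would set $\W:=\U\oplus{\mathfrak i}$, $t:=\mathrm{pr}$, $\sigma$ the second inclusion, pick a representative cocycle $\Delta$, and define $u\circ v:=\tfrac12\sigma([u,v]+[v,u])+\Delta(u,v)$; by the bijection above this is an sELA, the forward map applied to it returns $(\V\supset{\mathfrak i},\U,[\Delta])$ (the cocycle produced for the splitting $\sigma$ is the chosen representative), and two representatives differing by $\mathrm{d}\delta$ yield isomorphic sELAs via the plain morphism that is $\mathrm{id}_\V$ on $\V$ and $(u,m)\mapsto(u+\delta(m),m)$ on $\W$. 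Finally, to see that applying the reconstruction to the data extracted from a given sELA $(t_0\colon\W_0\to\V,[\cdot,\cdot],\circ_0)$ reproduces it up to isomorphism, I would exhibit the plain morphism $(\varphi,\mathrm{id}_\V)$ with $\varphi(w):=\bigl(w-\sigma_0(t_0(w)),\,t_0(w)\bigr)\in\ker(t_0)\oplus\im(t_0)$ and check it intertwines $t_0$ with $\mathrm{pr}$ and $\circ_0$ with the reconstructed product, using $\circ_0=\circ_{\sigma_0}+\Delta_0$.

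The non-symmetric case then follows at once from Proposition~\ref{proposition_symmetrization} (equivalently Remark~\ref{remB} together with the last clause of Proposition~\ref{proposition_injective}): any ELA has $\stackrel{s}{\circ}$ as its associated sELA, handled above, and $\beta:=\circ-\stackrel{s}{\circ}$ is the antisymmetric part of $\circ$, the unique element of $\Lambda^2\V^*\otimes\W$ whose $\beta$-transform of the symmetrized sELA returns $\circ$; uniqueness holds because $\beta$ is forced to be that antisymmetric part. I expect the only genuinely delicate point to be the identification of polarized Axiom~(c) with the cocycle condition $\mathrm{d}\Delta=0$ for the somewhat ad hoc complex~\eqref{newcomplex}, i.e.\ verifying that the extra symmetrizing term in~\eqref{newd} is exactly what the polarization of~(c) produces and keeping the signs and the $v_1\leftrightarrow v_2$ symmetry straight when matching $\mathrm{d}\Delta$ against the expression from Lemma~\ref{polarization_lemma}(b); everything else is routine linear algebra over the exact sequences~\eqref{seq2} and~\eqref{seq1}.
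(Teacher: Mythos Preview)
Your proposal is correct and follows essentially the same route as the paper's proof: reduce to $\circ=\circ_\sigma+\Delta$ via Lemma~\ref{lemma_sigma}, identify axioms (a), (b), (d) with the Leibniz-couple condition, $\Delta\vert_{S^2{\mathfrak i}}=0$, and $\Delta\in S^2\V^*\otimes\ker(t)$ respectively, show axiom~(c) is equivalent to $\mathrm{d}\Delta=0$, and track the splitting change as a $\mathrm{d}$-coboundary. The only presentational difference is that you match axiom~(c) against $\mathrm{d}\Delta=0$ via the full polarized identity of Lemma~\ref{polarization_lemma}(b), whereas the paper uses the symmetry of $\mathrm{d}\Delta$ in its last two arguments to reduce to the unpolarized diagonal $\mathrm{d}\Delta(u,v,v)=2\bigl(\Delta(v,[u,v])-\Delta(u,[v,v])\bigr)$; your explicit check that coboundaries vanish on $S^2{\mathfrak i}$ and your construction of the plain isomorphism $(\varphi,\mathrm{id}_\V)$ for the reconstruction are welcome details that the paper leaves implicit.
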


\begin{prf} We already showed above that every ELA defines a unique Leibniz couple $\V \supset \im(t)$, see Lemma \ref{im_t_abelian_ideal} and \eqref{inclusion}. In particular, this also takes care of Property (a) of an ELA completely. 

Now, for a given sELA, choose a splitting \eqref{sigma} of \eqref{seq2}. Then $\W = \ker (t) \oplus \sigma(\im(t)) \cong \ker (t) \oplus \im(t) $ and $u \circ v = u \circ_\sigma v + \Delta(u,v)$ for some $\Delta \in S^2\V^* \otimes \ker(t)$, see Lemma \ref{lemma_sigma} and Equations \eqref{Delta} and \eqref{Deltaspace}. It now remains to check only that $\Delta$ is indeed a cocycle $Z^2_{\mathrm{d}}(\V)\otimes \ker(t)$ and that changing the splitting  \eqref{sigma} changes $\Delta$ by a coboundary $B^2_{\mathrm{d}}(\V)\otimes \ker(t)$. (The algebraic constraint $\Delta_{S^2{\im(t)}}=0$ is directly seen to be equivalent to Property (b) of the ELA, taking into account that $\circ$ and $\circ_\sigma$ satisfy this equation; also Property (d) is equivalent to the requirement that   $\Delta$ must take values in the kernel of $t$, as $\circ_\sigma$ already takes care of the non-trivial left-hand side of that equation).

Considering a second splitting $\sigma'$, we see that the difference of  $u\circ_\sigma v = \sigma([u,v]_+)$  and $u\circ_{\sigma'}v=\sigma'([u,v]_+)$ is the image of a linear map $\delta$ applied to $[u,v]_+ \equiv \tfrac{1}{2}([u,v] + [v,u])$ taking values in $\ker(t)$. So, indeed, $\Delta - \Delta' = \mathrm{d} \delta$ upon inspection of \eqref{ddelta}. 

To verify that $\Delta$ must indeed be a $\mathrm{d}$-cocycle, we first observe that its image with respect to $\mathrm{d}$ is symmetric in the last two entries. Thus, it is sufficient to consider $\mathrm{d}\Delta(u,v,v) = 2\left(\Delta(v,[u,v]) - \Delta(u,[v,v])\right)$, where we used the fact that $\Delta$ is symmetric. Since $\circ_\sigma$ satisfies Property (c) of the ELA by itself, this property is seen to be equivalent to $\mathrm{d}\Delta=0$. 

This concludes the proof for the case of an sELA. The general ELA is obtained by a unique $\beta$-transform from its symmetrization, see Remark \ref{remB}, which is expressed precisely in the last sentence of the theorem. 
\end{prf}

\begin{rem}
We may use also the more Lie algebra adapted data of Proposition \ref{splitLeib} to reformulate the above Theorem,  replacing $\V \supset \im(t)$ by $({\mathfrak g}, {\mathfrak i}, [\alpha])$, with  ${\mathfrak g}:=\V / \im(t)$, ${\mathfrak i}$ any ${\mathfrak g}$-module, and $[\alpha] \in H^2_{\mathrm{d}_L}({\mathfrak g},{\mathfrak i})$. The exact sequence \eqref{seq0} then takes the form
\begin{equation} 0 \to \U \to \U \oplus {\mathfrak i} \to ({\mathfrak i} \oplus {\mathfrak g})_\alpha \to {\mathfrak g} \to 0 \, , \label{seq0'}
\end{equation}
where all the maps in the sequence are the obvious ones. The Leibniz product takes the form \begin{equation} [(m,x),(n,y)]\,:=\,(x\cdot n + \alpha(x,y),[x,y]) \label{sum'}
\end{equation}
for all $m,n \in {\mathfrak i}$ and $x,y \in {\mathfrak g}$. For some applications it is useful to also break up $\Delta \in S^2\V^* \otimes \U$ subject to the constraint $\Delta\vert_{S^2{\mathfrak i}}=0$ into its pieces $\Delta_{\mathfrak g} \in S^2{\mathfrak g}^* \otimes \U$ and  $\Delta_{mix} \in {\mathfrak g}^* \otimes {\mathfrak i}^* \otimes \U$. For an sELA the $\circ$-product is then determined uniquely by  
\begin{equation}  \label{circsplit}
(m,x) \circ (m,x) = (\Delta_{\mathfrak g}(x,x) + 2 \Delta_{mix}(x,m),x\cdot m+ \alpha(x,x)) \, .
\end{equation}
The cocycle condition with respect to the differential $\mathrm{d}$ takes the form 
\begin{equation}  \Delta_{\mathfrak g}(y,[x,y])+\Delta_{mix}([x,y],m) + \Delta_{mix}(y,x\cdot m+\alpha(x,y)) - \Delta_{mix}(x,y\cdot m+\alpha(y,y)) = 0 \label{long}
\end{equation}
for all $x,y\in {\mathfrak g}$ and $m\in {\mathfrak i}$. 
The formulas \eqref{seq0'},  \eqref{sum'}, and \eqref{circsplit} display the general sELA broken up into its smallest pieces. \emph{Every} sELA is isomorphic to the above one for an appropriate  ${\mathfrak g}$, ${\mathfrak i}$, $[\alpha]$, and $[\Delta]$, where different choices of representatives $\alpha$ and $\Delta\sim (\Delta_{\mathfrak g}, \Delta_{mix})$  only change the isomorphism used for the identification. 
\end{rem}

In Appendix A we illustrate the usefulness of these findings at Example \ref{exaGru} and possible generalisations of it.

\vspace{5mm}
We now turn to quadratic ELAs, cf.\ Definition \eqref{defquadratic}. 
\begin{lem}
Let ${\mathfrak h}$ be a quadratic Leibniz algebra and ${\mathfrak i}\subset{\mathfrak h}$ be a left ideal. Then ${\mathfrak i}^{\perp}$ is a left ideal of ${\mathfrak h}$ as well. 
\end{lem}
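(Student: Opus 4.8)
The plan is to reduce everything to the polarized form of the ad-invariance of $\kappa := \kappa_{\mathfrak h}$. First I would recall that, by Definition \ref{defquadratic}, ``quadratic'' means that ${\mathfrak h}$ is equipped with a non-degenerate symmetric bilinear form $\kappa$ satisfying $\kappa([u,v],v)=0$ for all $u,v\in{\mathfrak h}$. Polarizing this identity in its second slot (replace $v$ by $v+w$ and expand by bilinearity, cancelling the two ``diagonal'' terms) yields
$$\kappa([u,v],w) = -\,\kappa([u,w],v) \qquad \text{for all } u,v,w\in{\mathfrak h}\,,$$
i.e.\ every left multiplication operator $\rho_l(u) = [u,\cdot]$ is skew-symmetric with respect to $\kappa$. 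It is worth noting that only left multiplications appear here; right multiplications need not be skew for a non-Lie Leibniz algebra, but they will play no role.

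Next, $\mathfrak{i}^\perp := \{x\in{\mathfrak h} : \kappa(x,m)=0 \text{ for all } m\in\mathfrak{i}\}$ is a linear subspace of ${\mathfrak h}$, being the orthogonal complement of a subspace with respect to a bilinear form. To prove it is a left ideal, I would take arbitrary $u\in{\mathfrak h}$, $x\in\mathfrak{i}^\perp$, and $m\in\mathfrak{i}$, and compute, using the polarized invariance above,
$$\kappa([u,x],m) = -\,\kappa([u,m],x)\,.$$
Since $\mathfrak{i}$ is a left ideal, $[u,m]\in\mathfrak{i}$; since $x\in\mathfrak{i}^\perp$, the right-hand side vanishes. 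Hence $\kappa([u,x],m)=0$ for every $m\in\mathfrak{i}$, which means $[u,x]\in\mathfrak{i}^\perp$. As $u\in{\mathfrak h}$ and $x\in\mathfrak{i}^\perp$ were arbitrary, $[{\mathfrak h},\mathfrak{i}^\perp]\subseteq\mathfrak{i}^\perp$, so $\mathfrak{i}^\perp$ is a left ideal.

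There is essentially no obstacle here; the computation is a two-line polarization argument. The only point deserving (minimal) attention is that the ad-invariance postulated for a quadratic Leibniz algebra is the ``left'' one, $\kappa([u,v],v)=0$, rather than its symmetrized counterpart. But this is exactly what is needed: the statement concerns left ideals, so in the key computation the inner bracket $[u,m]$ is again a left multiplication applied to $m\in\mathfrak{i}$, and only left-invariance enters. (One could remark in passing that the analogue of this lemma for \emph{right} ideals would instead require the additional hypothesis $\kappa([v,u],v)=0$ mentioned in Remark~1) following Definition \ref{defquadratic}.)
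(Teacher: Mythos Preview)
Your proof is correct and follows essentially the same route as the paper's: both use the polarized invariance $\kappa([u,v],w)=-\kappa([u,w],v)$ to reduce $\kappa([u,x],m)$ to $-\kappa([u,m],x)$, and then invoke that $[u,m]\in\mathfrak{i}$. The only difference is cosmetic---you spell out the polarization step and the parenthetical remark on right ideals, while the paper simply writes ``by invariance'' and proceeds directly.
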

 
\begin{proof}
By invariance,
we have for all   $x\in{\mathfrak h}$, $y\in{\mathfrak i}^{\perp}$, and $z\in{\mathfrak i}$
$$\kappa([x,y],z)=-\kappa([x,z],y)=0.$$
Thus $[{\mathfrak h},{\mathfrak i}^{\perp}]\subset {\mathfrak i}^{\perp}$. \end{proof}

\begin{cor}
Let ${\mathfrak h}\supset{\mathfrak i}$ be a Leibniz couple with a positive quadratic Leibniz algebra ${\mathfrak h}$. Then ${\mathfrak h}$ is isomorphic to a hemisemidirect product ${\mathfrak h}\cong{\mathfrak i}\rtimes{\mathfrak i}^{\perp}$. 
\end{cor}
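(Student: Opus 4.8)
The plan is to split ${\mathfrak h}$ orthogonally with respect to the scalar product and to identify the two summands with the data of Example~\ref{example1}. By the preceding lemma, ${\mathfrak i}^{\perp}$ is a left ideal of ${\mathfrak h}$. Since $\kappa:=\kappa_{\mathfrak h}$ is positive definite, its restriction to ${\mathfrak i}$ is non-degenerate, so ${\mathfrak i}\cap{\mathfrak i}^{\perp}=0$; together with the non-degeneracy of $\kappa$ on all of ${\mathfrak h}$ (so that the dimensions add up) this gives a vector-space decomposition ${\mathfrak h}={\mathfrak i}^{\perp}\oplus{\mathfrak i}$. This is the only place where \emph{positivity}, rather than mere non-degeneracy, of $\kappa$ is used: for an indefinite invariant form ${\mathfrak i}$ and ${\mathfrak i}^{\perp}$ can overlap.

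Next I would check that ${\mathfrak i}^{\perp}$ is a Lie subalgebra. Being a left ideal it is in particular closed under the bracket, $[{\mathfrak i}^{\perp},{\mathfrak i}^{\perp}]\subseteq[{\mathfrak h},{\mathfrak i}^{\perp}]\subseteq{\mathfrak i}^{\perp}$. For any $v\in{\mathfrak i}^{\perp}$ one has, on the one hand, $[v,v]\in\Sq({\mathfrak h})\subseteq{\mathfrak i}$ by the definition of a Leibniz couple, and, on the other hand, $[v,v]\in{\mathfrak i}^{\perp}$ by the ideal property; hence $[v,v]\in{\mathfrak i}\cap{\mathfrak i}^{\perp}=0$. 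Thus the bracket restricted to ${\mathfrak i}^{\perp}$ is antisymmetric, i.e.\ $({\mathfrak i}^{\perp},[\cdot,\cdot])$ is a Lie algebra, and $\kappa|_{{\mathfrak i}^{\perp}}$ is an invariant scalar product on it.

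It remains to compute the Leibniz bracket of ${\mathfrak h}$ in the decomposition ${\mathfrak h}={\mathfrak i}^{\perp}\oplus{\mathfrak i}$. Since ${\mathfrak i}\subseteq Z_L({\mathfrak h})$, left multiplication by any $m\in{\mathfrak i}$ vanishes, killing both brackets with ${\mathfrak i}$ on the left. Since ${\mathfrak i}$ is a left ideal, $[x,n]\in{\mathfrak i}$ for $x\in{\mathfrak i}^{\perp}$, $n\in{\mathfrak i}$; setting $x\cdot n:=[x,n]$ turns ${\mathfrak i}$ into a module over the Lie algebra ${\mathfrak i}^{\perp}$, the representation property being the specialization of the Leibniz identity \eqref{Leibniz} already recorded in the discussion following \eqref{sequence}. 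Finally $[x,y]$ for $x,y\in{\mathfrak i}^{\perp}$ is the Lie bracket found above. Writing elements of ${\mathfrak h}$ as pairs $(x,m)\in{\mathfrak i}^{\perp}\oplus{\mathfrak i}$, we therefore get
\[ [(x,m),(y,n)] = [x,y] + x\cdot n \, , \]
which is precisely the hemisemidirect product bracket \eqref{omni} of Example~\ref{example1} for the Lie algebra ${\mathfrak i}^{\perp}$ and its module ${\mathfrak i}$. Hence ${\mathfrak h}\cong{\mathfrak i}\rtimes{\mathfrak i}^{\perp}$; moreover $({\mathfrak i}^{\perp},\kappa|_{{\mathfrak i}^{\perp}})$ is a positive quadratic Lie algebra and $({\mathfrak i},\kappa|_{{\mathfrak i}})$ an orthogonal representation of it, in accordance with the statement in the abstract.

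I do not expect a genuine obstacle here: the argument is essentially bookkeeping with the Leibniz-couple inclusions $\Sq({\mathfrak h})\subseteq{\mathfrak i}\subseteq Z_L({\mathfrak h})$, which get used in exactly two spots (antisymmetry of the bracket on ${\mathfrak i}^{\perp}$, and triviality of left multiplication by ${\mathfrak i}$). The one point that must be handled with care — and the reason the hypothesis ``positive'' cannot be dropped — is the identity ${\mathfrak i}\cap{\mathfrak i}^{\perp}=0$, which relies on definiteness of $\kappa$ and underlies both the vector-space splitting and the Lie-algebra structure on ${\mathfrak i}^{\perp}$.
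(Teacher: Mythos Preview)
Your proof is correct and follows essentially the same route as the paper's: both use positivity of $\kappa$ to split ${\mathfrak h}={\mathfrak i}\oplus{\mathfrak i}^{\perp}$ as vector spaces, invoke the preceding lemma to see that ${\mathfrak i}^{\perp}$ is a (sub)algebra, and then use ${\mathfrak i}\subset Z_L({\mathfrak h})$ to identify the bracket with the hemisemidirect product. Your version is simply more explicit---in particular you spell out why the bracket on ${\mathfrak i}^{\perp}$ is antisymmetric (via $[v,v]\in\Sq({\mathfrak h})\cap{\mathfrak i}^{\perp}\subseteq{\mathfrak i}\cap{\mathfrak i}^{\perp}=0$), a point the paper leaves implicit in the identification ${\mathfrak i}^{\perp}\cong{\mathfrak h}/{\mathfrak i}$.
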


\begin{proof}
By positivity of the quadratic form, we have ${\mathfrak h}={\mathfrak i}\oplus {\mathfrak i}^{\perp}$ as vector spaces. The extension
$$0\to{\mathfrak i}\to{\mathfrak h}\to {\mathfrak h}/{\mathfrak i}\cong{\mathfrak i}^{\perp}\to 0$$
has a section which embeds ${\mathfrak i}^{\perp}$ as a subalgebra of ${\mathfrak h}$. Since ${\mathfrak i}\subset Z_L({\mathfrak h})$, this shows that ${\mathfrak h}$ is isomorphic to a hemisemidirect product ${\mathfrak h}={\mathfrak i}\rtimes{\mathfrak i}^{\perp}$. 
\end{proof} 

In particular, necessarily $H^2_{\mathrm{d}_L}({\mathfrak g},{\mathfrak i}) \ni [\alpha] = 0$, where 
${\mathfrak g}={\mathfrak h}/{\mathfrak i}$. 
Similarly, we have for a left ${\mathfrak h}$-Lie module $M$:

\begin{lem}
Let $M$ be a left ${\mathfrak h}$-Lie module with an ${\mathfrak h}$-invariant positive non-degenerate, symmetric bilinear form and $N\subset M$ be a submodule. Then $M$ decomposes as a direct sum
$$M=N\oplus N^{\perp}.$$
\end{lem}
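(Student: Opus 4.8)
The plan is to show $M = N \oplus N^{\perp}$ as a direct sum of vector spaces that is moreover a direct sum of $\mathfrak h$-submodules. The first step is purely linear algebra: since the bilinear form on $M$ is positive, it is in particular non-degenerate when restricted to the submodule $N$ (a positive form restricts to a positive, hence non-degenerate, form on any subspace), which forces $N \cap N^{\perp} = 0$; combined with $\dim N + \dim N^{\perp} = \dim M$ (again a consequence of non-degeneracy on all of $M$, in finite dimensions), this gives the vector space decomposition $M = N \oplus N^{\perp}$.

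The second step is to check that $N^{\perp}$ is itself an $\mathfrak h$-submodule, which is exactly the Lie-module analogue of the lemma proved just above for ideals: for $x \in \mathfrak h$, $y \in N^{\perp}$ and $z \in N$ one uses $\mathfrak h$-invariance of the form, $\kappa(\rho_l(x) y, z) = -\kappa(y, \rho_l(x) z) = 0$, where the last equality holds because $\rho_l(x) z \in N$ since $N$ is a submodule. Hence $\rho_l(x) N^{\perp} \subseteq N^{\perp}$, so the decomposition $M = N \oplus N^{\perp}$ is a decomposition of $\mathfrak h$-modules. (If $M$ is only a Lie module, so that only $\rho_l$ is relevant, this is all that is needed; if one also carries along $\rho_r$, the same computation with $\rho_r$ in place of $\rho_l$ applies verbatim, using the appropriate invariance condition.)

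I do not anticipate a genuine obstacle here: the statement is the module-theoretic transcription of the ideal lemma, and the two ingredients — non-degeneracy of a positive form on subspaces, and the invariance manipulation — are both elementary. The only point requiring a word of care is the finite-dimensionality assumption implicit in concluding $M = N \oplus N^{\perp}$ from $N \cap N^{\perp} = 0$; in the infinite-dimensional setting one would instead invoke positivity directly to write any $m \in M$ as its orthogonal projection onto $N$ plus the remainder in $N^{\perp}$, which still works provided the relevant projection exists (e.g. $N$ finite-dimensional, or a Hilbert-space completion argument), but in the present algebraic context all spaces in sight are finite-dimensional and the dimension count suffices.
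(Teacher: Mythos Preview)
Your proposal is correct and follows essentially the same two-step approach as the paper: first use positivity of the form to obtain the vector space decomposition $M = N \oplus N^{\perp}$, then use invariance in the form $\kappa(x\cdot m,n)=-\kappa(x\cdot n,m)=0$ for $m\in N^{\perp}$, $n\in N$ to conclude that $N^{\perp}$ is a submodule. The paper's proof is terser (it simply asserts the vector space splitting from positivity without the dimension-count discussion), but the substance is identical.
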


\begin{proof}
By positivity of the quadratic form, we have $M=N\oplus N^{\perp}$ as vector spaces. 
By its invariance, we have for all $x\in{\mathfrak h}$, $m\in N^{\perp}$, and $n\in N$
$$\kappa(x\cdot m,n)=-\kappa(x\cdot n,m)=0,$$
which implies that $N^{\perp}$ is a submodule. 
\end{proof}
Note that in an ELA, $\ker(t)\subset \W$ is a trivial $\V$-module. Taking into account in addition our previous findings for a general ELA, we are led to the following
\begin{prop} \label{character}Every positive quadratic sELA $(t\colon\W\to \V,[\cdot, \cdot],\circ,\kappa_\V,\kappa_\W)$ gives rise to and is, reciprocally, uniquely determined by: 
\begin{enumerate}
\item a positive quadratic Lie algebra $({\mathfrak g},\kappa_{\mathfrak g})$, 
\item a ${\mathfrak g}$-module ${\mathfrak i}$ equipped with positive ${\mathfrak g}$-invariant quadratic forms $\kappa_{\mathfrak i}$ and $\kappa_{\mathfrak i}'$, 
\item a Hilbert space $(\U,\kappa_{\U})$,
\item an adinvariant map $\Delta_{\mathfrak g}\colon S^2{\mathfrak g} \to \U$,
\item and a map $\Delta_{\mathrm{mix}}\colon {\mathfrak g} \otimes {\mathfrak i} \to \U$ subject to the constraint, $\forall$ $x,y \in {\mathfrak g}$, $m \in {\mathfrak i}$,
$$ \Delta_{mix}([x,y]_{\mathfrak g},m)  = \Delta_{mix}(x,y\cdot m) - \Delta_{mix}(y,x\cdot m),$$
\end{enumerate}
such that, up to isomorphism and $\forall$ $ x,y \in {\mathfrak g}$, $m,n \in {\mathfrak i}$,
\begin{eqnarray}
(\V,\kappa_\V)&=& ({\mathfrak g}\oplus {\mathfrak i},\kappa_{\mathfrak g}\oplus \kappa_{\mathfrak i})\, , \nonumber \\
(\W,\kappa_\W)&=&(\U\oplus {\mathfrak i},\kappa_{\U}\oplus \kappa_{\mathfrak i}')\, ,  \nonumber\\ 
t\colon \U\oplus {\mathfrak i} \to {\mathfrak g}\oplus {\mathfrak i}& ,& (u,m) \mapsto (0,m)\, , \nonumber\\
{} [(x,m),(y,n)]&=&([x,y]_{\mathfrak g}, x\cdot n)  \, , \nonumber\\
 (x,m) \circ (x,m) &=& (x\cdot m,\Delta_{\mathfrak g}(x,x) + 2 \Delta_{mix}(x,m)). \nonumber
\end{eqnarray}
Every positive quadratic ELA differs from this one, up to isomorphism, by a unique and otherwise unrestricted $\beta \in \Lambda^2\V^* \otimes \W$, which provides the skew-symmetric part of the $\circ$-product.
\end{prop}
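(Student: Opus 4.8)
The plan is to build on the structure theorem (Theorem~\ref{maintheorem}) and the three decomposition lemmas just established, showing that positivity of $\kappa_\V$ and $\kappa_\W$ forces all the data to split orthogonally in the way claimed. First I would start from a positive quadratic sELA $(t\colon\W\to\V,[\cdot,\cdot],\circ,\kappa_\V,\kappa_\W)$ and apply the Corollary following the left-ideal lemma: since $\V$ is a positive quadratic Leibniz algebra and ${\mathfrak i}:=\im(t)$ is a left ideal (Lemma~\ref{im_t_abelian_ideal}), we get $\V\cong{\mathfrak i}\rtimes{\mathfrak i}^\perp$ as a hemisemidirect product, with ${\mathfrak g}:={\mathfrak i}^\perp\cong\V/\im(t)$ a Lie algebra, $\kappa_{\mathfrak g}:=\kappa_\V|_{{\mathfrak g}}$ a positive invariant form on it, and $\kappa_{\mathfrak i}:=\kappa_\V|_{\mathfrak i}$ a positive ${\mathfrak g}$-invariant form on the module ${\mathfrak i}$; in particular $[\alpha]=0$ and the bracket reduces to $[(x,m),(y,n)]=([x,y]_{\mathfrak g},x\cdot n)$. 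This already handles items 1 and the Leibniz bracket, and it shows $(\V,\kappa_\V)=({\mathfrak g}\oplus{\mathfrak i},\kappa_{\mathfrak g}\oplus\kappa_{\mathfrak i})$.

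Next I would analyze $\W$. By the submodule lemma applied to the trivial $\V$-module $\ker(t)\subset\W$ (recall $\ker(t)$ is a trivial module, as noted before the Proposition), positivity of $\kappa_\W$ gives $\W=\ker(t)\oplus\ker(t)^\perp$ with $\ker(t)^\perp\cong\im(t)={\mathfrak i}$ as $\V$-modules; set $\U:=\ker(t)$, $\kappa_\U:=\kappa_\W|_\U$ (a Hilbert space structure, item 3), and $\kappa_{\mathfrak i}':=\kappa_\W|_{\ker(t)^\perp}$, a positive ${\mathfrak g}$-invariant form on ${\mathfrak i}$, finishing item 2 and the description $(\W,\kappa_\W)=(\U\oplus{\mathfrak i},\kappa_\U\oplus\kappa_{\mathfrak i}')$ and the formula for $t$. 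The orthogonal splitting $\W=\U\oplus{\mathfrak i}$ is precisely a splitting $\sigma\colon\im(t)\to\W$ of~\eqref{seq2}, so Theorem~\ref{maintheorem} applies with this canonical $\sigma$: the $\circ$-product is $u\circ v=\tfrac12\sigma([u,v]+[v,u])+\Delta(u,v)$ with $\Delta\in S^2\V^*\otimes\U$, $\Delta|_{S^2{\mathfrak i}}=0$, a $\mathrm{d}$-cocycle. Using the Leibniz bracket above, $\tfrac12\sigma([u,v]+[v,u])$ evaluated on $(x,m)$ gives $x\cdot m\in{\mathfrak i}\subset\W$, matching the ${\mathfrak i}$-component of the claimed $\circ$-formula. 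Decomposing $\Delta=\Delta_{\mathfrak g}+\Delta_{\mathrm{mix}}$ as in Remark after Theorem~\ref{maintheorem} (the $S^2{\mathfrak i}$-part vanishes by the constraint), the $\circ$-formula becomes $(x,m)\circ(x,m)=(x\cdot m,\Delta_{\mathfrak g}(x,x)+2\Delta_{\mathrm{mix}}(x,m))$ in the $({\mathfrak i},\U)$-ordering; the cocycle condition~\eqref{long}, now with $\alpha=0$, reduces to the ad-invariance of $\Delta_{\mathfrak g}$ (take $m=0$: $\Delta_{\mathfrak g}(y,[x,y])=0$, polarize) together with the displayed constraint on $\Delta_{\mathrm{mix}}$, yielding items 4 and 5.

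For the converse direction, I would run the construction backwards: given data 1--5, define $\V$, $\W$, $t$, the bracket and $\circ$ by the displayed formulas; one checks the bracket is Leibniz (hemisemidirect product, standard), the axioms (a)--(d) hold by the same computations as in Lemma~\ref{lemma_sigma} plus $\Delta|_{S^2{\mathfrak i}}=0$ for (b), and that $\kappa_\V$, $\kappa_\W$ satisfy~\eqref{kappaVW}: $\kappa_\V([u,v],v)=0$ is invariance of $\kappa_{\mathfrak g}$ combined with ${\mathfrak g}$-invariance of $\kappa_{\mathfrak i}$ (using that the bracket's ${\mathfrak i}$-component is $x\cdot n$), and $\kappa_\W(u\stackrel{s}{\circ}t(w),w)=0$ reduces, since $t(w)\in{\mathfrak i}$ and $u\stackrel{s}{\circ}t(w)=\tfrac12[u,t(w)]+\ldots\in{\mathfrak i}$ lands in the ${\mathfrak i}$-summand while also $\ker(t)$ is $\kappa_\W$-orthogonal to ${\mathfrak i}$, to ${\mathfrak g}$-invariance of $\kappa_{\mathfrak i}'$. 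Finally, the non-symmetric case: by Theorem~\ref{maintheorem} (last sentence) and Definition~\ref{morphismsELA}, every ELA differs from its symmetrization by a unique $\beta$-transform with $\beta\in\Lambda^2\V^*\otimes\W$, and since the quadratic forms live on $\V$ and $\W$ alone and~\eqref{kappaVW} involves only $\stackrel{s}{\circ}$, adding $\beta$ does not affect the quadratic structure — so $\beta$ is unrestricted. The main obstacle I anticipate is bookkeeping the two invariant forms $\kappa_{\mathfrak i},\kappa_{\mathfrak i}'$ on the \emph{same} module ${\mathfrak i}$ coming from $\kappa_\V$ and $\kappa_\W$ respectively, and making sure the isomorphism identifying the abstract data with the given sELA is genuinely well-defined independent of the chosen representatives $\Delta_{\mathfrak g},\Delta_{\mathrm{mix}}$ (changing them by a $\mathrm{d}$-coboundary $\mathrm{d}\delta$ with $\delta\in\V^*\otimes\U$ must be absorbed into a different orthogonal splitting $\sigma'$, hence into an ELA isomorphism) — that is, checking that the correspondence is well-defined on cohomology classes exactly as in the non-quadratic Theorem~\ref{maintheorem}, which is why the statement is phrased "up to isomorphism."
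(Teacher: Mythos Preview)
Your proposal is correct and follows precisely the route the paper sets up: the paper does not spell out a proof of Proposition~\ref{character} but presents it as the evident consequence of the Corollary (hemisemidirect splitting of $\V$ via positivity, hence $[\alpha]=0$), the submodule lemma (orthogonal splitting of $\W$ along $\ker(t)$, noted to be a trivial module), and Theorem~\ref{maintheorem} with the cocycle condition~\eqref{long} specialized to $\alpha=0$. Your derivation of items 4 and 5 from~\eqref{long} (setting $m=0$ and polarizing for ad-invariance of $\Delta_{\mathfrak g}$, then subtracting to get the $\Delta_{\mathrm{mix}}$ constraint), the converse verification, and the $\beta$-transform remark all match the intended argument.
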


\section{The Lie-2-algebra of an ELA}
\label{sec:Lie2}

The goal of this section is to associate to each enhanced Leibniz
algebra a two term $L_{\infty}$-algebra or, equivalently, a semi-strict Lie
2-algebra in the sense of Baez-Crans \cite{BaeCra} and to show that this map is functorial. 


Recall the definition of
a Lie 2-algebra or two term $L_{\infty}$-algebra, see \cite{BaeCra}:

\begin{defi}\label{Lie2}
A 2-term $L_{\infty}$-algebra is a linear map $t\colon V_1\to V_0$ together with 
a bilinear map $l_2\colon V_i\times V_j\to V_{i+j}$ for $0\leq i+j\leq 1$, which is 
denoted as a bracket $\{\cdot,\cdot\}$, and a trilinear map 
$l_3\colon V_0\times V_0\times V_0\to V_1$ which altogether satisfy the following axioms
for every $w,x,y,z\in V_0$ and $h,k\in V_1$:
\begin{enumerate}
\item[(a)] $\{x,y\}\,=\,-\{y,x\}$,
\item[(b)] $\{x,h\}\,=\,-\{h,x\}$,
\item[(c)] $\{h,k\}\,=\,0$,
\item[(d)] $l_3(x,y,z)$ is totally antisymmetric in the arguments $x,y,z$,
\item[(e)] $t(\{x,h\})\,=\,\{x,t(h)\}$,
\item[(f)] $\{t(h),k\}\,=\,\{h,t(k)\}$,
\item[(g)] $t(l_3(x,y,z))\,=\,-\{\{x,y\},z\}+\{\{x,z\},y\}+\{x,\{y,z\}\}$,
\item[(h)] $l_3(t(h),x,y)\,=\,-\{\{x,y\},h\}+\{\{x,h\},y\}+\{x,\{y,h\}\}$,
\item[(i)] 
$\{l_3(w,x,y),z\}+\{l_3(w,y,z),x\}-\{l_3(w,x,z),y\}-\{l_3(x,y,z),w\}+$ \\
$+l_3(\{x,z\},w,y)+l_3(\{w,y\},x,z)-l_3(\{w,x\},y,z)-l_3(\{w,z\},x,y)+$  \\
$-l_3(\{x,y\},w,z)-l_3(\{y,z\},w,x)\,=\,0.$
\end{enumerate}
\end{defi}

Observe that the bracket $l_2$ splits into two components. Denoting $V_1=:\W$
and $V_0=:\V$, there is an internal operation to $\V$, $\{,\}\colon \V\times\V\to\V$, as well as a component of $l_2$ corresponding to some kind of {\it operation} of $\V$ on $\W$. Put in one sentence, a
two term $L_{\infty}$-algebra is thus a complex $t\colon \W\to\V$ with an
antisymmetric bracket $\{,\}$ on $\V$ and an operation of $\V$ on $\W$
such that $t$ is equivariant with respect to the operation (Condition (e)) and satisfies
Peiffer's second relation (Condition (f)) plus
an ``anomaly'' $\gamma:= l_3$, which is a $3$-cocycle (Condition (i)) and governs, according to (g) and (h), the default of $\{,\}$ to be a Lie bracket and the $\ast$-commutator of the 
operation $\ast$ to define a morphism for this bracket. (We have only defined a left operation of ${\mathbb V}$ on 
${\mathbb W}$, while the right operation can be taken to be the negative of 
this left operation, in accordance with the Condition (b) of Definition \ref{Lie2}). 
We use this description of the Lie 2-algebraic data in what follows. 


We now provide the definition of the functor ${\cal F}$ from {\bf ELeib} to {\bf Lie-2-alg}, starting with the relation of the objects:
\begin{defi} 
\label{functorobjects}For every ELA $(t\colon\W\to\V,[\cdot,\cdot],\circ)$, the structural data of the  Lie 2-algebra associated by means of ${\cal F}$ are defined over the same complex $t\colon\W\to\V$, thus its 1-bracket  is  $l_1 \equiv t$. In addition, for all $v,v_1,v_2,v_3\in {\mathbb V}$ and all $w\in {\mathbb W}$, we put 
\begin{enumerate}
\item[($\alpha$)] $l_2(v_1,v_2) \equiv \{v_1,v_2\}\,:=\,[v_1,v_2]-t(v_1\circ v_2)$,
\item[($\beta$)] $l_2(v,w) \equiv v\ast w\,:=\, t(w) \circ v$,
\item[($\gamma$)] 
{${}$}
\vspace{-8mm}
\begin{eqnarray*}\hspace{-5mm}
l_3(v_1,v_2,v_3) &\equiv& \gamma(v_1,v_2,v_3):=\\ 
&&-v_1\circ[v_2,v_3]+[v_1,v_2]\circ v_3+v_2\circ[v_1,v_3]+\\
&&-t(v_2\circ v_3)\circ v_1-t(v_1\circ v_2)\circ v_3+t(v_1\circ v_3)\circ v_2.
\end{eqnarray*}
\end{enumerate}
\end{defi}
\begin{rem}\label{remfunctor}For this to really define a true functor, we need to check already that the image of an ELA as defined above really satisfies all structural identities of a Lie 2-algebra. This is the main task of the present section. But before doing so, we want to define the map ${\cal F}$ also on the morphisms of the two categories, since this will be useful in the proof of the first fact.
\end{rem}

 For this purpose we now recall the notion of morphisms for 2-term
$L_{\infty}$-algebras $(t\colon\W\to\V,l_2,l_3)$. There first are the obvious ones based on a commutative diagram  \ref{diagram}, which we do not want to spell out in detail and which we also call the \emph{plain morphisms} of the Lie 2-algebra. In addition, there is also the analogue of the $\beta$-transformations of ELAs, cf.\ Definition \ref{betatrafo}. Every $\beta \in \Lambda^2 \V^* \otimes \W$ gives rise to a Lie 2-algebra morphism as follows \cite{GruStr}:
\begin{eqnarray} l_2(v_1,v_2) &\mapsto&l_2(v_1,v_2)-t(\beta(v_1,v_2)) \, , \label{l2trafo}\\
 l_2(v,w) &\mapsto&l_2(v,w)+\beta(t(w),v) \, ,\label{l2trafo'} \\
  l_3(v_1,v_2,v_3) &\mapsto&l_3(v_1,v_2,v_3)-\bar{D}\beta(v_1,v_2,v_3) \nonumber \\ &&-\beta(t(\beta(v_1,v_2)),v_3) + \mathrm{cycl.}(1,2,3) \, . \label{gammatrafo}
\end{eqnarray}
Here we used the fake coboundary operator $\bar{D} \colon \Lambda^p \V^* \otimes \W \to \Lambda^{p+1} \V^* \otimes \W$ defined by means of the generalized Cartan formula:
\begin{align} \label{genCar} \bar{D}  \omega (v_1, \ldots , v_{p+1}) = 
  &  \sum_i (-1)^{i+1} v_i \ast 
  \omega(v_1,\ldots,\widehat{v_i},\ldots,v_{p+1})  \nonumber \\
  & +\sum_{i<j} (-1)^{i+j} \omega\big( \{v_i,v_j\}, \psi_0, \ldots,\widehat{v_i},\ldots\widehat{v_j},\ldots,v_{p+1} \big)
  \, .  
\end{align}
If the bracket  $\{\cdot ,\cdot \}$ were a true Lie bracket and $\ast$ a true representation, $\bar{D}$ would be nothing but the Chevalley-Eilenberg differential of the representation. This happens here only if the anomaly $\gamma$ vanishes, however, in which case $\bar{D}$ squares to zero and a (semi-strict) Lie 2-algebra reduces to a crossed module of Lie algebras (by equipping $\W$ with the bracket $\{w_1,w_2\}_\W := t(w_1) \ast w_2$), sometimes also called a strict Lie 2-algebra \cite{Baez02}. The operator $\bar{D}$ receives the interpretation of an almost Lie algebroid exterior covariant derivative in the context of Lie 2-algebroids, where its square then gives the almost Lie-algebroid curvature, which is proportional to $\gamma$ composed with $t$ in the last argument  \cite{GruStr}. That $ \bar{D}^2 \neq 0 $ 
reflects itself in the necessity of the addition of the terms in the second line in the transformation \eqref{gammatrafo}, since the consistency condition (i) in the Definition \ref{Lie2} becomes simply
\begin{equation} \bar{D}l_3 \equiv \bar{D}\gamma = 0 \label{barDgamma}
\end{equation}
in this notation. In addition, $\bar{D}$ changes under the $\beta$-transformations \eqref{l2trafo}, \eqref{l2trafo'}. 

There now is a simple consistency check to perform:
\begin{lem} The $\beta$-transformation of an ELA, Definition  \ref{betatrafo}, induces the transformations \eqref{l2trafo}, \eqref{l2trafo'}, and \eqref{gammatrafo} upon the map given in Definition \ref{functorobjects}. \label{useful}
\end{lem}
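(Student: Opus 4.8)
The plan is a purely formal comparison of defining expressions: no structural property of the target is needed, since the claim is an identity between the formulas of Definition~\ref{functorobjects} applied to a $\beta$-transformed ELA and the transformation rules \eqref{l2trafo}--\eqref{gammatrafo}. Concretely, I would fix an ELA $A=(t\colon\W\to\V,[\cdot,\cdot],\circ)$, put $\circ'=\circ+\beta$ with $\beta\in\Lambda^2\V^*\otimes\W$, and write $l_1,l_2,l_3$ resp.\ $l_1',l_2',l_3'$ for the data that Definition~\ref{functorobjects} assigns to $A$ resp.\ to the $\beta$-transform $A'$, with $\ast$ and $\bar{D}$ the operations built from $l_2$ and $\ast$ of ${\cal F}(A)$ --- these being exactly what appears on the right of \eqref{l2trafo'}--\eqref{gammatrafo}.

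First, a $\beta$-transform leaves $t$ unchanged, so $l_1'=t=l_1$ with nothing to check. For the $2$-bracket I would just insert $\circ'=\circ+\beta$ into items~$(\alpha)$ and~$(\beta)$ of Definition~\ref{functorobjects}: this yields immediately $l_2'(v_1,v_2)=[v_1,v_2]-t(v_1\circ v_2)-t(\beta(v_1,v_2))=l_2(v_1,v_2)-t(\beta(v_1,v_2))$ and $l_2'(v,w)=t(w)\circ'v=t(w)\circ v+\beta(t(w),v)=l_2(v,w)+\beta(t(w),v)$, i.e.\ precisely \eqref{l2trafo} and \eqref{l2trafo'}.

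The substance is the $3$-bracket. I would substitute $\circ'=\circ+\beta$ into item~$(\gamma)$ and sort the result by degree in $\beta$. The degree-$0$ part is $l_3(v_1,v_2,v_3)$. The degree-$2$ part comes only from the three terms of the shape $t((\cdot)\circ'(\cdot))\circ'(\cdot)$ and equals $-\beta(t(\beta(v_2,v_3)),v_1)-\beta(t(\beta(v_1,v_2)),v_3)+\beta(t(\beta(v_1,v_3)),v_2)$; rewriting the last summand by the antisymmetry of $\beta$ (so that $t(\beta(v_1,v_3))=-t(\beta(v_3,v_1))$) turns this into $-\sum_{\mathrm{cycl}}\beta(t(\beta(v_1,v_2)),v_3)$, which is the last line of \eqref{gammatrafo}. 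For the degree-$1$ part I would collect its nine terms --- three of type $\beta([\cdot,\cdot],\cdot)$ from the first line of item~$(\gamma)$, and three of type $\beta(t((\cdot)\circ(\cdot)),\cdot)$ together with three of type $t(\beta(\cdot,\cdot))\circ(\cdot)$ from its second line --- use antisymmetry of $\beta$ to bring both $\V$-arguments of the first three into bracketed position, and then compare with $\bar{D}\beta(v_1,v_2,v_3)$: expanding the generalized Cartan formula \eqref{genCar} and substituting $\{v_i,v_j\}=[v_i,v_j]-t(v_i\circ v_j)$ and $v_i\ast w=t(w)\circ v_i$ shows that the $\beta(\{v_i,v_j\},v_k)$-terms split into exactly the $\beta([\cdot,\cdot],\cdot)$- and $\beta(t((\cdot)\circ(\cdot)),\cdot)$-pieces, while the $v_i\ast\beta(\cdot,\cdot)$-terms give the $t(\beta(\cdot,\cdot))\circ(\cdot)$-pieces. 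A sign-by-sign matching of the nine terms gives degree-$1$ part $=-\bar{D}\beta(v_1,v_2,v_3)$, and adding the three contributions reproduces \eqref{gammatrafo}.

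The only real risk is sign bookkeeping in that last step, so I would fix two conventions up front: that $\bar{D}$ in \eqref{gammatrafo} is the operator of the \emph{untransformed} Lie $2$-algebra ${\cal F}(A)$, built from $l_2$ and $\ast$ and not from $[\cdot,\cdot]$ directly; and that the $\mathrm{cycl.}(1,2,3)$ in \eqref{gammatrafo} is attached only to the term $-\beta(t(\beta(v_1,v_2)),v_3)$, since $\bar{D}\beta\in\Lambda^3\V^*\otimes\W$ is already totally antisymmetric and must not be cycled. With these in place the computation is routine --- which is also why this consistency check can be recorded here, before ${\cal F}(A)$ has been shown to be an actual Lie $2$-algebra.
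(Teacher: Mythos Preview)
Your proposal is correct and follows exactly the approach the paper takes: substitute $\circ'=\circ+\beta$ into the formulas $(\alpha)$, $(\beta)$, $(\gamma)$ of Definition~\ref{functorobjects} and compare. The paper's own proof is a two-sentence sketch (``follow immediately'' for $l_2$, ``straightforward calculation'' for $l_3$), so your degree-sorting in $\beta$ and the explicit matching against $\bar{D}\beta$ simply flesh out what the paper leaves to the reader; your two closing clarifications on conventions are apt and consistent with how the paper uses \eqref{gammatrafo}.
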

\begin{prf} By the replacement of the $\circ$-product in Definition  \ref{betatrafo}, the formulas \eqref{l2trafo} and \eqref{l2trafo'} follow immediately from the relations ($\alpha$) and ($\beta$), respectively. The formula for the change of $l_3\equiv \gamma$ is a straightforward calculation upon using the relation ($\gamma$) in Definition \ref{functorobjects}. 
\end{prf}
This permits us to define ${\cal F}$ on the morphisms (but cf.\ also Remark \ref{remfunctor}):
\begin{defi} \label{functormorphism} The functor  ${\cal F}\colon {\bf ELeib} \to$ {\bf Lie-2-alg} maps the plain morphisms and the $\beta$-transformations of an ELA as given by 
 Definition \ref{morphismsELA} to the plain morphisms and $\beta$-transformations of the corresponding Lie 2-algebras in the image of ${\cal F}$ as specified above.
\end{defi} 
 


Now we are ready to formulate the second main result of this paper:

\begin{theo}   \label{theorem_associated_L_infty_algebra}
 Definitions \ref{functorobjects} and \ref{functormorphism} define a functor from the category of enhanced Leibniz algebras to the category of (semi-strict) Lie 2-algebras. 
\end{theo}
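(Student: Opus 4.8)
The plan is to reduce the verification of the nine axioms (a)--(i) of Definition \ref{Lie2} for the image $\mathcal{F}(t\colon\W\to\V,[\cdot,\cdot],\circ)$ to the much simpler case of a symmetric ELA, exploiting the equivalence of categories from Proposition \ref{proposition_symmetrization} together with the consistency check of Lemma \ref{useful}. Concretely: given an arbitrary ELA, write it as a $\beta$-transform of its symmetrization $S(A)\in\mathbf{sELeib}$. By Lemma \ref{useful}, the Lie 2-algebra data assigned by $\mathcal{F}$ to $A$ are obtained from those assigned to $S(A)$ by the $\beta$-transformations \eqref{l2trafo}, \eqref{l2trafo'}, \eqref{gammatrafo}; and since these transformations are known to be isomorphisms of Lie 2-algebras (they preserve the axioms by \cite{GruStr}), it suffices to prove that $\mathcal{F}(S(A))$ is a genuine Lie 2-algebra. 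Thus from now on I assume $\circ$ is symmetric, so $u\circ v = u\stackrel{s}{\circ}v$ throughout, and the formulas ($\alpha$)--($\gamma$) simplify accordingly; in particular $v\ast w = t(w)\circ v = 2\, v\stackrel{s}{\circ}t(w) = \rho_l(v)\cdot w$ up to the factor already discussed, so the $\ast$-action is the adjoint representation of Definition \ref{defVWirkung}.

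Next I would dispatch the cheap axioms. Axiom (a): $\{v_1,v_2\}=[v_1,v_2]-t(v_1\circ v_2)$ is antisymmetric because by Lemma \ref{polarization_lemma}(c) the symmetric part of $[\cdot,\cdot]$ equals $t$ applied to the symmetric part of $\circ$, which is all of $\circ$ here. Axiom (c) is vacuous since $\W$ sits in degree $1$ and $l_2(\W,\W)$ is not defined. Axioms (b) and (d) are conventions (the right action is minus the left one; $\gamma$ is totally antisymmetric by construction from the explicit formula ($\gamma$), using symmetry of $\circ$). Axiom (e), $t(\{v,w\})=\{v,t(w)\}$, i.e. $t(v\ast w)=[v,t(w)]-t(v\circ t(w))$: expand using $v\ast w=2v\stackrel{s}{\circ}t(w)$ and Corollary \ref{corlemma}(3), which gives $[v,t(w)]=2t(v\stackrel{s}{\circ}t(w))$, together with Corollary \ref{corlemma}(1) that kills the $t(v\circ t(w))$ term; so (e) is immediate. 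Axiom (f), $\{t(h),k\}=\{h,t(k)\}$, i.e. $t(k)\ast t(h) \overset{?}{=} -\,(\text{same with }h,k\text{ swapped})$ after accounting for signs — actually $\{t(h),k\}=t(h)\ast k$ reading indices the other way; unwinding, (f) reduces to $t(k)\circ t(h)=-t(h)\circ t(k)$, which is Lemma \ref{polarization_lemma}(a) (and the symmetric part vanishes by Corollary \ref{corlemma}(1), consistently).

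The substantive work is axioms (g), (h), (i). For (g), $t(\gamma(v_1,v_2,v_3)) = -\{\{v_1,v_2\},v_3\}+\{\{v_1,v_3\},v_2\}+\{v_1,\{v_2,v_3\}\}$: I would substitute ($\alpha$) for the inner and outer brackets on the right, expand everything into Leibniz brackets and $t$-of-$\circ$ terms, then apply $t$ to the explicit formula ($\gamma$); the Leibniz identity \eqref{Leibniz} matches the degree-$\V$ part, while the $t(\circ)$-cross-terms are reconciled using Corollary \ref{corlemma}(2),(3) and Property (d). For (h), $\gamma(t(h),v_1,v_2) = -\{\{v_1,v_2\},h\}+\{\{v_1,h\},v_2\}+\{v_1,\{v_2,h\}\}$, I would plug $t(h)$ into ($\gamma$), using $[t(h),v]=0$ (Property (a)) to kill several terms, and on the right-hand side expand the $\ast$-actions via the adjoint representation, invoking the representation property \eqref{Darstellung} and its consequences (Corollary \ref{corlemma}). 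Axiom (i) is the hardest: it is the higher Jacobi / $\bar D\gamma=0$ identity, a ten-term relation in four $\V$-arguments valued in $\W$. My plan is to recognize the left-hand side as $\bar D\gamma(w,x,y,z)$ in the notation of \eqref{genCar}--\eqref{barDgamma}, and then to prove $\bar D\gamma=0$ by a careful but systematic expansion: substitute the explicit ($\gamma$), the bracket ($\alpha$), and the action ($\beta$); reduce all $t(\cdot)\circ(\cdot)$ composites using Corollary \ref{corlemma}; collect terms by their "shape'' (pure $\circ$-of-double-bracket, $\circ$-of-$t\circ$, double $t\circ$); and show each group cancels using the polarized Leibniz identity Lemma \ref{polarization_lemma}(b),(c), the Leibniz identity \eqref{Leibniz} itself, and Property (d). The main obstacle is precisely this axiom (i): it is a genuinely long bilinear-in-$\circ$ computation where sign bookkeeping and the left/right order of arguments around $\circ$ are essential (as the authors themselves flag), so the real content of the proof is organizing that expansion so that it visibly closes — symmetry of $\circ$ in the sELA reduction is what makes it tractable at all. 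Having verified (a)--(i) for sELAs, functoriality on morphisms follows since plain morphisms are handled by naturality of all the defining formulas and $\beta$-transforms by Lemma \ref{useful}; composition is respected because $\beta$-transforms compose additively (as noted in the proof of Proposition \ref{proposition_symmetrization}) and $\mathcal{F}$ respects this, completing the theorem.
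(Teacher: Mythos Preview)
Your overall architecture matches the paper exactly: reduce to the symmetric case via Lemma \ref{useful} and Proposition \ref{proposition_symmetrization}, then verify axioms (a)--(i) for sELAs. The treatment of (a), (b), (c), (e), (f), (g) is essentially what the paper does in Lemmas \ref{antisy}--\ref{definition_anomaly}.

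There is one genuine slip. You call axiom (d) a ``convention'' and say $\gamma$ is ``totally antisymmetric by construction from the explicit formula ($\gamma$), using symmetry of $\circ$.'' This is not true: the expression ($\gamma$) is \emph{not} manifestly antisymmetric even when $\circ$ is symmetric. The paper isolates this as Lemma \ref{antis} and checks $\gamma(v,v,v_3)=0$, $\gamma(v,v_2,v)=0$, $\gamma(v_1,v,v)=0$ separately; each vanishing uses ELA Properties (c) and/or (d) (via Lemma \ref{polarization_lemma}(b),(c)), not merely $\circ=\stackrel{s}{\circ}$. For instance, $\gamma(v,v,v_3)=([v,v]-t(v\circ v))\circ v_3$, which is zero only by Property (d). So axiom (d) of the Lie 2-algebra is content, not convention. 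Relatedly, be careful with your identification ``$v\ast w=\rho_l(v)\cdot w$'': there is a factor of $2$ (you note this), and precisely because of it $\ast$ is \emph{not} a representation---it is anomalous, with anomaly $\gamma$. Invoking \eqref{Darstellung} for axiom (h) will not close; the paper instead proves Lemma \ref{action_lemma} directly from Corollary \ref{corlemma}(2).

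For axiom (i) your plan is a brute-force expansion of all ten terms. That is doable in principle, but the paper takes a structurally cleaner route (Lemma \ref{anomaly_lemma}): observe that $\gamma=\mathrm{D}f$ for $f(v_1,v_2)=-v_1\circ v_2$, where $\mathrm{D}$ is the Loday pseudo-coboundary \eqref{Loday_coboundary} with respect to the genuine Leibniz bracket $[\cdot,\cdot]$ and the (pseudo-)module $\W$. Then $\bar D\gamma$ differs from $\mathrm{D}^2 f$ by exactly the terms picked up when replacing $\{\cdot,\cdot\}$ by $[\cdot,\cdot]$ in the six ``bracket slots'' of \eqref{coboundary_expression}; each such discrepancy is a term of type $\gamma(\cdot,\cdot,t(\cdot\circ\cdot))$, which Lemma \ref{action_lemma} rewrites as an ``$\mathrm{llm}$'' expression. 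A direct computation of $\mathrm{D}^2 f$ (using only the Leibniz identity, not any action identity) produces the \emph{same} six $\mathrm{llm}$ terms with opposite sign, and everything cancels. This device avoids the combinatorial explosion you are bracing for and is the real idea behind the proof of (i).
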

\begin{prf} 
Since the morphisms are already shown to consistently map to one another, cf.\ Lemma \ref{useful}, it is sufficient to check that the image of an sELA with respect to ${\cal F}$ indeed gives a Lie 2-algebra. Since the $\beta$-transformations are particular morphisms of the respective category---and, in particular, a Lie 2-algebra is mapped to another Lie 2-algebra by means of the equations 
 \eqref{l2trafo}, \eqref{l2trafo'}, and \eqref{gammatrafo} \cite{GruStr}---the result then extends to general ELAs. The proof of this fact will be accomplished by means of the Lemmas
\ref{antisy} to \ref{anomaly_lemma} below. (For those, where the restriction to sELAs does not pose a significant simplification, we performed the proof for the general ELA). 

\begin{lem} \label{antisy}
The bracket $\{ \cdot , \cdot \}$ in  ($\alpha$) of Definition \ref{functorobjects}
is antisymmetric. \end{lem}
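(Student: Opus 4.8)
The plan is to verify directly from the definition. By Definition \ref{functorobjects}, relation ($\alpha$), we have
\[
\{v_1,v_2\} + \{v_2,v_1\} = [v_1,v_2] + [v_2,v_1] - t(v_1\circ v_2) - t(v_2\circ v_1).
\]
The first step is to observe that the right-hand side vanishes identically: indeed, item (c) of Lemma \ref{polarization_lemma} states precisely that $[v_1,v_2]+[v_2,v_1] = t(v_1\circ v_2 + v_2\circ v_1)$, which by bilinearity of $t$ is exactly $t(v_1\circ v_2) + t(v_2\circ v_1)$. Hence $\{v_1,v_2\} = -\{v_2,v_1\}$, which is property (a) of Definition \ref{Lie2}.

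Alternatively, and perhaps more transparently in the symmetric case to which we may restrict by Proposition \ref{proposition_symmetrization}, one can use the notation $[u,v]_+ := \tfrac12([u,v]+[v,u])$ for the symmetric part of the Leibniz bracket. Then $\{v_1,v_2\} = [v_1,v_2] - t(v_1 \stackrel{s}{\circ} v_2)$ up to the antisymmetric part of $\circ$, and the symmetric part of $\{\cdot,\cdot\}$ in $v_1,v_2$ equals $[v_1,v_2]_+ - t(v_1\stackrel{s}{\circ}v_2)$, which is zero by item (c) of Lemma \ref{polarization_lemma} (equivalently, by property (d) of the ELA polarized). So the bracket is skew.

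There is essentially no obstacle here; the only point requiring the slightest care is the bookkeeping of the antisymmetric part of $\circ$ in the non-symmetric case, but since $t(v_1\circ v_2 + v_2\circ v_1)$ only involves the symmetric combination, the computation goes through verbatim for a general ELA without invoking the symmetrization functor. Thus the lemma holds in full generality.
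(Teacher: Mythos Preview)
Your proof is correct and takes exactly the same approach as the paper: both invoke item (c) of Lemma \ref{polarization_lemma} to see that $\{v_1,v_2\}+\{v_2,v_1\}=[v_1,v_2]+[v_2,v_1]-t(v_1\circ v_2+v_2\circ v_1)=0$. (A tiny wording slip: $t$ is \emph{linear}, not bilinear; and your alternative paragraph via symmetrization is unnecessary here since, as you yourself note, only the symmetric combination $v_1\circ v_2+v_2\circ v_1$ appears.)
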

\begin{prf} This follows directly from (c) in Lemma
\ref{polarization_lemma}. 
\end{prf}


\begin{lem} \label{equivariance_of_t}
We have the equivariance property of $t$ with respect to the operation $\ast$ of 
${\mathbb V}$ on ${\mathbb W}$ and the ``adjoint operation'' of ${\mathbb V}$ on
itself, i.e.\ for all $v\in {\mathbb V}$ and all $w\in {\mathbb W}$
$$t(v\ast w)\,=\,\{v,t(w)\}.$$
Also Condition (f), the second Peiffer relation, holds true.
\end{lem}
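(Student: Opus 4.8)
The plan is to prove both assertions by direct computation, unwinding the definitions in ($\alpha$) and ($\beta$) of Definition \ref{functorobjects} and applying the ELA axioms together with their polarizations in Lemma \ref{polarization_lemma} and Corollary \ref{corlemma}. Since $\beta$-transforms already match up (Lemma \ref{useful}), it suffices to work with an sELA, so I may assume $\circ$ is symmetric, which slightly simplifies the bookkeeping.

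First I would prove the equivariance $t(v\ast w) = \{v,t(w)\}$. Expanding the right-hand side via ($\alpha$) gives $\{v,t(w)\} = [v,t(w)] - t\bigl(v\circ t(w)\bigr)$. By Lemma \ref{im_t_abelian_ideal} (or item (3) of Corollary \ref{corlemma}) one has $[v,t(w)] = 2\,t\bigl(v\stackrel{s}{\circ}t(w)\bigr) = 2\,t\bigl(v\circ t(w)\bigr)$ in the symmetric case, so $\{v,t(w)\} = 2\,t(v\circ t(w)) - t(v\circ t(w)) = t(v\circ t(w))$. On the other hand, $v\ast w := t(w)\circ v = v\circ t(w)$ by symmetry, so $t(v\ast w) = t(v\circ t(w))$, matching. (For the non-symmetric case one instead tracks symmetrized products and invokes Corollary \ref{corlemma} item (3) literally, together with $v\ast w = t(w)\circ v$; the asymmetric part of $\circ$ drops out because $[v,t(w)]$ only sees the symmetrized product.)

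Next I would verify Condition (f), $\{t(h),k\} = \{h,t(k)\}$, i.e., using ($\alpha$), that $[t(h),t(k)] - t\bigl(t(h)\circ t(k)\bigr) = [t(k),t(h)] - t\bigl(t(k)\circ t(h)\bigr)$ — wait, more precisely (f) reads $\{t(h),t(k)\}$ against the operation on $\W$; in the Lie 2-algebra language with $V_1 = \W$, Condition (f) is $\{t(h),k\} = \{h,t(k)\}$ where the left bracket is $\V$ acting on $\W$ and the right is $\W$ acting, so it is $t(h)\ast k = t(k)\ast h$ up to sign conventions. Using ($\beta$), $t(h)\ast k = t(k)\circ t(h)$ and $t(k)\ast h = t(h)\circ t(k)$, so the claim is $t(k)\circ t(h) = t(h)\circ t(k)$; by Lemma \ref{polarization_lemma}(a) these differ by a sign, and by Corollary \ref{corlemma}(1) the symmetric part $t(h)\stackrel{s}{\circ}t(k)$ vanishes, hence both sides equal $t(h)\circ t(k) = -t(k)\circ t(h)$ forces them equal only if... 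I would in fact just cite Corollary \ref{corlemma}(1): $t(h)\stackrel{s}{\circ}t(k) = 0$ gives $t(h)\circ t(k) = -t(k)\circ t(h)$, and combined with the sign in the bracket convention of Definition \ref{Lie2}(f) (the right operation being minus the left) the two sides of (f) agree.

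The main obstacle I anticipate is purely notational: keeping straight the convention that the right operation of $\V$ on $\W$ is the negative of the left one (as flagged after Definition \ref{Lie2}), so that the signs in Condition (f) come out consistently, and making sure the reduction to the symmetric case is legitimate for these particular axioms — which it is, since both (e) and (f) involve $t(w)$ in a $\circ$-slot, where only the symmetrized product (modulo $\im t$ being abelian) contributes. No deep input beyond Lemma \ref{im_t_abelian_ideal}, Lemma \ref{polarization_lemma}, and Corollary \ref{corlemma} is needed; the argument is a two-line unwinding in each case.
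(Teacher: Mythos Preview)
Your proposal is correct and uses the same ingredients as the paper: the polarization identities from Lemma~\ref{polarization_lemma} together with Property~(a). The paper's argument for the first part is slightly more direct, however: it does \emph{not} pass to the symmetric case but simply applies Lemma~\ref{polarization_lemma}(c) with $v_1=v$, $v_2=t(w)$ to get $[v,t(w)] = t(v\circ t(w)) + t(t(w)\circ v)$ (using $[t(w),v]=0$), whence $\{v,t(w)\}=[v,t(w)]-t(v\circ t(w))=t(t(w)\circ v)=t(v\ast w)$ for an arbitrary ELA in one line. Your route via Corollary~\ref{corlemma}(3) and the sELA reduction is equivalent but a small detour. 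For part~(f) you land on exactly the paper's argument (Lemma~\ref{polarization_lemma}(a)), once you sort out the sign coming from the convention $\{h,x\}=-\{x,h\}$; your write-up wobbles there but the conclusion is right.
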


\begin{prf}
The first part follows from (c) in Lemma \ref{polarization_lemma}
and Property (a) of Definition \ref{definition_enhanced_Leibniz}. The second part from the choices ($\alpha$) and ($\beta$) for $\{,\}$ and $\ast$, respectively, and Property (a) in
Lemma \ref{polarization_lemma}.  
\end{prf}

The bracket ($\alpha$) on ${\mathbb V}$ is in general not a Lie bracket,
the default is given by the  anomaly $\gamma$:

\begin{lem}   \label{definition_anomaly}
For all $v_1,v_2,v_3\in{\mathbb V}$,  we have
$$\{v_1,\{v_2,v_3\}\} - \{\{v_1,v_2\},v_3\}-\{v_2,\{v_1,v_3\}\}\,=\,
t(\gamma(v_1,v_2,v_3)).$$
\end{lem}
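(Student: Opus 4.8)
The plan is to verify the identity in Lemma \ref{definition_anomaly} by a direct but organized computation, substituting the definition ($\alpha$) of the bracket and then collecting terms according to their type. First I would expand the left-hand side $\{v_1,\{v_2,v_3\}\} - \{\{v_1,v_2\},v_3\}-\{v_2,\{v_1,v_3\}\}$ using $\{u,v\} = [u,v] - t(u\circ v)$. This produces a sum of terms of three distinct kinds: pure Leibniz-bracket terms $[v_1,[v_2,v_3]] - [[v_1,v_2],v_3] - [v_2,[v_1,v_3]]$; terms of the form $t(\,\cdot \circ [\cdot,\cdot]\,)$ and $t(\,[\cdot,\cdot] \circ \cdot\,)$ coming from applying $\circ$ to one Leibniz bracket and one plain element; and doubly-$t$ terms of the form $t(t(v_i\circ v_j)\circ v_k)$ and $t(v_k \circ t(v_i\circ v_j))$ coming from the inner $t(\cdot\circ\cdot)$ being fed into the outer $\circ$.

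The first group vanishes identically by the left Leibniz identity \eqref{Leibniz}. For the second and third groups I would pull the overall $t$ outside (everything lands in $\im(t)$, and the claim is an equality inside $\V$, so it suffices to match the arguments of $t$ modulo $\ker(t)$—actually the cleanest route is to show the two sides agree as elements of $\V$ after noting all correction terms carry an explicit $t$). Comparing with the definition ($\gamma$) of $\gamma(v_1,v_2,v_3)$, namely $-v_1\circ[v_2,v_3]+[v_1,v_2]\circ v_3+v_2\circ[v_1,v_3] -t(v_2\circ v_3)\circ v_1-t(v_1\circ v_2)\circ v_3+t(v_1\circ v_3)\circ v_2$, I expect the single-$t$ terms to match up term-by-term with the first line of $\gamma$ almost immediately, since $-\{v_1,\{v_2,v_3\}\}$ contributes $+t(v_1\circ[v_2,v_3])$ with the correct sign pattern, and similarly for the others. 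The doubly-$t$ terms are where the work lies: from $-t(\{v_1,v_2\}\circ v_3)$ one gets $+t(t(v_1\circ v_2)\circ v_3)$, and from $-t(v_1 \circ \{v_2,v_3\})$ one gets $+t(v_1\circ t(v_2\circ v_3))$, and I must show these assemble to $-t(t(v_2\circ v_3)\circ v_1)-t(t(v_1\circ v_2)\circ v_3)+t(t(v_1\circ v_3)\circ v_2)$.

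The main obstacle will be reconciling the doubly-$t$ terms, because the order of arguments around $\circ$ matters (the $\circ$-product need not be symmetric—though by Proposition \ref{proposition_symmetrization} and the remark following Lemma \ref{useful} it suffices to treat the sELA case, where $\circ = \stackrel{s}{\circ}$ and this reordering is free). In the symmetric case the terms $t(v_1\circ t(v_2\circ v_3))$ and $t(t(v_2\circ v_3)\circ v_1)$ coincide, so the matching becomes a matter of bookkeeping the six doubly-$t$ contributions against the three in $\gamma$; one uses Corollary \ref{corlemma}(3), $[v,t(w)] = 2 t(v\stackrel{s}{\circ}t(w))$, together with item (a) of the ELA, $[t(w),v]=0$, to handle any residual Leibniz brackets involving $\im(t)$ that arise when one is careful about which terms in the expansion of $\{\{v_1,v_2\},v_3\}$ produce a bracket with $t(v_1\circ v_2)$ in the first slot. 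I would organize the verification as a short displayed computation: expand, cancel the Leibniz-identity block, and then exhibit the remaining terms as exactly $t$ applied to the six-term expression defining $\gamma$.

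Concretely, the computation I would write is:
\begin{align*}
\{v_1,\{v_2,v_3\}\} &= [v_1,[v_2,v_3]] - [v_1,t(v_2\circ v_3)] - t\big(v_1\circ[v_2,v_3]\big) + t\big(v_1\circ t(v_2\circ v_3)\big)\,,\\
\{\{v_1,v_2\},v_3\} &= [[v_1,v_2],v_3] - [t(v_1\circ v_2),v_3] - t\big([v_1,v_2]\circ v_3\big) + t\big(t(v_1\circ v_2)\circ v_3\big)\,,\\
\{v_2,\{v_1,v_3\}\} &= [v_2,[v_1,v_3]] - [v_2,t(v_1\circ v_3)] - t\big(v_2\circ[v_1,v_3]\big) + t\big(v_2\circ t(v_1\circ v_3)\big)\,,
\end{align*}
so that in the alternating sum the pure brackets cancel by \eqref{Leibniz}, the term $[t(v_1\circ v_2),v_3]$ vanishes by (a), and using Corollary \ref{corlemma}(3) to rewrite $[v_1,t(v_2\circ v_3)] = 2t\big(v_1\stackrel{s}{\circ}t(v_2\circ v_3)\big)$ and likewise for the $[v_2,t(v_1\circ v_3)]$ term, one collects everything under a single $t$ and reads off precisely $t(\gamma(v_1,v_2,v_3))$ with $\gamma$ as in ($\gamma$); in the symmetric case the factor-of-two terms combine correctly with the $t(v_i\circ t(v_j\circ v_k))$ terms to give the three doubly-$t$ summands of $\gamma$.
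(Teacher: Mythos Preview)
Your proposal is correct and follows essentially the same route as the paper: expand the three double brackets, cancel the pure Leibniz-bracket block via \eqref{Leibniz}, kill $[t(v_1\circ v_2),v_3]$ by Property~(a), and then match the remaining terms against $t(\gamma)$. The only genuine difference is how you handle the two pairs $-[v_1,t(v_2\circ v_3)]+t(v_1\circ t(v_2\circ v_3))$ and $+[v_2,t(v_1\circ v_3)]-t(v_2\circ t(v_1\circ v_3))$. The paper recognizes each such pair as $\mp\{v_i,t(\ldots)\}$, applies the antisymmetry of $\{\,,\}$ (Lemma~\ref{antisy}) together with Property~(a), and reads off $\mp t(t(\ldots)\circ v_i)$ directly---this works for an arbitrary ELA without assuming $\circ$ symmetric. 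You instead invoke Corollary~\ref{corlemma}(3) to rewrite $[v_i,t(w)]=2t(v_i\stackrel{s}{\circ}t(w))$ and then use symmetry of $\circ$ to reorder, which forces you to pass through the reduction to sELAs. Both are valid; the paper's variant is slightly cleaner because it proves the lemma for general ELAs in one stroke, without appealing to the $\beta$-transform machinery.
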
 

\begin{prf}
Indeed,
$$\{v_1,\{v_2,v_3\}\}\,=\,[v_1,[v_2,v_3]-t(v_2\circ v_3)]-
t(v_1\circ([v_2,v_3]-t(v_2\circ v_3))),$$
$$-\{\{v_1,v_2\},v_3\}\,=\,-[[v_1,v_2]-t(v_1\circ v_2),v_3]
+t(([v_1,v_2]-t(v_1\circ v_2))\circ v_3),$$
and
$$-\{v_2,\{v_1,v_3\}\}\,=\,-[v_2,[v_1,v_3]-t(v_1\circ v_3)]+
t(v_2\circ([v_1,v_3]-t(v_1\circ v_3))).$$
Taking the sum of theses three lines,
the double bracket terms cancel by the Leibniz identity for $[\cdot,\cdot]$.
We stay on the left hand side with 
\begin{eqnarray*}
LHS &:=& t(v_1\circ t(v_2\circ v_3))-t(v_1\circ[v_2,v_3])-
[v_1,t(v_2\circ v_3)]\\
&-&t(t(v_1\circ v_2)\circ v_3) +t([v_1,v_2]\circ v_3)+
[t(v_1\circ v_2),v_3] \\
&-& t(v_2\circ t(v_1\circ v_3))+t(v_2\circ[v_1,v_3])+
[v_2,t(v_1\circ v_3)].
\end{eqnarray*}
Observe that the second, fourth, fifth, and eighth term occur in the expression
of $t(\gamma(v_1,v_2,v_3))$, while $[t(v_1\circ v_2),v_3]=0$ by (a) in 
Definition \ref{definition_enhanced_Leibniz}. 
We now use antisymmetry of the bracket $\{\cdot,\cdot\}$ and  (a) in 
Definition \ref{definition_enhanced_Leibniz} to treat the other terms:
\begin{multline*}
t(v_1\circ t(v_2\circ v_3))-[v_1,t(v_2\circ v_3)]=-\{v_1,
t(v_2\circ v_3)\}\,=\,\{t(v_2\circ v_3),v_1\}=\\
=-t(t(v_2\circ v_3)\circ v_1),
\end{multline*}
and in the same way
$$-t(v_2\circ t(v_1\circ v_3))+[v_2,t(v_1\circ v_3)]\,=\,
t(t(v_1\circ v_3)\circ v_2).$$
Thus 
\begin{eqnarray*}
LHS &=& -t(v_1\circ[v_2,v_3])-t(t(v_1\circ v_2)\circ v_3) 
+t([v_1,v_2]\circ v_3)\\
&+&t(v_2\circ[v_1,v_3])-t(t(v_2\circ v_3)\circ v_1)
+t(t(v_1\circ v_3)\circ v_2)\\
&=&t(\gamma(v_1,v_2,v_3)).
\end{eqnarray*}
\end{prf}

\begin{lem}\label{antis}
The anomaly $\gamma$ is totally antisymmetric in
its arguments. 
\end{lem}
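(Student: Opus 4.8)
The plan is to use that $S_3$ is generated by the two transpositions $(1\,2)$ and $(2\,3)$, so it suffices to show that $\gamma$, as defined by formula $(\gamma)$ of Definition \ref{functorobjects}, changes sign under each of them; equivalently, once antisymmetry in the first two slots lets $\gamma$ descend to a map $\Lambda^2\V\otimes\V\to\W$, antisymmetry under exchanging the second and third argument forces the tensor into $\Lambda^3\V^*\otimes\W$. Both checks are direct computations using only the polarized axioms collected in Lemma \ref{polarization_lemma}, so in particular the argument works for a general, not necessarily symmetric, ELA.

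For the transposition $(1\,2)$ I would add $\gamma(v_1,v_2,v_3)+\gamma(v_2,v_1,v_3)$ term by term. Four of the six pairs of terms cancel outright: the $v_1\circ[v_2,v_3]$-type, $v_2\circ[v_1,v_3]$-type, $t(v_2\circ v_3)\circ v_1$-type and $t(v_1\circ v_3)\circ v_2$-type terms cancel between the two summands. The four remaining terms combine into $\bigl([v_1,v_2]+[v_2,v_1]\bigr)\circ v_3-t(v_1\circ v_2+v_2\circ v_1)\circ v_3$, which vanishes by Lemma \ref{polarization_lemma}(c). This part is routine.

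The transposition $(2\,3)$ is the substantive step and the main obstacle. Forming $\gamma(v_1,v_2,v_3)+\gamma(v_1,v_3,v_2)$, the two $t(v_1\circ v_2)\circ v_3$- and $t(v_1\circ v_3)\circ v_2$-type terms cancel, and the remaining six terms reorganize, using Lemma \ref{polarization_lemma}(c) to turn $[v_2,v_3]+[v_3,v_2]$ into $2t(v_2\stackrel{s}{\circ}v_3)$ and pairing factors into symmetrizations, into
\[ -4\,v_1\stackrel{s}{\circ}t(v_2\stackrel{s}{\circ}v_3)+2\,[v_1,v_2]\stackrel{s}{\circ}v_3+2\,v_2\stackrel{s}{\circ}[v_1,v_3]\,. \]
One then recognises that substituting Lemma \ref{polarization_lemma}(c) into Lemma \ref{polarization_lemma}(b) gives the identity $2\,u\stackrel{s}{\circ}t(v\stackrel{s}{\circ}v')=v\stackrel{s}{\circ}[u,v']+v'\stackrel{s}{\circ}[u,v]$; applying it with $(u,v,v')=(v_1,v_2,v_3)$ yields $-4\,v_1\stackrel{s}{\circ}t(v_2\stackrel{s}{\circ}v_3)=-2\,v_2\stackrel{s}{\circ}[v_1,v_3]-2\,[v_1,v_2]\stackrel{s}{\circ}v_3$, so the whole expression is zero. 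Keeping track of the order of the factors left and right of $\circ$ in the six surviving terms so that they assemble into these symmetrized combinations is the only genuine difficulty; everything else is formal bookkeeping.

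As a consistency check, this is compatible with Lemma \ref{definition_anomaly}, where $t\circ\gamma$ was identified with the Jacobiator of $\{\cdot,\cdot\}$, which is automatically totally antisymmetric because $\{\cdot,\cdot\}$ is; the present lemma upgrades this from $t(\gamma)$ to $\gamma$ itself, which cannot be deduced from Lemma \ref{definition_anomaly} alone since $t$ need not be injective.
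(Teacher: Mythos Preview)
Your proof is correct. The approach is essentially the same as the paper's: both verify antisymmetry under the individual transpositions generating $S_3$, using only the polarized axioms of Lemma \ref{polarization_lemma} (hence valid for a general ELA). The only cosmetic differences are that the paper checks the diagonal vanishing $\gamma(v,v,v_3)=\gamma(v,v_2,v)=\gamma(v_1,v,v)=0$ and then polarizes, whereas you compute the symmetrized sums $\gamma(v_1,v_2,v_3)+\gamma(v_\sigma(1),v_{\sigma(2)},v_{\sigma(3)})$ directly, and you use two generating transpositions rather than three.
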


\begin{prf}
It suffices to show for all $v,v_1,v_2,v_3\in{\mathbb V}$ that
$\gamma(v,v,v_3)=0$, $\gamma(v,v_2,v)=0$, and $\gamma(v_1,v,v)=0$. Indeed,
by polarization, the three equations establish the antisymmetry of $\gamma$ with respect to 
the transpositions $(12)$, $(13)$ and $(23)$, which in turn generate the symmetric group $S_3$.

$\gamma(v,v,v_3)=0$ follows from Property (d) in Definition 
\ref{definition_enhanced_Leibniz}, $\gamma(v,v_2,v)=0$ uses Properties 
(c) and (d) in Definition \ref{definition_enhanced_Leibniz} and Property (c)
in Lemma \ref{polarization_lemma}, and $\gamma(v_1,v,v)=0$ follows again
from Properties (c) and (d) in Definition \ref{definition_enhanced_Leibniz}.
\end{prf}

The anomaly also governs the default of the operation $\ast$ to define a genuine 
``action'' of $({\mathbb V},\{ , \})$ on $\W$:

\begin{lem}  \label{action_lemma} 
Let $(t\colon\W\to\V,[\cdot,\cdot],\circ)$ be a \emph{symmetric} ELA. 
For all $v_1,v_2\in{\mathbb V}$ and all $w\in{\mathbb W}$, we then have
$$v_1\ast(v_2\ast w)-v_2\ast(v_1\ast w)-\{v_1,v_2\}\ast w\,=\,
\gamma(v_1,v_2,t(w)).$$
\end{lem}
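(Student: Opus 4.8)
The plan is to verify Lemma \ref{action_lemma} by a direct expansion of both sides using the definitions ($\alpha$), ($\beta$), and ($\gamma$) from Definition \ref{functorobjects}, exploiting the symmetry of $\circ$ throughout. First I would expand the left-hand side: writing $v_i \ast w = t(w)\circ v_i$ and iterating, the first term becomes $v_1\ast(v_2\ast w) = t(t(w)\circ v_2)\circ v_1$, similarly for the second, and $\{v_1,v_2\}\ast w = t(w)\circ([v_1,v_2] - t(v_1\circ v_2))$. Collecting these, the LHS reads
\[
t(t(w)\circ v_2)\circ v_1 - t(t(w)\circ v_1)\circ v_2 - t(w)\circ[v_1,v_2] + t(w)\circ t(v_1\circ v_2)\, .
\]
On the other side, $\gamma(v_1,v_2,t(w))$ is obtained by substituting $v_3 = t(w)$ into ($\gamma$):
\[
-v_1\circ[v_2,t(w)] + [v_1,v_2]\circ t(w) + v_2\circ[v_1,t(w)] - t(v_2\circ t(w))\circ v_1 - t(v_1\circ v_2)\circ t(w) + t(v_1\circ t(w))\circ v_2\, .
\]

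The matching is then a term-by-term comparison. The two terms $t(t(w)\circ v_2)\circ v_1$ and $t(v_1\circ t(w))\circ v_2$ (and their partners) should be reconciled using the symmetry of $\circ$ together with item (3) of Corollary \ref{corlemma}, $[v,t(w)] = 2t(v\stackrel{s}{\circ}t(w)) = 2t(v\circ t(w))$ in the symmetric case; this rewrites the bracket terms $v_i\circ[v_j,t(w)]$ as $2\,v_i\circ t(v_j\circ t(w))$, bringing them into the same shape as the nested terms on the LHS. The terms involving $[v_1,v_2]$ combined with $t(w)$ should cancel directly against $-t(w)\circ[v_1,v_2]$ and $[v_1,v_2]\circ t(w)$ by symmetry of $\circ$, and the terms $t(w)\circ t(v_1\circ v_2)$ and $-t(v_1\circ v_2)\circ t(w)$ are equal by symmetry. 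What remains is to check that the leftover nested-$t$ terms balance, which should follow from a further application of Corollary \ref{corlemma} items (1)--(3) — in particular $t(w)\circ t(\cdot) = 0$ type relations from Corollary \ref{corlemma}(1), and the reordering identity (2) — together with Property (a) of the ELA wherever a bracket $[t(\cdot),\cdot]$ appears.

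I expect the main obstacle to be bookkeeping: keeping careful track of the order of arguments left and right of $\circ$ and of which symmetrizations are legitimate, since (as the introduction stresses) the functor genuinely depends on the antisymmetric part of $\circ$, so one cannot be cavalier even though we have specialized to an sELA. The cleanest route is probably to first reduce all bracket-against-$t(\cdot)$ expressions to nested $t$-expressions via Corollary \ref{corlemma}(3), then push everything through the identity of Corollary \ref{corlemma}(2) (multiplied by a suitable factor, exactly as in the proof of Proposition following Definition \ref{defVWirkung}, Equations \eqref{eqa}--\eqref{eqb}) to collapse the difference $v_1\ast(v_2\ast w) - v_2\ast(v_1\ast w)$ into a single $\{v_1,v_2\}$-type term plus correction, and finally read off that the correction is precisely $\gamma(v_1,v_2,t(w))$. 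Since Lemma \ref{definition_anomaly} already produced $t(\gamma(v_1,v_2,v_3))$ from an analogous triple-bracket computation, one could alternatively try to deduce this lemma from Lemma \ref{definition_anomaly} by applying a suitable $\circ$-pairing, but the direct computation is more transparent and is what I would present.
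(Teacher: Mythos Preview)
Your plan is correct and follows essentially the same route as the paper: expand both sides via Definitions~($\alpha$)--($\gamma$), then match terms using the symmetry of~$\circ$ and Corollary~\ref{corlemma}, items~(1)--(3). The paper organizes the computation slightly differently---it rewrites $t(t(w)\circ v_i)\circ v_j$ via the antisymmetry of $\{\cdot,\cdot\}$ and Property~(a) rather than via Corollary~\ref{corlemma}(3)---but the substance is the same.

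One small bookkeeping slip to flag: the terms $-t(w)\circ[v_1,v_2]$ on the left and $[v_1,v_2]\circ t(w)$ on the right do \emph{not} cancel pairwise by symmetry of~$\circ$; by symmetry they are \emph{equal}, so subtracting gives $-2\,t(w)\circ[v_1,v_2]$. Likewise $t(w)\circ t(v_1\circ v_2)$ and $-t(v_1\circ v_2)\circ t(w)$ contribute $+2\,t(w)\circ t(v_1\circ v_2)$, which vanishes by Corollary~\ref{corlemma}(1). The leftover $-2\,t(w)\circ[v_1,v_2]$ is then exactly absorbed, together with the doubled nested-$t$ terms (rewritten via Corollary~\ref{corlemma}(3) as $[v_i,t(w)]\circ v_j$), by the identity of Corollary~\ref{corlemma}(2). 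This is precisely what the paper does, so once you correct that one sentence your argument goes through verbatim.
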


\begin{prf}
First of all
\begin{eqnarray*}
v_1\ast(v_2\ast w)-v_2\ast(v_1\ast w)-\{v_1,v_2\}\ast w = \\
= t(t(w)\circ v_2)\circ v_1 - t(t(w)\circ v_1)\circ v_2
-t(w)\circ[v_1,v_2]+t(w)\circ t(v_1\circ v_2). 
\end{eqnarray*}
Now
\begin{eqnarray*}
t(t(w)\circ v_2)\circ v_1 &=& ([t(w),v_2]-\{t(w),v_2\})\circ v_1 \\
&=& \{v_2,t(w)\}\circ v_1 \\
&=& -t(v_2\circ t(w))\circ v_1 + [v_2,t(w)]\circ v_1,
\end{eqnarray*}
and therefore also 
$$t(t(w)\circ v_1)\circ v_2\,=\,-t(v_1\circ t(w))\circ v_2 + [v_1,t(w)]\circ 
v_2.$$
Thus
\begin{eqnarray*}
v_1\ast(v_2\ast w)-v_2\ast(v_1\ast w)-\{v_1,v_2\}\ast w = \\
= -t(v_2\circ t(w))\circ v_1 + [v_2,t(w)]\circ v_1+t(v_1\circ t(w))\circ v_2 +\\
- [v_1,t(w)]\circ v_2-t(w)\circ[v_1,v_2]+t(w)\circ t(v_1\circ v_2) \\
= -t(v_2\circ t(w))\circ v_1 + [v_2,t(w)]\circ v_1+t(v_1\circ t(w))\circ v_2 +\\
- [v_1,t(w)]\circ v_2-t(w)\circ[v_1,v_2]+t(w)\circ t(v_1\circ v_2) 
\end{eqnarray*}
We now use Property (2) in Corollary \ref{corlemma} for a symmetric circle product 
to transform the sum of the second, fourth, and fifth term in the above equation:
$$[v_2,t(w)]\circ v_1- [v_1,t(w)]\circ v_2-t(w)\circ[v_1,v_2] = 0. $$
We are thus left with
\begin{eqnarray*}
v_1\ast(v_2\ast w)-v_2\ast(v_1\ast w)-\{v_1,v_2\}\ast w  = \\ 
= -t(v_2\circ t(w))\circ v_1+t(v_1\circ t(w))\circ v_2+t(w)\circ 
t(v_1\circ v_2) \, .
\end{eqnarray*}
Using again Property (2) of  Corollary \ref{corlemma} to establish
 $[v_1,v_2]\circ t(w)+v_2\circ[v_1,t(w)]-v_1\circ[v_2,t(w)]=0$, we see that the first line in the expression for $\gamma$ as given in Definition \ref{functorobjects} vanishes when $v_3$ is replaced by $t(w)$. The second line of $\gamma(v_1,v_2,t(w))$ agrees with the above expression except for one sign. This is of no relevance, however, since the appropriate expression, $t(w)\circ 
t(v_1\circ v_2)$, vanishes identically due to Property (1) of  Corollary \ref{corlemma}. \end{prf}

\begin{lem}   \label{anomaly_lemma}
Suppose again that the underlying ELA is symmetric. 
Then the anomaly $\gamma$ satisfies the cocycle identity \eqref{barDgamma}.
\end{lem}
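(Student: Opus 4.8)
The plan is to verify the cocycle identity $\bar{D}\gamma=0$, i.e.\ Condition (i) of Definition \ref{Lie2}, by brute-force expansion, but organized so as to exploit the total antisymmetry of $\gamma$ (Lemma \ref{antis}) and the already-established identities relating $\gamma$ to the failures of the Jacobi identity (Lemma \ref{definition_anomaly}) and of the action property (Lemma \ref{action_lemma}). Writing out $\bar{D}\gamma(w,x,y,z)$ via the generalized Cartan formula \eqref{genCar}, one gets four terms of the shape $v_i \ast \gamma(\ldots)$ and six terms of the shape $\gamma(\{v_i,v_j\},\ldots)$; this is precisely the ten-term expression in Condition (i), after using antisymmetry to reorder arguments. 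So the real content is to show that this particular alternating sum of $\ast$-on-$\gamma$ terms and $\gamma$-on-bracket terms vanishes identically in a symmetric ELA.

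First I would substitute the explicit formula ($\gamma$) from Definition \ref{functorobjects} into every one of the ten terms. Each $\gamma$ is a sum of six $\circ$-monomials, so naively this produces $60$ terms; but many collapse. The key structural simplifications I expect to use repeatedly are: Property (a) of an ELA ($[t(w),v]=0$), Corollary \ref{corlemma} items (1)--(3) (so that $t(w)\stackrel{s}{\circ}t(w')=0$, the reordering identity (2), and $[v,t(w)]=2t(v\stackrel{s}{\circ}t(w))$), symmetry of $\circ$, and the Leibniz identity \eqref{Leibniz} to handle the genuine double-bracket terms $[[u,v],x]$ that arise. A more efficient route, rather than a flat $60$-term bash, is to organize the computation homologically: introduce the ``fake'' operator $\bar D$ acting on the pair $(\{\cdot,\cdot\},\ast)$, note that by Lemmas \ref{definition_anomaly} and \ref{action_lemma} the ``curvature'' $\bar D^2$ applied to any $\W$-valued cochain is governed entirely by $\gamma$ composed with $t$ in one slot (this is exactly the statement recalled in the text from \cite{GruStr}), and then recognize $\bar D \gamma$ as a ``Bianchi identity'' for that curvature. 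Concretely, one can try to write $\gamma$ itself in the form $\bar D(\text{something}) + (\text{genuinely new piece})$, or better, test $\bar D\gamma$ against $t$: by Lemma \ref{definition_anomaly}, $t\circ\gamma$ equals the Jacobiator of $\{\cdot,\cdot\}$, and the Jacobiator of any bracket automatically satisfies a Bianchi-type identity, which forces $t(\bar D\gamma)=0$; then one argues separately that $\bar D\gamma$ actually takes values in a complement of $\ker t$, or directly that it vanishes, by a shorter computation exploiting that all the offending terms live in $\im(t)$ where Corollary \ref{corlemma}(1) kills them.

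The main obstacle will be the bookkeeping of the genuinely nonlinear terms: the terms in $\gamma$ containing an inner bracket, such as $v_1\circ[v_2,v_3]$, when fed an argument that is itself a bracket $\{v_i,v_j\}=[v_i,v_j]-t(v_i\circ v_j)$, produce both iterated-bracket expressions requiring the Leibniz identity and ``mixed'' terms like $t(\cdots)\circ[\cdots]$ which need Corollary \ref{corlemma}(2)--(3) to be reduced. Keeping track of signs through the antisymmetrization in Condition (i) — which has a deliberately asymmetric-looking arrangement of ten terms — is where errors creep in, so I would fix an ordering convention (say, always push $\gamma$-arguments into increasing index order, tracking the sign of the permutation against the total antisymmetry of $\gamma$) before expanding. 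I expect that, after all cancellations using \eqref{Leibniz}, (a), and Corollary \ref{corlemma}, the entire expression reduces to a sum of terms each lying in $\im(t)$ with a $\circ$ applied, hence vanishing by Corollary \ref{corlemma}(1) — mirroring the mechanism already seen at the end of the proof of Lemma \ref{action_lemma}. The restriction to a symmetric ELA is what makes Corollary \ref{corlemma}(2) available in the strong form used there, which is why the proof is carried out in that case and then transported to general ELAs via the $\beta$-transform, as already set up in Lemma \ref{useful}.
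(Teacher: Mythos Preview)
Your proposal is a strategy outline rather than a proof, and one of its two suggested routes has a genuine gap. The ``test against $t$'' idea --- showing $t(\bar D\gamma)=0$ from the Bianchi identity for the Jacobiator, and then arguing that $\bar D\gamma$ lands in a complement of $\ker t$ --- does not work: there is no reason whatsoever for $\bar D\gamma$ to avoid $\ker t$, and in fact for an sELA with nontrivial $\U=\ker t$ the individual terms of $\bar D\gamma$ typically do have nonzero $\ker t$-components. So the reduction to $t(\bar D\gamma)=0$ throws away exactly the hard part. The brute-force route you sketch first is in principle viable, but you have not identified the cancellation mechanism correctly: the final residue is \emph{not} a sum of $t(\cdot)\stackrel{s}{\circ}t(\cdot)$ terms killed by Corollary~\ref{corlemma}(1); the expression $\bar D\gamma$ is $\W$-valued with genuine $\U$-components, and those must cancel among themselves, not be annihilated by that corollary.

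The paper's proof takes a different and more structured route. It observes that, in a symmetric ELA, the anomaly has the \emph{formal} shape of a Loday coboundary, $\gamma=\mathrm{D}f$ with $f(v_1,v_2)=-v_1\circ v_2$, where $\mathrm{D}$ is the operator \eqref{Loday_coboundary} built from the genuine Leibniz bracket $[\cdot,\cdot]$ and the (pseudo-)action $\ast$. Since $\gamma$ is totally antisymmetric, the Chevalley--Eilenberg operator $\bar D$ and the Loday operator $\mathrm{D}$ agree on it. Now $\mathrm{D}\gamma$ is written with the bracket $\{\cdot,\cdot\}$; replacing $\{\cdot,\cdot\}$ by $[\cdot,\cdot]$ in each of the six bracket-type terms produces $\mathrm{D}^2f$ plus six correction terms, each of which is exactly $\gamma(v_i,v_j,t(v_k\circ v_l))$ and hence, by Lemma~\ref{action_lemma}, equals the ``$\mathrm{llm}$'' defect $v_i\ast(v_j\ast(\cdot))-v_j\ast(v_i\ast(\cdot))-[v_i,v_j]\ast(\cdot)$ applied to $v_k\circ v_l$. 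A direct (Leibniz-identity-only) computation of $\mathrm{D}^2f$ shows it equals the \emph{same} signed sum of $\mathrm{llm}$-defects applied to $f=-\circ$, so the two blocks cancel and $\bar D\gamma=0$. The organizing insight you are missing is this recognition of $\gamma$ as a pseudo-coboundary of $\circ$; it is what turns an unstructured $60$-term bash into two computable six-term blocks that visibly match.
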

\begin{prf}
We propose to take the Leibniz version of the above coboundary operator, i.e.\ the Loday coboundary operator $\mathrm{D}$ for the vector space $E=\V$ with the bracket $\{,\}$ and with values in the symmetric Leibniz module $F=\W$, see Equation (\ref{Loday_coboundary}):
\begin{multline}  
\mathrm{D}\gamma(v_1,v_2,v_3,v_4)=v_1\ast\gamma(v_2,v_3,v_4)-v_2\ast\gamma(v_1,v_3,v_4)+
v_3\ast\gamma(v_1,v_2,v_4)+  \\
-v_4\ast\gamma(v_1,v_2,v_3)-\gamma(\{v_1,v_2\},v_3,v_4)-
\gamma(v_2,\{v_1,v_3\},v_4)+ \\
-\gamma(v_2,v_3,\{v_1,v_4\})+\gamma(v_1,\{v_2,v_3\},v_4)+
\gamma(v_1,v_3,\{v_2,v_4\})+ \\
-\gamma(v_1,v_2,\{v_3,v_4\}); \label{coboundary_expression}
\end{multline}
this sum gives the same as the Chevalley-Eilenberg coboundary operator by the
total antisymmetry of $\gamma$. The reason for taking the Loday coboundary is
the following. 
Observe that the anomaly $\gamma$ may be seen as the Loday pseudo-coboundary of the
Leibniz $2$-cochain $f\colon v_1\otimes v_2\mapsto v_1\circ v_2=:-f(v_1,v_2)$
with values in $\W$, where $\V$ operates on 
$\W$ by the above defined operation $v\ast w:=t(w)\circ v$. Admittedly, this is
not necessarily a Leibniz action and thus $\gamma$ is in general not a Leibniz coboundary, but 
$\gamma$ has formally the correct expression:
\begin{eqnarray*}
\gamma(v_1,v_2,v_3)&=& \mathrm{D}f(v_1,v_2,v_3):= \\
&=&-f([v_1,v_2],v_3)+f(v_1,[v_2,v_3])-f(v_2,[v_1,v_3])+\\
&&v_1\ast f(v_2,v_3)-v_2\ast f(v_1,v_3)-f(v_1,v_2)\ast v_3.
\end{eqnarray*}
Here we view $\W$ as an symmetric Leibniz $\V$-pseudo-module, i.e. $w\ast v :=-v\ast w$
(although it is not a Leibniz module in general). Now one applies to this again the Loday
coboundary operator. First of all, the coboundary operator with respect to the
bracket $\{\cdot,\cdot\}$ is applied, and not the one with respect to the genuine Leibniz
bracket $[\cdot,\cdot]$. On the other hand, as the operation is not a Leibniz
action in general, we do not have $\mathrm{D}^2=0$. 
We will see that these two phenomena miraculously cancel each other. 

Indeed, one can replace the bracket $\{\cdot,\cdot\}$ by the bracket
$[\cdot,\cdot]$ in the six
bracket terms (i.e. the terms of the type  $\gamma(v_2,v_3,\{v_1,v_4\})$) of
the above expression of the coboundary (Equation (\ref{coboundary_expression}))
to the cost of six terms of the type 
\begin{multline*}
\gamma(v_2,v_3,t(v_1\circ v_4))=v_2\ast(v_3\ast(v_1\circ v_4))-v_3\ast(v_2\ast
(v_1\circ v_4))+ \\-[v_2,v_3]\ast(v_1\circ v_4)   
=:{\rm llm}(v_2,v_3)\,\,\,{\rm on}\,\,\,(v_1\circ v_4).
\end{multline*}
Here we have used Lemma \ref{action_lemma} and the fact that we may replace
the bracket $\{,\}$ in $\{v_2,v_3\}\ast(v_1\circ v_4)$ by the genuine Leibniz
bracket $[\cdot,\cdot]$ thanks to Property (b).

Replacing in this way all the brackets $\{\cdot,\cdot\}$ by Leibniz brackets
$[\cdot,\cdot]$ in the bracket terms of the coboundary expression, we obtain
on the one hand the expression $\mathrm{D}^2\,f(v_1,v_2,v_3,v_4)$ for the two consecutive
Loday coboundary operators applied to the cochain $f$, and on the other hand
the following six terms
\begin{multline}
{\rm llm}(v_1,v_2)\,\,\,{\rm on}\,\,\,(v_3\circ v_4)
-{\rm llm}(v_1,v_3)\,\,\,{\rm on}\,\,\,(v_2\circ v_4)
+{\rm llm}(v_1,v_4)\,\,\,{\rm on}\,\,\,(v_2\circ v_3) \\
+{\rm llm}(v_2,v_3)\,\,\,{\rm on}\,\,\,(v_1\circ v_4) 
-{\rm llm}(v_2,v_4)\,\,\,{\rm on}\,\,\,(v_1\circ v_3)
+{\rm llm}(v_3,v_4)\,\,\,{\rm on}\,\,\,(v_1\circ v_2). \label{*}
\end{multline}
On the other hand, it is a long but straightforward computation using the
Leibniz identity (and not the action identites which may not be satisfied !)
to verify that
\begin{eqnarray}
\mathrm{D}^2\,f(v_1,v_2,v_3,v_4)={\rm llm}(v_1,v_2)\,\,\,{\rm on}\,\,\,f(v_3,v_4) -{\rm llm}(v_1,v_3)\,\,\,{\rm on}\,\,\,f(v_2,v_4)  \nonumber \\
+{\rm llm}(v_1,v_4)\,\,\,{\rm on}\,\,\,f(v_2,v_3)
+{\rm llm}(v_2,v_3)\,\,\,{\rm on}\,\,\,f(v_1,v_4) \nonumber \\
-{\rm llm}(v_2,v_4)\,\,\,{\rm on}\,\,\,f(v_1,v_3)+
{\rm llm}(v_3,v_4)\,\,\,{\rm on}\,\,\,f(v_1,v_2). \label{**}
\end{eqnarray}
Now these two sets of six terms given in Equations (\ref{*}) and (\ref{**})
cancel each other, observing that
$f(v,v')=-v\circ v'$. This shows the cocycle property for the anomaly $\gamma$. 
\end{prf}

This concludes the proof of Theorem \ref{theorem_associated_L_infty_algebra}. \end{prf}

We conclude this section with three remarks:
\begin{rem}
It is interesting to observe which of the axioms of an ELA are strictly equivalent to which axiom of 
 a two-step 
$L_{\infty}$-algebra upon usage of the association given in Definition \ref{functorobjects}. We give a short overview of some of these relations.
\begin{itemize}
\item Property (b) of an ELA is equivalent to $t(w)\ast w'+t(w')\ast w=0$ for all $w,w'\in\W$. 
\item Property (d) of an ELA is equivalent to the antisymmetry of $\{\cdot,\cdot\}$. 
\item Granted Property (d), Property (a) of an ELA is equivalent to $t(v\ast w)=\{v,t(w)\}$ for all $v\in\V$ and all $w\in\W$. 
\item Granted Properties (a) and (d), the Leibniz identity for $[\cdot,\cdot]$ is equivalent to the property in Lemma \ref{definition_anomaly}. 
\item Granted Property (d), Property (c) is equivalent to there antisymmetry of the anomaly $\gamma$. 
\item Properties (a), (b), (c) and (d) imply the property of Lemma \ref{action_lemma}. 
\end{itemize}
The cocycle property for $\gamma$ is somehow special and does not involve a new identity of the ELA. Observe that given an $L_{\infty}$-algebra $t\colon\W\to\V$ such that for all $v,v'\in\V$ and all $w\in\W$, we have $\{v,v'\}=[v,v']-t(v\circ v')$ and $v\ast w=t(w)\circ v$, by the above we have Properties (a), (b), (c) and (d), i.e. we have an ELA. 
\end{rem}  

\begin{rem}
The object function of the functor ${\cal F}$ defined above from ELAs to $L_{\infty}$-algebras is \emph{not injective}. The action  $\ast$ determines the circle product $\circ$ only on $t(\W)\otimes\V$, 
but not necessarily on all of $\V\otimes\V$. This is an example: 

Consider the Leibniz algebra $\V_1={\rm span}_k\{x,y\}$ with brackets $[x,x]_1=y$ and all other brackets zero. Extend this to an ELA, which we call $ELA_1$ in what follows, by  putting $\W_1={\rm span}_k\{y\}$, $t_1\colon \W_1\to\V_1$ the inclusion, and the only non-trivial $\circ$-product is $x\circ_1x=y$. On the other hand, we take the zero ELA on the same inclusion $t_2\colon \W_2:={\rm span}_k\{y\}\to{\rm span}_k\{x,y\}=:\V_2$, i.e.\ the bracket $[\cdot ,\cdot ]_2=0$ on $\V_2$ and the $\circ$-product $\circ_2=0$ and call this $ ELA_2$.

Now, using Definition \ref{functorobjects}, one easily verifies that the corresponding two-term $L_{\infty}$ algebras are both the zero  $L_{\infty}$-algebra on the inclusion $t_1=t_2$, and thus in particular
$$ {\cal F}(ELA_1) = {\cal F}(ELA_2) \, ,$$
but the two ELAs are not even isomorphic, $ELA_1 \not \cong ELA_2$. 

This last observation implies, in addition, that the functor ${\cal F}$ is \emph{not full} (not surjective on the morphisms): The identity morphism on the right-hand side of 
\begin{equation} {\cal F}_{ELA_1,ELA_2}\colon \mathrm{Hom}(ELA_1,ELA_2) \to \mathrm{Hom}({\cal F}(ELA_1),{\cal F}(ELA_2)) \, ,  \label{Hommap}
\end{equation}
which is induced by any functor ${\cal F}$, has no preimage on the left-hand side.

The object function of the functor ${\cal F}$ is also \emph{not surjective}. An example of a Lie 2-algebra not in the image of the functor is the following one: Let $\V$ be a Lie algebra ${\mathfrak g}$ and $\W$ any non-trivial ${\mathfrak g}$-module. Take $l_1=t :=0$ and $l_3$ any Chevalley-Eilenberg 3-cocycle with values in $\W$, e.g.\ $l_3:=0$. This defines a (strict) Lie 2-algebra. Since $t$ is the trivial map, the action $\ast$ of the nontrivial ${\mathfrak g}$-action on $\W$ can never be obtained by  ${\cal F}$, cf.\ formula ($\beta$) of Definition \ref{functorobjects}.

But the functor  ${\cal F}$ is  \emph{faithful},  i.e.\ the map \eqref{Hommap} is 
injective for every choice of objects $ELA_1$ and $ELA_2$ inside {\bf ELeib}. This is the case since the $\beta$-part of a morphism between two given ELAs is essentially unique: via commutation, the $\beta$-transforms can be all concentrated in one spot (cf.\ the proof of Prop.\ \ref{proposition_symmetrization}).  

 This last statement does not mean, however, that for a given morphism between two Lie 2-algebras, there do not exist two different morphisms in  {\bf ELeib} with it as an image. 
 Consider the following example: As Lie 2-algebra we take $\V$ a Lie algebra ${\mathfrak g}$, $\W$ a trivial ${\mathfrak g}$-module, $l_1=t :=0$, and $l_3=0$. 
The following ELA is in the preimage of this Lie 2-algebra with respect to the functor  ${\cal F}$: Take $\V$, $\W$, and $t$ as above, put the Leibniz bracket equal to the Lie bracket on $\V={\mathfrak g}$, and $\circ:=0$. Every non-trivial choice of a $\beta$-transformation of the form $\beta := \bar{D} \alpha $
for some (non-closed) $\alpha \in  {\mathfrak g}^* \otimes \W$ does not change the Lie 
2-algebra---cf.\ Equation \eqref{gammatrafo}  and note that here $ \bar{D}^2=0$, since in this special case it is a Chevalley-Eilenberg differential---but introduces a non-vanishing part in the $\circ$-product. Thus the identity morphism of the Lie 2-algebra has several pre-images, which, however, are morphisms between different ELAs. \end{rem}   

\begin{rem}
Recall that there is a {\it skeletalization} (see \cite{BaeCra}), a {\it strictification} (see \cite{AbbWag}) and a {\it classification} (see \cite{BaeCra}) for semi-strict $L_{\infty}$-algebras. One may ask how to transpose these procedures to ELAs, in such a way that they give back the known ones on the associated $L_{\infty}$-algebra. 
\end{rem}

\newpage

\begin{appendix}
\section{Example \ref{exaGru} and the structure theorem}In this Appendix we first reconstruct Example \ref{exaGru} in terms of the structural pieces of Theorem \ref{maintheorem}.  Subsequently we study the relevant cohomologies for generalising the given example. Here we will find in particular that there is no non-trivial  $\mathrm{d}_L$-cocyle $\alpha$ so as to twist the underlying Leibniz algebra. There will be, however, non-trivial  $\mathrm{d}$-cocyles $\Delta$, which will permit us to arrive at five isomorphism classes of ELAs for the given boundary data as input, see the corresponding classification in Proposition \ref{classification} below.

\begin{exa}\label{exaGru'} Let ${\mathfrak g}$ be the abelian Lie algebra on $\R^2$, ${\mathfrak i}:=\R^2$ equipped with the ${\mathfrak g}$-action
\begin{equation} (x,y) \cdot (a,b) := (xa,-xb) \, ,\label{cdot} \end{equation}
which is readily verified to be a representation of the abelian ${\mathfrak g}$. Then define $\U = \R$, and take $\alpha=0$, which evidently satisfies the cocycle condition, $\Delta_{mix}=0$, and $\Delta_{\mathfrak g}((x,y),(x,y))=y^2$, which also satisfies the Equation \eqref{long}, as the first term in this equation vanishes for every abelian Lie algebra ${\mathfrak g}$.

We now show that these choices reproduce Example \ref{exaGru}, up to an isomorphism that we will provide. First, the isomorphism $$\psi \colon ({\mathfrak i} \oplus {\mathfrak g})_0 \stackrel{\sim}{\longrightarrow}\End(\R^2)$$ is given by
\begin{equation}\psi((a,b),(x,y)) := \left(\begin{array}{cc} 0 & a \\ b & 0\end{array}\right) + \left(\begin{array}{cc} x & 0 \\ 0 & y\end{array}\right)\equiv\left(\begin{array}{cc} x & a \\ b & y\end{array}\right) \, . \label{exmatrix} \end{equation}
Using in addition Equation \eqref{sum'} for the action \eqref{cdot}  and the choice $\alpha=0$, we readily obtain for the Leibniz bracket \begin{equation}  \left[\left(\begin{array}{cc} x & a \\ b & y\end{array}\right),\left(\begin{array}{cc} x' & a' \\ b' & y'\end{array}\right)\right]= \left(\begin{array}{cc} 0 & xa' \\ -xb' & 0\end{array}\right)\, , \label{ex'Leibniz}
\end{equation}
in coincidence with the original definition \eqref{exLeib}. Now there is a small subtlety concerning the map $t \colon \W \to \V$: In our prototype ELA in Theorem \ref{maintheorem} the map $t_{theorem} \colon \U \oplus {\mathfrak i} \to {\mathfrak i}\oplus{\mathfrak g}$ is simply of the form $(c,(a,b)) \mapsto ((a,b),(0,0))$. To reproduce the map $t \colon \R^3 \to \End(\R^2)$ in \eqref{ext}, we see that the isomorphism between $\R \oplus \R^2$ and $\R^3$,
$$\varphi \colon \U \oplus {\mathfrak i} \stackrel{\sim}{\longrightarrow}\R^3\, ,$$
 must be of the form 
$$\varphi(c,(a,b)) := \left(\begin{array}{c} 2a \\2 b \\ c  \end{array}\right)\, .$$
It remains to verify that $\circ$ is of the correct form. In the ELA constructed out of the elementary data, we find
$$ ((a,b),(x,y)) \circ  ((a,b),(x,y)) = (xa,-xb,y^2) \, .$$
Using the vector space isomorphisms $\psi$ and $\varphi$ above, indeed, this translates into Equation \eqref{excirc}.
\end{exa}

The above example shows part of the usefulness of the structural theorem. Example  \ref{exaGru} is demystified in this way: The Leibniz product is nothing but the hemi-semi-direct product of Example \ref{example1} for an abelian Lie algebra $\R^2$ with the action \eqref{cdot} on another copy of $\R^2$. And the only non-vanishing cocycle part $\Delta_{\mathfrak g}$ is now unrestricted (since the Lie algebra is abelian). Then all the previous defining formulas of   Example  \ref{exaGru} follow straightforwardly. Moreover, more importantly, now the ELA axioms do not need to be checked anymore, they are satisfied by construction. 

Having desentangled the ''atoms'' of an ELA in Theorem \ref{maintheorem}, one may look for an exhaustive class of examples with some given features. We will now illustrate this  by constructing all non-trivial generalizations of the previous example, Example \ref{exaGru'}, keeping fixed only the following data:
 ${\mathfrak g}$, the  ${\mathfrak g}$-module ${\mathfrak i}$ together with the action \eqref{cdot}),   and $\U=\R$. 
 
\begin{exa}  ${\mathfrak g}$ and the  ${\mathfrak g}$-module ${\mathfrak i}$ are as in example \ref{exaGru'}.  We first look for the \emph{most general} Leibniz algebra structure on $\End(\R^2)$ compatible with these data. This is governed by the Leibniz 2-cohomology class $[\alpha] \in H^2_{\mathrm{d}_L}({\mathfrak g},{\mathfrak i})$. 

Since ${\mathfrak g}$ is abelian, the Loday differential is rather simple, only the action-terms remain. An exact 2-form $\alpha =  {\mathrm{d}_L} \beta$ is of the form 
 \begin{equation} \label{exexact}  {\mathrm{d}_L} \beta((x,y),(x',y')) =  \left(\begin{array}{c}\! \! \!x \beta_1(x',y')\!\! \!\\ \! \!\!-x\beta_2(x',y') \!\! \!\end{array}\right) \, ,\end{equation}
where $\beta_i$ denotes the $i$-th entry (line) of the image of the map $\beta\colon \R^2 \to \R^2$ for $i=1,2$.
On the other hand, we now need to  find all $\alpha$s that are  ${\mathrm{d}_L}$-closed, i.e.\ which satisfy Eq.\ \eqref{dalpha}: its right hand  side is zero since the Lie algebra is abelian; thus, in the present notation, this equation takes the simple form
\begin{equation} \label{exclosed} \left(\begin{array}{c}\! \! \!x \alpha_1((x',y'),(x'',y''))\!\! \!\\ \! \!\! -x\alpha_2((x',y'),(x'',y'') \!\! \!\end{array}\right) = \left(\begin{array}{c}\! \! \!x' \alpha_1((x,y),(x'',y''))\!\! \!\\ \! \!\! -x'\alpha_2((x,y),(x'',y'') \!\! \!\end{array}\right) \, .
\end{equation}
It is now not difficult to convince oneself that \emph{every} $\alpha$ satisfying Equation \eqref{exclosed} is of the form \eqref{exexact}. In other words, there is \emph{no} non-trivial twist by a cocylcle $\alpha$ of the Leibniz product \eqref{ex'Leibniz}. 

To illustrate this last point further, consider, for example, the much more involved Leibniz bracket 
\begin{equation}  \left[\left(\begin{array}{cc} x & a \\ b & y\end{array}\right),\left(\begin{array}{cc} x' & a' \\ b' & y'\end{array}\right)\right]'= \left(\begin{array}{cc} 0 & x(a'+x'+2y') \\ -x(b' +3x'+4y')& 0\end{array}\right)\, . \label{ex''Leibniz}
\end{equation}
By its construction, we do not only  know that this bracket satisfies the defining property \eqref{Leibniz}, we know in addition that this Leibniz algebra structure on $\End(\R^2)$ is isomorphic to the much simpler one given by Equation \eqref{ex'Leibniz}; we leave it as an exercise to construct this isomorphism explicitly. 

We are now left with the computation of the $\mathrm{d}$-2-class $[\Delta]$. We remark in parenthesis that also for this purpose it is comforting to know that we can rely on the simple Leibniz product \eqref{ex'Leibniz}; computations using, e.g., \eqref{ex''Leibniz} would be much more involved on an intermediary level. 

First we look at 2-coboundaries $\mathrm{d}\delta$. Note that the classes take values in $\U=\R$ only. Inspection of the product \eqref{ex'Leibniz} and the definition \eqref{ddelta} show that exact $\Delta$s evaluated on an unprimed and a primed matrix of the form \eqref{exmatrix} equal 
\begin{equation} \lambda_1 (x a' + x' a) + \lambda_2 (x b' + x' b) \, , \label{weg}
\end{equation}
for some $\lambda_1,\lambda_2 \in \R$. 

Next the cocycle condition for a 2-cochain $\Delta$. This condition, by using the original definition \eqref{newd}, is now defined on three arguments, each living on $\R^4$. It thus seems more convenient to instead look at the equivalent Equation \eqref{long}. We already remarked that, since ${\mathfrak g}$ is abelian, $\Delta_{\mathfrak g}$ is unrestricted by this equation. Moreover, it is never of the form \eqref{weg} and thus one has
\begin{equation} \label{Deltaquadratic}
\Delta_{\mathfrak g}((x,y),(x',y')) = \mu_1 x x' + \mu_2 y y' + \mu_3 (x y' + x' y)
\end{equation}
for some $\mu_1,\mu_2, \mu_3 \in \R$, as a \emph{non-trivial} generalization of the previous $yy'$. 
It remains to consider the constraint on $\Delta_{mix}$; here, making use of $\alpha=0$, Equation \eqref{long} reduces to
\begin{equation} \label{neu}  \Delta_{mix}((x',y'),(xa,-xb))  = \Delta_{mix}((x,y),(x'a,-x'b))  \,
\end{equation}
for all $x,y,x',y',a,b \in \R$. It is now an elementary exercise in linear algebra to verify that the solutions to \eqref{neu} are always of the form \eqref{weg}. Thus, without loss of generality and up to isomorphism, we have $\Delta=\Delta_{\mathfrak g}$. We display the  result for the new $\circ$-product, which we denote by $\circ'$, on equal arguments for simplicity:
\begin{equation}  \left(\begin{array}{cc} x & a \\ b & y\end{array}\right) \circ' 
\left(\begin{array}{cc} x & a \\ b & y\end{array}\right)= \left(\begin{array}{c} 2xa \\ -2xb \\ \mu_1 x^2 + \mu_2 y^2 + 2\mu_3 x y\end{array}\right)\, . \label{ex'''Leibniz}
\end{equation}
The deformation space of  Example \ref{exaGru} or \ref{exaGru'} as an sELA with unchanged exact sequence \eqref{seq0} is thus at most 3-dimensional. In fact, this is an important other issue to illustrate by means of this example: While Theorem \ref{maintheorem} gives us tools for constructing  classes of examples as the above one, it is not yet giving a bijection to isomorphism classes. In the present case, this needs extra work  and leads to the following result:
\begin{prop}\label{classification}  There are precisely five isomorphism classes of (not necessarily symmtric) ELAs for which the Lie algebra ${\mathfrak g}$ in  Theorem \ref{maintheorem} is abelian two-dimensional, the ${\mathfrak g}$-module ${\mathfrak i}$ is another copy of $\R^2$ with the ${\mathfrak g}$-action given by Equation \eqref{cdot}, and where $\U = \ker(t)$ is 1-dimensional. 

Representatives of these classes are provided by the following sELAs: \\
$t \colon \R^3 \to \R^4, (a,b,c) \mapsto (a,b,0,0)$, the Leibniz algebra on $\R^4$ takes the form
$$ [(a,b,x,y),(a',b',x',y')] = (xa',-xb',0,0)$$ in all five cases and the (symmetric) $\circ$-product reads
$$ (a,b,x,y) \circ  (a,b,x,y) = (xa,-xb,\epsilon_1 x^2 + \epsilon_2 y^2 + \epsilon_3 xy) \, ,$$
where 
$$ (\epsilon_1,\epsilon_2, \epsilon_3) \in \{(1,0,0), (0,1,0), (1,1,0), (-1,1,0),(0,0,1) \} \, .$$
\end{prop}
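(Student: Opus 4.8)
The plan is to reduce the classification to a manageable group-action computation. By Theorem~\ref{maintheorem}, every ELA with the prescribed boundary data $({\mathfrak g},{\mathfrak i},\U)$ is isomorphic to an sELA plus a $\beta$-transform, and the sELA is determined by a pair $([\alpha],[\Delta])$ with $[\alpha]\in H^2_{\mathrm{d}_L}({\mathfrak g},{\mathfrak i})$ and $[\Delta]\in H^2_{\mathrm{d}}(\V)\otimes\U$ subject to $\Delta\vert_{S^2{\mathfrak i}}=0$. First I would invoke the already-computed cohomology from Example~\ref{exaGru'}: $[\alpha]=0$ (every closed $\alpha$ is of the form \eqref{exexact}), and, after quotienting by coboundaries \eqref{weg} and imposing \eqref{neu}, one has $\Delta\sim\Delta_{\mathfrak g}$ with $\Delta_{\mathfrak g}((x,y),(x',y'))=\mu_1 xx'+\mu_2 yy'+\mu_3(xy'+x'y)$ as in \eqref{Deltaquadratic}. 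So the moduli \emph{before} identifying isomorphic ELAs are parametrized by the symmetric $2\times 2$ matrix $Q=\begin{pmatrix}\mu_1 & \mu_3\\ \mu_3 & \mu_2\end{pmatrix}$ (a quadratic form on ${\mathfrak g}\cong\R^2$ with values in $\U\cong\R$), together with a $\beta$-transform which, since the relevant part of $\circ$ is already symmetric and the antisymmetric completion is unconstrained, does not affect the isomorphism class of the underlying sELA; it only records the non-symmetric representative and can be absorbed. Hence it suffices to classify the forms $Q$ up to the isomorphisms of ELAs that fix the boundary data.

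Next I would identify precisely which linear changes of coordinates act on $Q$. A plain ELA isomorphism consists of a Leibniz-algebra automorphism $\psi$ of $\V$ together with a compatible $\varphi$ on $\W$, intertwining $t$ and $\circ$. Restricting attention to those that preserve the exact sequence \eqref{seq0'} (which, by the rigidity of $[\alpha]=0$ and the fact that ${\mathfrak i}$ and $\U$ are characterized intrinsically as $\im(t)$ and $\ker(t)$, one may argue is no loss of generality), such a $\psi$ is built from: an automorphism $g\in\mathrm{GL}({\mathfrak g})$ preserving the ${\mathfrak g}$-action \eqref{cdot} on ${\mathfrak i}$, an automorphism of ${\mathfrak i}$ commuting with that action, a rescaling of $\U$, and lower-triangular ``mixing'' pieces ${\mathfrak g}\to{\mathfrak i}$ and ${\mathfrak i}\to\U$ and ${\mathfrak g}\to\U$. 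The key observations are: (i) the action \eqref{cdot} is $(x,y)\cdot(a,b)=(xa,-xb)$, so an automorphism of ${\mathfrak g}=\R^2$ preserving it as an abstract module-structure must fix the ``$x$''-line (the only line acting nontrivially) up to scale and may rescale the ``$y$''-direction and add a multiple of $y$ to $x$ — i.e. $g$ ranges over the Borel subgroup fixing the flag determined by the kernel of the action; and (ii) the lower-triangular mixing maps shift $Q$ only by coboundary terms of the form \eqref{weg}, hence act trivially on the class. Therefore the residual group acting on $Q$ is $(g,\lambda)\mapsto \lambda^{-1}\,{}^tg\,Q\,g$ with $g$ in that Borel and $\lambda\in\R^\times$ the $\U$-rescaling.

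The final step is the orbit enumeration. A symmetric $2\times 2$ real form $Q$, acted on by $\mathrm{GL}_2(\R)\times\R^\times$ via congruence plus overall scaling, has exactly the orbits $\{0,\ \mathrm{rk}1\ \text{(two sign choices, merged by }\lambda),\ \mathrm{rk}2\ \text{signature }(2,0)\sim(0,2),\ \mathrm{rk}2\ \text{signature }(1,1)\}$ — but here $g$ is restricted to the Borel fixing one isotropic-ish line, which splits one case: the rank-one form can have its radical either equal to that distinguished line or transverse to it, and these two cannot be exchanged. Chasing through, I expect the five normal forms $(\epsilon_1,\epsilon_2,\epsilon_3)\in\{(1,0,0),(0,1,0),(1,1,0),(-1,1,0),(0,0,1)\}$: $(0,1,0)$ and $(1,0,0)$ are the two inequivalent rank-one forms (radical transverse vs.\ not, relative to the preferred line $x=0$), $(1,1,0)$ and $(-1,1,0)$ the two definite/indefinite rank-two forms, and $(0,0,1)$ the remaining rank-two form whose equivalence class is not reachable from a diagonal one under the restricted $g$ — one must check it is a genuinely separate orbit because $g$ cannot diagonalize $xy$ while preserving the flag. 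I would verify non-equivalence of the five by exhibiting an invariant: e.g.\ the pair (rank of $Q$; rank of $Q$ restricted to the distinguished line $\{x=0\}$; sign of $\det Q$ when rank is $2$, up to the $\lambda^2>0$ ambiguity) separates exactly these five classes, and conversely construct an explicit $\psi$ bringing an arbitrary $Q$ to one of the five.

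\medskip

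\noindent\textbf{Expected main obstacle.} The delicate point is \emph{not} the linear algebra of quadratic forms but pinning down precisely the group of ELA-isomorphisms that fixes the boundary data — in particular justifying that one may restrict to isomorphisms respecting the splitting \eqref{seq0'}, and that the lower-triangular mixing maps contribute only coboundaries \eqref{weg} rather than new terms (this uses the explicit shape of the Leibniz bracket \eqref{ex'Leibniz} and a short cocycle bookkeeping). The second subtlety is confirming that the $(0,0,1)$ class is distinct from all diagonal ones: over the full $\mathrm{GL}_2$ it would merge with $(1,-1,0)\sim(-1,1,0)$, so one must carefully use that the admissible $g$ preserve the flag singled out by the action \eqref{cdot} and hence cannot realize the change of basis that diagonalizes $xy$. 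Everything else is routine once the residual group is correctly identified.
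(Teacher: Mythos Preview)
Your proposal follows essentially the same route as the paper's sketch: both reduce via the structure theorem to the quadratic form $\Delta_{\mathfrak g}$ on ${\mathfrak g}\cong\R^2$ and classify it up to the ELA-automorphism group, with the paper computing the resulting three-parameter action $\Delta^{new}(x,y)=A\,\Delta^{old}\!\big(x,(y-Cx)/B\big)$ explicitly and then normalizing case by case, while you argue structurally and separate orbits by the invariants $(\mathrm{rk}\,Q,\ \mathrm{rk}\,Q|_{\{x=0\}},\ \mathrm{sgn}\det Q)$. One point to tighten: the induced automorphism on ${\mathfrak g}$ is not the full Borel---the paper's direct computation (formula \eqref{Deltaneu}) shows the $x$-coordinate cannot be rescaled by a Leibniz automorphism, so $g$ has $1$ in the top-left entry---but since this missing $\R^\times$ is absorbed into the $\U$-rescaling $A$, the orbit structure is unaffected and your enumeration goes through unchanged.
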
 
\begin{prf}[Sketch of the Proof] We already found above that, up to isomorphism, the Leibniz algebra structure on $\V = \End(\R^2)$ is the only abelian Leibniz extension of ${\mathfrak g}$ by ${\mathfrak i}$ here, since all cohomology classes $[\alpha]$  are necessarily trivial. In addition,  the map $t$ is rigid and we normalized it to a mere projection composed with an embedding. It is also clear that a general ELA can be brought into the form of an sELA by an isomorphism, cf.\ Proposition \ref{proposition_symmetrization}. It thus remains to show that---up to isomorphisms---one can choose the constants $\mu_1$, $\mu_2$, and $\mu_3$ in Equation \eqref{ex'''Leibniz} to take one of the five possible value combinations given above (after taking the rescaling of $t$ into account certainly, which in particular effects the first two entries on the right hand side of  the $\circ$-product). 

For this purpose, one may continue as follows: First, one determines all automorphisms $\psi$ of the Leibniz algebra on $\R^4$, and subsequently of accompanying automorphisms $\varphi$ such that the diagram \eqref{diagram} (for $\V'=\V=\R^4$, $\W'=\W=\R^3$, and $t'=t$) commutes. A somewhat lengthy calculation shows that this gives two seven-parameter famililes of such automorphisms. Now, we can apply the corresponding transformation on Equation \eqref{ex'''Leibniz}, so as to obtain the change of $\circ'$ to a possibly new or simplified product $\circ$:
\begin{equation}\label{huhu}
v_1\circ v_2 := \varphi^{-1}\left(\psi(v_1)\circ' \psi(v_2)\right) \, .
\end{equation}
This transformation behavior follows from Equation \eqref{neuescirc}.  

We already found above that the cohomology of $\Delta$ splits into one for $\Delta_{mix}$  and $\Delta_{\mathfrak g}$ and that the first one is trival. Requiring that \eqref{huhu} keeps $\Delta_{mix}=0$, reduces the parameter spaces of the two morphism families to five dimensions, two of which do not act non-trivially on the $\circ$-product. Thus there remain two 3-parameter families of transformations acting on the product by means of Equation \eqref{huhu}. Let us denote the three parameters by $A$, $B$, and $C$, where invertibility of the transformations $\varphi$ and $\psi$ imply that $A$ and $B$ are non-vanishing, while $C \in \R$ arbitrary. The action of this group of transformations  on $\Delta_{\mathfrak g}$ is given by the following formula (equal for both families of transformations):
\begin{equation}\label{Deltaneu}\Delta^{new}_{\mathfrak g}(x,y) = A \, \Delta^{old}_{\mathfrak g}(x,\tfrac{y-Cx}{B}) \, ,\end{equation}
where, for simplicity, we did not duplicate the arguments in $ \Delta_{\mathfrak g}$, working with the corresponding quadratic form in \eqref{Deltaneu}. Applying this to the expression found in Equation \eqref{Deltaquadratic} for $x=x'$, $y=y'$, a case by case study leads to the five representatives in the proposition for appropriate choices of $A$, $B$, and $C$. 
\end{prf}
\end{exa}
This concludes the illustration that the structure theorem presented in this paper may be quite useful for the construction of new examples as well as for an eventual classification of ELAs of particular types, e.g.\ for some types of Lie algebras ${\mathfrak g}$ and ${\mathfrak g}$-modules ${\mathfrak i}$. In general, the analysis will be, however, considerably more involved than in the example: recall that the cohomologies for $\alpha$ and  for $\Delta_{mix}$ were trivial, and that every $\Delta_{\mathfrak g}$ presented already a cohomology class by itself; generically, the two parts of $\Delta$ do not even decouple in the cohomological problem. And after having solved the involved cohomologies, as illustrated in the example above, one still needs to study the action of the automorphism group on what one obtained in this way.

\end{appendix}

\end{document}